\documentclass[a4paper]{amsart}

\usepackage[all]{xy}
\usepackage{amsmath}
\usepackage{amssymb}
\usepackage{amsthm}
\usepackage{latexsym}
\usepackage{enumerate}
\usepackage{prettyref}
\usepackage{hyperref}
\usepackage{bm}
%\usepackage{tikz}
%\usetikzlibrary{calc}
\usepackage{mathrsfs}

\newtheorem{theorem}{Theorem}[section]
\newrefformat{thm}{\hyperref[{#1}]{Theorem~\ref*{#1}}}
\newtheorem{definition}[theorem]{Definition}
\newrefformat{def}{\hyperref[{#1}]{Definition~\ref*{#1}}}
\newtheorem{lemma}[theorem]{Lemma}
\newrefformat{lem}{\hyperref[{#1}]{Lemma~\ref*{#1}}}
\newtheorem{proposition}[theorem]{Proposition}
\newrefformat{prop}{\hyperref[{#1}]{Proposition~\ref*{#1}}}
\newtheorem{corollary}[theorem]{Corollary}
\newrefformat{cor}{\hyperref[{#1}]{Corollary~\ref*{#1}}}
\newtheorem{remark}[theorem]{Remark}
\newrefformat{rem}{\hyperref[{#1}]{Remark~\ref*{#1}}}
{%\theorembodyfont{\upshape}
\newtheorem{examplecore}[theorem]{Example}}
\newrefformat{ex}{\hyperref[{#1}]{Example~\ref*{#1}}}

\newenvironment{example}{\begin{examplecore}}{\hspace*{\fill}
$\square$\par\vspace{.1cm}\end{examplecore}}

\newcommand{\op}{\operatorname}
\newcommand{\et}{\mathrm{\acute{e}t}}

\begin{document}

\title{On stably trivial spin torsors over low-dimensional schemes}  

\author{Matthias Wendt}

\date{2017}

\address{Matthias Wendt} 
\email{m.wendt.c@gmail.com}
\thanks{The project was initiated during a pleasant stay at Wuppertal financed by the DFG SPP 1786 ``Homotopy theory and algebraic geometry''; and it was finished during a pleasant stay at Institut Mittag-Leffler in the special program ``Algebro-geometric and homotopical methods''.}

\subjclass[2010]{14F42,19G05 (20G35)}
\keywords{quadratic forms, stably hyperbolic, $\mathbb{A}^1$-homotopy, obstruction theory}

\begin{abstract}
The paper discusses stably trivial torsors for spin and orthogonal groups over smooth affine schemes over infinite perfect fields of characteristic unequal to $2$. We give a complete description of all the invariants relevant for the classification of such objects over schemes of dimension at most $3$, along with many examples. The results are based on the $\mathbb{A}^1$-representability theorem for torsors and transfer of known computations of $\mathbb{A}^1$-homotopy sheaves along the sporadic isomorphisms to spin groups.
\end{abstract}

\maketitle
\setcounter{tocdepth}{1}
\tableofcontents

\section{Introduction}

The main point of the paper is to study the classification of stably trivial orthogonal (and spin) bundles over low-dimensional schemes. This is essentially the question how large the difference is between the Grothendieck--Witt ring and the actual isometry classification of quadratic forms over rings. Over fields, Witt's cancellation theorem tells us that the monoid of isometry classes is cancellative and therefore embeds into its group completion. Over commutative rings of higher dimension, this is no longer true and the present investigation concerns exactly this failure of cancellation for quadratic forms. While most of the contemporary work on quadratic forms is related to stable theories (hermitian  K-theory or higher Grothendieck--Witt groups), unstable questions related to the actual isometry classification seem to have mostly been neglected (except of course over rings of integers in local or global fields). 

The point of the paper is to show that homotopical methods can also be applied to  study the isometry classification of quadratic forms over schemes: if we restrict our attention to smooth affine schemes over infinite perfect fields of characteristic $\neq 2$, then the $\mathbb{A}^1$-representability result of \cite{gbundles2} allows to translate questions concerning classification of rationally hyperbolic quadratic forms into questions of obstruction-theoretic classification of morphisms into classifying spaces of orthogonal groups. Knowledge of $\mathbb{A}^1$-homotopy groups of the relevant classifying spaces can then be translated to classification results for rationally hyperbolic quadratic forms (and in particular stably trivial orthogonal bundles). 

Over smooth affine schemes of dimension $\leq 3$, where only the knowledge of $\mathbb{A}^1$-homotopy sheaves up to $\bm{\pi}_3^{\mathbb{A}^1}$ is required, we can actually give a complete description of the relevant invariants entering the classification of rationally hyperbolic forms. This is done via the classical sporadic isomorphisms for low-dimensional orthogonal resp. spin groups and the known $\mathbb{A}^1$-homotopy computations for the related groups. Surprisingly the identification of the orthogonal stabilization maps under the sporadic isomorphisms doesn't seem to be easy to find in the literature, necessitating a slightly extended discussion of the sporadic isomorphisms. The results can be used to produce many examples of stably trivial spin torsors over various varieties, cf. Sections~\ref{sec:dim3}. We also discuss the relation between classification of stably trivial spin torsors and quadratic bundles, which leads to a number of explicit examples of stably trivial quadratic bundles, cf. Section~\ref{sec:quad}. The most concise formulation of the combined results of the paper is the following: 

\begin{theorem}
Let $k$ be an infinite perfect field of characteristic $\neq 2$, let $X=\op{Spec}A$ be a smooth affine scheme of dimension $\leq 3$ over $k$ and let $(\mathscr{P},\phi)$ be a rationally hyperbolic quadratic form over $A$ of rank $n$ admitting a lift of structure group to $\op{Spin}(n)$. The only stable invariant of $(\mathscr{P},\phi)$ is the second Chern class. 
\end{theorem}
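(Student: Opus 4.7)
The plan is to invoke the $\mathbb{A}^1$-representability theorem of \cite{gbundles2}, which puts isomorphism classes of $\op{Spin}(n)$-torsors on $X$ in bijection with $[X,B\op{Spin}(n)]_{\mathbb{A}^1}$ and identifies rational hyperbolicity with the class being killed under the stabilization map to $[X,B\op{Spin}]_{\mathbb{A}^1}$. Since $\dim X \leq 3$, obstruction theory along the $\mathbb{A}^1$-Postnikov tower of (the homotopy fiber of) this stabilization collapses to contributions from the sheaves $\bm{\pi}_i^{\mathbb{A}^1}(B\op{Spin}(n))$ for $i \leq 3$, equivalently $\bm{\pi}_i^{\mathbb{A}^1}(\op{Spin}(n))$ for $i\leq 2$, together with the effect of stabilization on them.

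To compute these sheaves and maps, for $n\leq 6$ I would transfer the problem along the sporadic isomorphisms $\op{Spin}(3)\cong\op{SL}_2$, $\op{Spin}(4)\cong\op{SL}_2\times\op{SL}_2$, $\op{Spin}(5)\cong\op{Sp}_4$, $\op{Spin}(6)\cong\op{SL}_4$ to the corresponding classical-group classifying spaces, where Morel's identification $\bm{\pi}_1^{\mathbb{A}^1}(\op{SL}_2)\cong\bm{K}_2^{\op{MW}}$ and the Asok--Fasel-type calculations for $\op{Sp}_4$ and $\op{SL}_4$ give the $\bm{\pi}_1^{\mathbb{A}^1}$ and $\bm{\pi}_2^{\mathbb{A}^1}$ sheaves directly; for $n\geq 7$ one is in the metastable range and these sheaves agree with their stable counterparts for $\op{Spin}$. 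The main obstacle, as the introduction itself flags, is the identification of the orthogonal stabilization maps $\op{Spin}(n)\hookrightarrow\op{Spin}(n+1)$ under the sporadic isomorphisms: this cannot be read off from Lie-theoretic data alone and must be pinned down by tracking the vector and (half-)spin representations explicitly through each isomorphism.

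Once the sheaves and the stabilization maps are understood, the set of rationally hyperbolic classes in $[X,B\op{Spin}(n)]_{\mathbb{A}^1}$ is controlled by a single cohomology group coming from the Postnikov obstruction tower. The spin lift kills the mod-$2$ invariants $w_1$ and $w_2$, while rational hyperbolicity kills the lower-degree stable contributions, so the sole surviving term lives in $H^2(X,\bm{K}_2^{\op{M}})=\op{CH}^2(X)$. The final step is to match this obstruction class with the second Chern class of the underlying vector bundle; this goes through the standard $\mathbb{A}^1$-homotopical description of $c_2$ via the characteristic map from $B\op{Spin}$ (or $B\op{SL}$) to the Eilenberg--MacLane space $K(\bm{K}_2^{\op{M}},2)$. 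Nontriviality of $c_2$ as a genuine invariant on the class of forms considered, and hence the sharpness of the statement, is witnessed by the explicit constructions of Sections~\ref{sec:dim3} and~\ref{sec:quad}.
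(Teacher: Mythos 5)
Your proposal follows the paper's strategy: affine $\mathbb{A}^1$-representability, Postnikov obstruction theory in cohomological dimension $\leq 3$, sporadic isomorphisms for $n\leq 6$, and homotopy stability for $n\geq 7$. Two points, however, need correction before the argument closes.

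First, you misdescribe what the representability theorem supplies. Rational hyperbolicity is not the condition of ``being killed under the stabilization map.'' Rational hyperbolicity of $(\mathscr{P},\phi)$ is what makes the associated $\op{SO}(n)$-torsor \emph{Nisnevich-locally trivial} (a torsor with a rational section), and this is precisely the hypothesis needed so that the affine representability theorem identifies its isometry class with an element of $[X,{\op{B}}_{\op{Nis}}\op{SO}(n)]_{\mathbb{A}^1}$. Being killed by stabilization is the stronger notion of \emph{stable triviality}. The theorem concerns all rationally hyperbolic forms with a spin lift, stably trivial or not; the conclusion is about which of their invariants survive stabilization, not a reformulation of the hypothesis.

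Second, and more seriously, you do not account for the degree-$3$ lifting class. Since $X$ has Nisnevich cohomological dimension $\leq 3$, once the spin lift disposes of $\bm{\pi}_0$ and $\bm{\pi}_1$ there remain in general \emph{two} lifting classes, one in $\op{H}^2_{\op{Nis}}(X,\bm{\pi}^{\mathbb{A}^1}_2{\op{B}}\op{Spin}(n))$ and one in $\op{H}^3_{\op{Nis}}(X,\bm{\pi}^{\mathbb{A}^1}_3{\op{B}}\op{Spin}(n))$. The crux of the theorem is that the degree-$3$ class is stably invisible, and the reason is not ``rational hyperbolicity killing lower-degree contributions'': it is the computation $\bm{\pi}^{\mathbb{A}^1}_3{\op{B}}_{\op{Nis}}\op{Spin}(n)\cong\mathbf{K}^{\op{ind}}_3$ for $n\geq 7$ together with the specific vanishing $\op{H}^3_{\op{Nis}}(X,\mathbf{K}^{\op{ind}}_3)=0$. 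Without this ingredient the assertion that the sole surviving term is $c_2\in\op{CH}^2(X)$ simply does not follow; one would instead have to check separately, for each sporadic group, that its degree-$3$ invariant dies under the stabilization map. The paper does exactly that via the projections to $\mathbf{K}^{\op{ind}}_3$ (e.g.\ $\mathbf{K}^{\op{Q}}_3\to\mathbf{K}^{\op{ind}}_3$ for $\op{Spin}(6)$, the forgetful $\mathbf{KSp}_3\to\mathbf{K}^{\op{Q}}_3$ for $\op{Spin}(5)$, and so on). You should add this ingredient explicitly; it is the essential mechanism behind the ``only'' in the statement.
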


Here the second Chern class of a quadratic form  can be defined by stabilizing $(\mathscr{P},\phi)$ by adding hyperbolic planes to get a quadratic form of rank $\geq 7$ and then taking the second Chern class of the underlying projective bundle. 

While the above result identifies a single stable invariant, there are various other unstable invariants leading to various sources of examples of stably trivial quadratic bundles resp.  spin torsors. For the group $\op{SO}(3)$, stabilization induces multiplication by $2$ on the second Chern class which allows for examples related to $2$-torsion in Chow groups; for $\op{SO}(4)$ there are actually two Chern classes appearing in $\op{CH}^2$ leading to a great number of non-trivial stably trivial torsors. Since some of the low-dimensional spin groups are symplectic, there are examples of stably trivial spin torsors related to orientation information in $\widetilde{\op{CH}}^2$. Finally, there are various invariants arising from non-trivial $\bm{\pi}^{\mathbb{A}^1}_3$ which all become trivial after stabilization because $\bm{\pi}^{\mathbb{A}^1}_3{\op{B}}\op{Spin}(\infty)\cong\mathbf{K}^{\op{ind}}_3$ which is invisible in the torsor classification. These invariants give rise to interesting examples of stably trivial quadratic forms in a variety of situations. Since the examples arise via the sporadic isomorphisms, they can all be constructed very explicitly. 

Finally, it should be pointed out here that stably trivial quadratic forms refers here to Nisnevich-locally trivial torsors with structure group $\op{O}(n)$ which become trivial upon stabilization to $\op{O}(\infty)$. This is different from the usual notion of stably hyperbolic quadratic forms, which are quadratic forms which upon adding a hyperbolic form associated to a projective module become isomorphic to the hyperbolic form associated to a projective module. A theorem proved by Ojanguren and Pardon, cf. \cite[Section VIII.2]{knus}, states that over an integral affine scheme of dimension $\leq 3$, any rationally hyperbolic quadratic form is stably hyperbolic. In particular, all of the examples discussed in this paper eventually arise via the hyperbolic functor from vector bundles. What we provide here is the isometry classification of such things when the underlying scheme is smooth affine over an infinite field of characteristic unequal to $2$. The isometry classification actually allows to give examples of non-hyperbolic (stably hyperbolic by the above) forms of rank 4, cf. Example~\ref{ex:nonhyp}.

\subsection*{Conventions}
In this paper, $k$ always is an infinite perfect field of characteristic unequal to $2$. We consider smooth affine schemes $X=\op{Spec}A$ over $k$ and are interested in classification results for quadratic forms over the ring $A$. 

\section{Preliminaries on quadratic forms}
\label{sec:prelims}

This section provides a short recollection on the relevant facts concerning quadratic forms. Most of what is recalled below are standard definitions which can be found in any textbook, cf. e.g. \cite{knus}. 

\begin{definition}
Let $k$ be a field of characteristic unequal to $2$. 
\begin{itemize}
\item A \emph{quadratic form} over a commutative $k$-algebra $A$ is given by a finitely generated projective $A$-module $\mathscr{P}$ together with a map $\phi:\mathscr{P}\to A$ such that for each $a\in A$ and $x\in\mathscr{P}$ we have $\phi(ax)=a^2\phi(x)$ and $B_\phi(x,y)=\phi(x+y)-\phi(x)-\phi(y)$ is a symmetric bilinear form $B_\phi\in \op{Sym}^2(\mathscr{P}^\vee)$. 
\item 
The \emph{rank} of the quadratic form is defined to be the rank of the underlying projective module $\mathscr{P}$. 
\item A quadratic form $(\mathscr{P},\phi)$ is \emph{non-singular} if the morphism $\mathscr{P}\to\mathscr{P}^\vee:x\mapsto B_\phi(x,-)$ is an isomorphism.
\item An element $x\in\mathscr{P}$ is called \emph{isotropic} if $\phi(x)=0$. 
\item A morphism $f:(\mathscr{P}_1,\phi_1)\to(\mathscr{P}_2,\phi_2)$ of quadratic forms is an $A$-linear map $f:\mathscr{P}_1\to\mathscr{P}_2$ such that $\phi_2(f(x))=\phi_1(x)$ for all $x\in \mathscr{P}_1$. An isomorphism of quadratic forms is also called \emph{isometry}. The automorphism group of a quadratic form is called the \emph{orthogonal group} of the quadratic form.
\item Given two quadratic forms $(\mathscr{P}_1,\phi_1)$ and $(\mathscr{P}_2,\phi_2)$, there is a quadratic form
\[
(\mathscr{P}_1,\phi_1)\perp (\mathscr{P}_2,\phi_2):= (\mathscr{P}_1\oplus\mathscr{P}_2,\phi_1+\phi_2)
\]
called the \emph{orthogonal sum}.
\end{itemize}
\end{definition}

\begin{example}
Let $A$ be a commutative ring and $\mathscr{P}$ be a finitely generated projective module. Then there is a quadratic form whose underlying module is $\mathscr{P}\oplus\mathscr{P}^\vee$, equipped with the evaluation form $\op{ev}:(x,f)\mapsto f(x)$. The quadratic form $(\mathscr{P}\oplus\mathscr{P}^\vee,\op{ev})$ is called the \emph{hyperbolic space} associated to the projective module $\mathscr{P}$. In the special case where $\mathscr{P}=A$ is the free module of rank $1$, this is called the \emph{hyperbolic plane} $\mathbb{H}$ over $A$.
\end{example}

The following are the standard hyperbolicity notions from quadratic form theory, cf. \cite[Section VIII.2]{knus}.

\begin{definition}
A quadratic form is called \emph{hyperbolic}, if it is isometric to $\mathbb{H}(\mathscr{P})$ for some projective module $\mathscr{P}$.  A quadratic form $(\mathscr{P},\phi)$ is called \emph{stably hyperbolic} if there exists a projective module $\mathscr{Q}$ such that $(\mathscr{P},\phi)\perp\mathbb{H}(\mathscr{Q})$ is hyperbolic. A quadratic form $(\mathscr{P},\phi)$ over an integral domain $A$ is called \emph{rationally hyperbolic} if $(\mathscr{P},\phi)\otimes_A\op{Frac}(A)$ is hyperbolic.
\end{definition}

\begin{remark}
Note that stably hyperbolic forms are then necessarily of even rank, and stably hyperbolic forms are those that become $0$ in the Witt ring.
\end{remark}

For the purposes of the present paper, we will also be interested in stricter notions of stable triviality of quadratic forms:

\begin{definition}
A quadratic form $(\mathscr{P},\phi)$ is called \emph{stably trivial} if it becomes isometric to one of the split forms $\mathbb{H}^{\perp n}$ or $\mathbb{H}^{\perp n}\perp (A,a\mapsto a^2)$ after adding sufficiently many hyperbolic planes. 
\end{definition}

The stably trivial forms are those which represent classes in $\mathbb{Z}\cdot \mathbb{H}\oplus\mathbb{Z}\cdot [(A,a\mapsto a^2)]$ in the Grothendieck--Witt ring of $A$. 

The classical Witt cancellation theorem implies in particular that the notions of hyperbolic, stably trivial and stably hyperbolic all agree over fields: 

\begin{proposition}[Witt cancellation theorem]
Let $k$ be a field of characteristic $\neq 2$ and let $(V_1,\phi_1)$ and $(V_2,\phi_2)$ be two quadratic forms over $k$. If $(V_1,\phi_1)\oplus\mathbb{H}\cong (V_2,\phi_2)\oplus\mathbb{H}$ then $(V_1,\phi_1)\cong(V_2,\phi_2)$. In particular, a stably hyperbolic form is hyperbolic. 
\end{proposition}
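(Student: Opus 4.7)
The plan is to follow Witt's classical reflection argument. The first move is to diagonalize the hyperbolic plane as $\mathbb{H}\cong\langle 1,-1\rangle$, which is legitimate because $\op{char}(k)\neq 2$. This reduces the statement to the one-dimensional cancellation: for any $a\in k^{\times}$, an isometry $\langle a\rangle\perp(V_1,\phi_1)\cong\langle a\rangle\perp(V_2,\phi_2)$ implies $(V_1,\phi_1)\cong(V_2,\phi_2)$. Iterating with $a=1$ and $a=-1$ cancels a full hyperbolic plane and proves the theorem; induction on the number of hyperbolic summands then gives the corollary that stably hyperbolic forms over a field are hyperbolic.

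The key geometric tool is a transitivity statement: if $v,w$ are vectors of the same non-zero norm $a$ in a non-singular quadratic space $(W,\psi)$, then there is an element of $\op{O}(W,\psi)$ carrying $v$ to $w$. To produce this I would use the identity
\[
\psi(v+w)+\psi(v-w)=2\psi(v)+2\psi(w)=4a,
\]
which forces at least one of $v\pm w$ to be anisotropic. When $\psi(v-w)\neq 0$ the reflection $\tau_{v-w}(x)=x-\tfrac{B_\psi(x,v-w)}{\psi(v-w)}(v-w)$ is a well-defined isometry, and a direct computation using $B_\psi(v,v-w)=\psi(v-w)$ gives $\tau_{v-w}(v)=w$. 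In the remaining case $\psi(v-w)=0$, necessarily $\psi(v+w)=4a\neq 0$, and the composite $\tau_{w}\circ\tau_{v+w}$ sends $v\mapsto -w\mapsto w$.

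To conclude, given an isometry $f\colon\langle a\rangle\perp V_1\xrightarrow{\sim}\langle a\rangle\perp V_2$ and the distinguished norm-$a$ generators $e_1,e_2$ of the two $\langle a\rangle$-summands, apply the transitivity lemma inside $\langle a\rangle\perp V_2$ to obtain an isometry $\sigma$ with $\sigma(f(e_1))=e_2$. Then $\sigma\circ f$ identifies $e_1$ with $e_2$, and restriction to orthogonal complements gives an isometry $V_1\cong V_2$ since the complement of $e_i$ in $\langle a\rangle\perp V_i$ is precisely $V_i$ (this is where non-singularity of the forms is used, via the splitting off of anisotropic one-dimensional summands).

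The main obstacle is handling the degenerate sub-case of the transitivity lemma where $v-w$ happens to be isotropic; here the identity $\psi(v+w)+\psi(v-w)=4a$ is the crucial input, and it is precisely this identity that requires $\op{char}(k)\neq 2$ to be non-trivial. Everything else is a formal extraction from the existence of enough reflections.
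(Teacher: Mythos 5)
The paper does not prove this statement---it cites it as a classical fact and moves on. Your proof is a correct rendition of Witt's original reflection argument: the diagonalization $\mathbb{H}\cong\langle 1,-1\rangle$ (valid in characteristic $\neq 2$), reduction to one-dimensional cancellation, transitivity of $\op{O}(W,\psi)$ on vectors of a fixed nonzero norm via the identity $\psi(v+w)+\psi(v-w)=4a$ and the reflections $\tau_{v\pm w}$, and the final restriction to orthogonal complements all check out. One small point: the reflection $\tau_u$ is an isometry whenever $\psi(u)\neq 0$ with no non-singularity hypothesis on $(W,\psi)$, and $e_i^{\perp}=V_i$ already follows from $\psi(e_i)=a\neq 0$, so the regularity hypothesis you flag at the end is not actually needed for the argument.
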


\begin{remark}
\label{rem:abuse2}
All quadratic forms over fields considered in this paper are hyperbolic or of the form $\mathbb{H}^{\perp n}\oplus(A,a\mapsto a^2)$. In abuse of notation, the group $\op{SO}(n)$ will always denote the special orthogonal group associated to the \emph{split form} of rank $n$. I apologize to anyone who might be offended by this.
\end{remark}

Next, we will have a look at torsors under the various groups $\op{O}(n)$, $\op{SO}(n)$ and $\op{Spin}(n)$ and how they are related. In the end, we want to represent quadratic forms as suitable equivalence classes of spin torsors because the spin groups are easier to handle with the $\mathbb{A}^1$-homotopy methods.

First, we note that \'etale local triviality of quadratic forms implies that they can be viewed as torsors for the split orthogonal groups.

\begin{proposition}
\label{prop:formstorsors1}
Let $k$ be a field of characteristic unequal to $2$, let $X=\op{Spec}A$ be a $k$-scheme and denote by $\op{O}(n)$ the orthogonal group associated to the hyperbolic quadratic form on $k^n$. There is a functorial bijective correspondence between the set of isometry classes of quadratic forms over $A$ and the set $\op{H}^1_{\et}(X,\op{O}(n))$ of isomorphism classes of $\op{O}(n)$-torsors over $X$. 
\end{proposition}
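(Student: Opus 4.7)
The plan is to apply the standard descent principle that étale-locally trivial objects on $X$ are classified by étale cohomology with coefficients in their automorphism sheaf. Concretely, the correspondence sends a quadratic form $(\mathscr{P},\phi)$ of rank $n$ to the sheaf $\underline{\op{Isom}}\bigl((A^n,q_0),(\mathscr{P},\phi)\bigr)$ of isometries from the split form $q_0$ defining $\op{O}(n)$; this is a right $\op{O}(n)$-torsor once we know it is étale-locally nonempty. Conversely, an $\op{O}(n)$-torsor $T$ gives back a form via the contracted product $T\times^{\op{O}(n)}(A^n,q_0)$. That these constructions are mutually inverse and functorial in $A$ is formal descent, so the whole proof reduces to a single nontrivial input.

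That input, which I expect to be the main obstacle, is étale-local triviality: every non-singular rank-$n$ quadratic form $(\mathscr{P},\phi)$ over $A$ is, étale-locally on $X$, isometric to $(A^n,q_0)$. I would establish this in three steps. First, pass to a local ring $\mathscr{O}_{X,x}$ and diagonalize: since $2\in A^\times$, the polarization identity forces $\phi$ to represent a unit (else the bilinear form would vanish modulo the maximal ideal, contradicting non-singularity), and splitting off the resulting rank-one summand along its orthogonal complement and iterating yields $(\mathscr{P},\phi)\cong\langle a_1,\ldots,a_n\rangle$ with each $a_i\in A^\times$. Second, for each $i$ adjoin a square root of $a_i$ by the étale extension $A[T]/(T^2-a_i)$ (étale because $2a_i$ is invertible, so the discriminant $2T$ is a unit at the root); after finitely many such extensions the form pulls back to $\langle 1,\ldots,1\rangle$. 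Third, match $\langle 1,\ldots,1\rangle$ with $q_0$: after further adjoining $\sqrt{-1}$ one has $\langle 1,1\rangle\cong\mathbb{H}$ via $(x,y)\mapsto(x+\sqrt{-1}\,y,x-\sqrt{-1}\,y)$, which identifies $\langle 1,\ldots,1\rangle$ with $\mathbb{H}^{\perp n/2}$ when $n$ is even and with $\mathbb{H}^{\perp(n-1)/2}\perp\langle 1\rangle$ when $n$ is odd.

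With étale-local triviality in hand, the bijection of the proposition is essentially the statement that the stack of non-singular rank-$n$ quadratic forms is the classifying stack $\op{B}\op{O}(n)$: descent data for the form on an étale cover $U\to X$ on which it trivializes is exactly a Čech cocycle with values in $\op{O}(n)$. Functoriality in $X$ is automatic from the pullback-compatibility of both constructions, and naturality of the étale-local trivializations ensures the constructions are really inverse to one another up to canonical isomorphism.
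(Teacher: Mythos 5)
Your argument is correct and follows the same overall strategy as the paper: reduce the bijection to \'etale-local triviality of non-singular quadratic forms and then invoke standard descent (\v{C}ech cocycles with values in $\op{O}(n)$, respecting simplicial homotopies). The one substantive difference is that where the paper simply cites Swan's \cite[Corollary 1.2]{swan} for \'etale-local triviality, you supply a direct, elementary proof (local diagonalization, \'etale adjunction of square roots of the $a_i$ and of $\sqrt{-1}$, and identification of $\langle 1,\ldots,1\rangle$ with the split form). That argument is sound; the only point worth stating explicitly is that the diagonalization takes place at a stalk $\mathscr{O}_{X,x}$ and must be spread out to an affine Zariski neighbourhood before the square-root extensions can be glued into a genuine \'etale cover of $X$, which is routine by finite presentation. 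The trade-off is the usual one: the citation keeps the exposition short, while your self-contained version makes the mechanism behind local triviality transparent and independent of Swan's treatment.
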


\begin{proof}
A quadratic space is locally trivial in the \'etale topology by \cite[Corollary 1.2]{swan}. The remaining argument is standard. Given a quadratic form, we can find an \'etale cover $\bigsqcup_iU_i\to X$ over which the form trivializes and the transition morphisms are isometries. This implies that we get a morphism $\check{C}(U/X)\to{\op{B}}_{\et}\op{O}(n)$. Conversely, the cocycle condition implies that the locally trivial pieces can be glued to a quadratic form. Then one can check that simplicial homotopies correspond to globally defined isometries.
\end{proof}

There is a group extension $1\to\op{SO}(n)\to\op{O}(n)\to\mu_2\to 0$ which implies that the natural map ${\op{B}}_{\et}\op{SO}(n)\to{\op{B}}_{\et}\op{O}(n)$ is a degree 2 \'etale covering. Any quadratic form $(\mathscr{P},\phi)$ over $X$ induces a degree 2 \'etale covering $\tilde{X}\to X$, by pullback of the above along the classifying map $X\to{\op{B}}_{\et}\op{O}(n)$. This is the \emph{orientation covering} for the quadratic form $(\mathscr{P},\phi)$ whose class in $\op{H}^1_{\et}(X,\mu_2)$ is the first Stiefel--Whitney class  $\op{w}_1(\mathscr{P},\phi)$. We say that a quadratic form is \emph{orientable} if its orientation cover is the trivial degree 2 \'etale map $\op{id}\sqcup\op{id}:X\sqcup X\to X$. A choice of lift of $X\to{\op{B}}_{\et}\op{O}(n)$ to ${\op{B}}_{\et}\op{SO}(n)$ is called an \emph{orientation}. 

\begin{proposition}
\label{prop:formstorsors2}
Let $k$ be a field of characteristic unequal to $2$, let $X=\op{Spec}A$ be a $k$-scheme and denote by $\op{SO}(n)$ the special orthogonal group associated to the hyperbolic quadratic form on $k^n$. There is a functorial bijective correspondence between the set of isometry classes of orientable quadratic forms over $A$ and the set $\op{H}^1_{\et}(X,\op{SO}(n))$ of isomorphism classes of $\op{SO}(n)$-torsors over $X$. 
\end{proposition}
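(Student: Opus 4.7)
My plan is to mimic the proof of \prettyref{prop:formstorsors1}, but track the orientation. The key input is the short exact sequence $1\to\op{SO}(n)\to\op{O}(n)\to\mu_2\to 1$ of étale sheaves of groups, which yields a fiber sequence of classifying stacks ${\op{B}}_{\et}\op{SO}(n)\to{\op{B}}_{\et}\op{O}(n)\to{\op{B}}_{\et}\mu_2$ and a pointed cohomology sequence
\[
\op{H}^0_{\et}(X,\mu_2)\to \op{H}^1_{\et}(X,\op{SO}(n))\to\op{H}^1_{\et}(X,\op{O}(n))\xrightarrow{w_1}\op{H}^1_{\et}(X,\mu_2),
\]
where the last map sends a torsor (equivalently, by \prettyref{prop:formstorsors1}, a quadratic form) to its first Stiefel--Whitney class, i.e.\ the class of the orientation cover. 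Orientable forms are exactly those in the kernel of $w_1$, hence in the image of the restriction map from $\op{SO}(n)$-torsors.

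For the construction form-to-torsor, given an orientable form $(\mathscr{P},\phi)$ together with a chosen orientation (a lift of its classifying map along the degree $2$ étale cover ${\op{B}}_{\et}\op{SO}(n)\to{\op{B}}_{\et}\op{O}(n)$), I would use \cite[Corollary~1.2]{swan} to obtain an étale cover $\bigsqcup_i U_i\to X$ on which the form trivializes. Since the orientation is a section of an étale $\mu_2$-torsor, refining the cover if necessary lets us choose local trivializations of $(\mathscr{P},\phi)$ that also match the orientation; the resulting transition maps then automatically lie in $\op{SO}(n)$, producing a Čech cocycle $\check C(U/X)\to{\op{B}}_{\et}\op{SO}(n)$.

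In the reverse direction, given an $\op{SO}(n)$-torsor $T$, the associated bundle for the restriction of the standard representation $\op{O}(n)\curvearrowright k^n$ to $\op{SO}(n)$ produces a rank $n$ quadratic form over $A$. This form is orientable because its $w_1$ is the image of $[T]\in\op{H}^1_{\et}(X,\op{SO}(n))$ under the connecting map, which is zero by exactness; the reduction of the structure group already provides a canonical orientation.

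The functoriality statement and the fact that these constructions are mutually inverse on isomorphism classes are verified exactly as in \prettyref{prop:formstorsors1}, with the cocycle condition now forcing the gluing isometries to be orientation-preserving. I expect the main subtle point, rather than any genuine obstacle, to be bookkeeping of the orientation: a morphism of $\op{SO}(n)$-torsors corresponds precisely to an \emph{orientation-preserving} isometry of the underlying forms, so the bijection is really between $\op{H}^1_{\et}(X,\op{SO}(n))$ and isomorphism classes of \emph{oriented} quadratic forms; the word ``orientable'' in the statement should be read with an implicit choice of orientation, with the residual $\mu_2$-ambiguity visible through the action of $\op{H}^0_{\et}(X,\mu_2)$ appearing in the sequence above.
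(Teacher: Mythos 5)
Your setup is the same as the paper's: the short exact sequence $1\to\op{SO}(n)\to\op{O}(n)\to\mu_2\to 1$, the resulting fiber sequence of \'etale classifying spaces, and the observation that a quadratic form lifts along ${\op{B}}_{\et}\op{SO}(n)\to{\op{B}}_{\et}\op{O}(n)$ precisely when it is orientable. You then supply a concrete \v{C}ech construction of the bijection, which is fine but essentially reproduces Proposition~\ref{prop:formstorsors1}.

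Where your argument falls short is at the very end, and it is not merely a bookkeeping issue as you suggest. You write the exact sequence starting at $\op{H}^0_{\et}(X,\mu_2)$ and conclude that there is a ``residual $\mu_2$-ambiguity'' so that the bijection is ``really'' with \emph{oriented} forms. But the proposition as stated is correct, and the point you are missing is precisely what the paper emphasizes: the sequence extends one step further to the left,
\[
[X,\op{O}(n)]\to[X,\mu_2]\to[X,{\op{B}}_{\et}\op{SO}(n)]\to[X,{\op{B}}_{\et}\op{O}(n)],
\]
and the first map is \emph{surjective}, because the projection $\op{O}(n)\to\mu_2$ splits (send $-1$ to a fixed reflection). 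Surjectivity of $[X,\op{O}(n)]\to[X,\mu_2]$ kills the connecting map $[X,\mu_2]\to[X,{\op{B}}_{\et}\op{SO}(n)]$, which is exactly the statement that any two orientations of a given form are related by a global isometry, hence induce isomorphic $\op{SO}(n)$-torsors. That is what makes $\op{H}^1_{\et}(X,\op{SO}(n))\to\op{H}^1_{\et}(X,\op{O}(n))$ injective, and why the target of the bijection is isometry classes of \emph{orientable} forms, with no choice of orientation needed. Without this step your argument only yields a surjection from $\op{SO}(n)$-torsors onto orientable forms, not a bijection.
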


\begin{proof}
By what was said before, the orientability is the necessary and sufficient condition for lifting. The next claim is that there is effectively no choice for the lift. Consider the relevant homotopy sequence in the homotopy theory of simplicial sheaves on the big \'etale site
\[
[X,\op{O}(n)]\to[X,\mu_2]\to[X,{\op{B}}_{\et}\op{SO}(n)]\to[X,{\op{B}}_{\et}\op{O}(n)]
\]
whose exactness essentially states that the choice of lifts is the choice of orientations, up to isometries. The first map, induced by the projection $\op{O}(n)\to\mu_2$, is a surjection, saying that all orientations are equivalent up to isometry. Hence we get an injection of orientable forms into all forms.
\end{proof}

The functor $X\mapsto \op{H}^1_{\et}(X,\op{SO}(n))$ mapping a (smooth) scheme to the pointed set of isometry classes of orientable quadratic forms of rank $n$ is not representable in $\mathbb{A}^1$-homotopy, due to examples of non-trivial quadratic forms on affine spaces. However, as we recall later, the functor $X\mapsto \op{H}^1_{\op{Nis}}(X,\op{SO}(n))$ is $\mathbb{A}^1$-representable which is why we are interested in rationally hyperbolic forms here.

\begin{proposition}
\label{prop:shformstorsors}
Let $k$ be an infinite field of characteristic unequal to $2$, let $X=\op{Spec}A$ be a smooth affine $k$-scheme and denote by $\op{SO}(n)$ the special orthogonal group associated to the hyperbolic quadratic form on $k^n$.
The bijection of Proposition~\ref{prop:formstorsors2} restricts to a bijection from the isometry classes of rationally hyperbolic quadratic forms to the set 
$\op{H}^1_{\op{Nis}}(X,\op{SO}(n))$ of rationally trivial $\op{SO}(n)$-torsors. Moreover, this bijection restricts to an injection from stably trivial forms into rationally trivial $\op{SO}(n)$-torsors.
\end{proposition}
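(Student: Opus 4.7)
The plan is to decompose the statement into three steps: (a) match rationally hyperbolic orientable forms with the étale $\op{SO}(n)$-torsors that are trivial at each generic point; (b) identify such rationally trivial étale torsors with elements of $\op{H}^1_{\op{Nis}}(X,\op{SO}(n))$; and (c) verify that stably trivial forms fall into the rationally trivial subset.

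For step (a), under the bijection of \prettyref{prop:formstorsors2} together with the convention of \prettyref{rem:abuse2} that $\op{SO}(n)$ is the stabiliser of the split form, the neutral class of $\op{H}^1_{\et}(-,\op{SO}(n))$ corresponds to the split form itself. Hence a form $(\mathscr{P},\phi)$ becomes isometric to the split form over $K=\op{Frac}(A)$ -- the correct reading of ``rationally hyperbolic'' for the purposes of this statement -- exactly when the associated étale $\op{SO}(n)$-torsor is trivial over $\op{Spec}K$.

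Step (b) is the main technical input, and the step I expect to be the principal obstacle. I would invoke a Nisnevich--Grothendieck--Serre type result for $\op{SO}(n)$: for smooth $X$ over an infinite field $k$ of characteristic $\neq 2$, the comparison map $\op{H}^1_{\op{Nis}}(X,\op{SO}(n))\to \op{H}^1_{\et}(X,\op{SO}(n))$ is injective with image exactly the generically trivial torsors. Equivalently, a non-singular quadratic space over a regular local ring containing $k$ is split as soon as it is split at the generic point; this is purity for quadratic forms due to Ojanguren--Panin, building on earlier work of Ojanguren, Parimala--Sridharan and Colliot-Thélène. This local-global statement is what promotes a generic hypothesis on a form to the required Nisnevich-local triviality of the associated torsor.

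Step (c) is then short. Stably trivial forms are orientable because $\op{w}_1$ is additive on orthogonal sums, vanishes on $\mathbb{H}$ and on $(A,a\mapsto a^2)$, and therefore vanishes on any form that becomes split after adding hyperbolic planes. They are also rationally trivial as $\op{SO}(n)$-torsors: an isometry $(\mathscr{P},\phi)\perp\mathbb{H}^{\perp r}\cong\mathbb{H}^{\perp s}$ (or $\mathbb{H}^{\perp s}\perp(A,a\mapsto a^2)$) can, after tensoring with $K$, be cancelled by Witt's theorem over $K$ to conclude that $(\mathscr{P},\phi)\otimes K$ is itself the corresponding split form. The asserted injection then follows by restricting the bijection established in the first part.
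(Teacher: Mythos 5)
Your overall strategy matches the paper's: restrict the bijection of Proposition~\ref{prop:formstorsors2}, identify the relevant subset of étale torsors with $\op{H}^1_{\op{Nis}}$ via a Nisnevich/Grothendieck--Serre input, and handle stably trivial forms via Witt cancellation over $\op{Frac}(A)$. Your step (b) is a correct and more explicit unpacking of what the paper cites simply as ``Nisnevich's theorem,'' and step (c) is sound.

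However, there is a genuine gap in step (a). You set up a bijection between \emph{rationally hyperbolic orientable} forms and the generically trivial étale $\op{SO}(n)$-torsors, but the proposition asserts a bijection starting from \emph{all} rationally hyperbolic forms. For the claimed restriction of the Proposition~\ref{prop:formstorsors2} bijection to even make sense -- and for it to be surjective onto $\op{H}^1_{\op{Nis}}(X,\op{SO}(n))$ -- one must first show that every rationally hyperbolic form is orientable. Your $\op{w}_1$-additivity argument handles stably trivial forms, but it does not cover rationally hyperbolic ones: such a form only becomes split after base change to $\op{Frac}(A)$, not after adding hyperbolic planes over $A$, so additivity alone gives nothing. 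The missing argument is the one the paper makes explicitly: for $X$ smooth affine and irreducible, the orientation double cover of a rationally hyperbolic form has a rational section, and a finite étale cover of a normal irreducible scheme with a rational section is split; hence $\op{w}_1$ vanishes. Equivalently, $\op{H}^1_{\et}(X,\mu_2)\to\op{H}^1_{\et}(\op{Spec}\op{Frac}(A),\mu_2)$ is injective. Once this is supplied, your proof closes up and is essentially the same as the paper's, with the Ojanguren--Panin purity input made explicit where the paper black-boxes ``Nisnevich's theorem.''
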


\begin{proof}
We can assume $X$ irreducible. Then the orientation cover of a rationally hyperbolic quadratic form is rationally trivial. But a finite \'etale map onto a smooth scheme which has a rational section is already trivial. Therefore, a rationally hyperbolic quadratic form over $X$ is orientable. Now the restriction of the bijection from Proposition~\ref{prop:formstorsors2} to rationally hyperbolic quadratic forms follows basically from Nisnevich's theorem identifying $\op{H}^1_{\op{Nis}}$ as torsors with a rational section. 

So we are left with proving the statement about stably trivial forms. We can again assume that $X$ is irreducible. By extension of scalars, a stably trivial form on $A$ gives rise to a stably hyperbolic form on the field of fractions $\op{Frac}(A)$. By Witt cancellation, the form on $\op{Frac}(A)$ is hyperbolic. By functoriality of the correspondence in Proposition~\ref{prop:formstorsors2}, the associated $\op{SO}(n)$-torsor over $X$ is rationally trivial.
\end{proof}

\section{Recollections on \texorpdfstring{$\mathbb{A}^1$}{A1}-homotopy theory}
\label{sec:a1homotopy}

In this section, we recall some basics of $\mathbb{A}^1$-homotopy theory, in particular the representability result and basic results of $\mathbb{A}^1$-obstruction theory.

We assume the reader is familiar with the basic definitions of $\mathbb{A}^1$-homotopy theory, cf.~\cite{MField}. Short introductions to those aspects relevant for the obstruction-theoretic torsor classification can be found in papers of Asok and Fasel, cf. e.g. \cite{AsokFasel,AsokFaselSplitting}. The notation in the paper generally follows the one from \cite{AsokFasel}. We generally assume that we are working over base fields of characteristic $\neq 2$. 

\subsection{Representability theorem}
\label{sec:representable}

The following $\mathbb{A}^1$-representability theorem, significantly generalizing an earlier result of Morel in \cite{MField}, has been proved in \cite{gbundles2}. 

\begin{theorem}
\label{thm:representability}
Let $k$ be an infinite field, and let $X=\op{Spec} A$ be a smooth affine $k$-scheme. Let $G$ be a reductive group such that each absolutely almost simple component of $G$ is isotropic. Then there is a bijection 
\[
\op{H}^1_{\op{Nis}}(X;G)\cong [X, {\op{B}}_{\op{Nis}}G]_{\mathbb{A}^1}
\]
between the pointed set of isomorphism classes of rationally trivial $G$-torsors over $X$ and the pointed set of $\mathbb{A}^1$-homotopy classes of maps $X\to {\op{B}}_{\op{Nis}}G$.
\end{theorem}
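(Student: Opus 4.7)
The theorem is an instance of a general affine representability principle, and the plan is to verify the hypotheses of that criterion for the classifying space $\op{B}_{\op{Nis}}G$. The left side $\op{H}^1_{\op{Nis}}(X;G)$ is $\pi_0$ of the mapping space into $\op{B}_{\op{Nis}}G$ in the Nisnevich-local model structure by standard descent theory for torsors, and the goal is therefore to show that Nisnevich-local $\pi_0$ and $\mathbb{A}^1$-local $\pi_0$ agree on smooth affine $X$. The cleanest route is the following criterion: a simplicial presheaf $\mathcal{F}$ on smooth affine $k$-schemes satisfying (i) Nisnevich excision for affine distinguished squares and (ii) affine $\mathbb{A}^1$-invariance, in the sense that $\mathcal{F}(X)\to \mathcal{F}(X\times\mathbb{A}^1)$ is a weak equivalence for smooth affine $X$, has the property that the natural map $\mathcal{F}(X)\to \op{L}_{\mathbb{A}^1}\mathcal{F}(X)$ is a weak equivalence for smooth affine $X$. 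Applied to $\mathcal{F}=\op{B}_{\op{Nis}}G$ and passed to $\pi_0$, this is the claim.

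First I would check the Nisnevich excision property for $\op{B}_{\op{Nis}}G$, which amounts to the standard gluing of $G$-torsors along Nisnevich distinguished squares. The main step is then the affine $\mathbb{A}^1$-invariance assertion
\[
\op{H}^1_{\op{Nis}}(X;G)\xrightarrow{\cong}\op{H}^1_{\op{Nis}}(X\times\mathbb{A}^1;G)
\]
for smooth affine $X$. Using the exact sequences $1\to G^{\op{der}}\to G\to G/G^{\op{der}}\to 1$ and $1\to Z(G^{\op{sc}})\to G^{\op{sc}}\to G^{\op{der}}\to 1$, together with the further decomposition of $G^{\op{sc}}$ into its simple factors, this reduces to two essential cases: tori and isotropic simply connected simple groups. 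The toral case follows from Hilbert~90 together with the $\mathbb{A}^1$-invariance of $\op{Pic}$ on smooth affine schemes. For an isotropic simply connected simple factor $H$, the decisive input is a Horrocks-type theorem in the tradition of Quillen and Suslin, established by Stavrova and collaborators: every Nisnevich-locally trivial $H$-torsor on $A[t]$ is extended from $A$. Assembling these cases gives affine $\mathbb{A}^1$-invariance for all reductive $G$ satisfying the isotropy hypothesis.

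The hard part will be precisely this Horrocks-type result. The isotropy assumption enters through the requirement that the elementary subgroup of $H(A[t])$ generate the rational points over suitable local rings, which is what makes the Quillen-style patching argument run; the statement fails badly without isotropy, as already anisotropic tori over polynomial rings show. Once affine Nisnevich excision and affine $\mathbb{A}^1$-invariance are in hand, the abstract criterion produces a weak equivalence $\op{B}_{\op{Nis}}G(X)\simeq \op{L}_{\mathbb{A}^1}\op{B}_{\op{Nis}}G(X)$ for every smooth affine $X$, and passing to $\pi_0$ delivers the desired bijection with $[X,\op{B}_{\op{Nis}}G]_{\mathbb{A}^1}$.
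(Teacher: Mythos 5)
Your overall strategy — invoke an affine representability criterion and reduce to a Horrocks-type theorem for isotropic groups — is the same one used in the cited reference of Asok–Hoyois–Wendt, and your identification of the Horrocks input (affine homotopy invariance of $\op{H}^1_{\op{Nis}}(-;G)$, due to Panin–Stavrova–Vavilov, Fedorov–Panin, et al.) as the decisive ingredient is correct. However, the specific criterion you state cannot be applied directly to $\mathcal{F}=\op{B}_{\op{Nis}}G$, and the conclusion you draw from it is false. The hypothesis of your criterion requires $\op{B}_{\op{Nis}}G(X)\to\op{B}_{\op{Nis}}G(X\times\mathbb{A}^1)$ to be a \emph{weak equivalence} of simplicial sets, and the advertised conclusion is a weak equivalence $\op{B}_{\op{Nis}}G(X)\simeq\op{L}_{\mathbb{A}^1}\op{B}_{\op{Nis}}G(X)$. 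Neither of these can hold: at the base point one has $\pi_1\bigl(\op{B}_{\op{Nis}}G(X)\bigr)\cong G(X)$, and the presheaf $X\mapsto G(X)$ is manifestly not affine $\mathbb{A}^1$-invariant for a positive-dimensional reductive group (e.g.\ $\op{SL}_2(A[t])\neq\op{SL}_2(A)$). Likewise, $\pi_1\bigl(\op{L}_{\mathbb{A}^1}\op{B}_{\op{Nis}}G(X)\bigr)\cong[X,G]_{\mathbb{A}^1}$, which is in general a proper quotient of $G(X)$ (even over a field, $[\op{Spec}k,\op{SL}_2]_{\mathbb{A}^1}$ is trivial while $\op{SL}_2(k)$ is not). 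So the precise statement you would deduce is wrong, even though the $\pi_0$-level consequence you want is true.

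The actual argument instead applies the representability criterion to $\op{Sing}^{\mathbb{A}^1}\op{B}_{\op{Nis}}G$, which is affine $\mathbb{A}^1$-invariant by construction. The work then shifts to showing that $\op{Sing}^{\mathbb{A}^1}\op{B}_{\op{Nis}}G$ satisfies affine Nisnevich excision, and this is exactly where the Horrocks-type theorem enters: affine homotopy invariance of $\op{H}^1_{\op{Nis}}(-;G)$ (every Nisnevich-locally trivial $G$-torsor over $A[t]$ is extended from $A$) implies that $\pi_0\op{Sing}^{\mathbb{A}^1}\op{B}_{\op{Nis}}G(X)\cong\op{H}^1_{\op{Nis}}(X;G)$, and from there Nisnevich excision is inherited from cohomological descent. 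The criterion then gives a weak equivalence $\op{Sing}^{\mathbb{A}^1}\op{B}_{\op{Nis}}G(X)\simeq\op{L}_{\mathbb{A}^1}\op{B}_{\op{Nis}}G(X)$, and taking $\pi_0$ yields the desired bijection. So the missing idea in your write-up is the interposition of the singular construction; without it your chosen criterion has false hypotheses and a false conclusion. Your dévissage through $1\to G^{\op{der}}\to G\to G/G^{\op{der}}\to 1$ and $1\to Z(G^{\op{sc}})\to G^{\op{sc}}\to G^{\op{der}}\to 1$ to reduce the homotopy invariance to tori and isotropic simply connected simple factors is fine in outline, but it is worth flagging that the central kernel $Z(G^{\op{sc}})$ is a finite group of multiplicative type and the passage across the two isogenies requires a comparison of non-abelian cohomology sets and a separate input for $\op{H}^2$, which is where the isotropy hypothesis on \emph{every} almost simple factor is actually used.
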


We discuss how this will be applied in the present work. Let $k$ be a field of characteristic $\neq 2$. In the abusive language of Remark~\ref{rem:abuse2}, $\op{SO}(n)$ is the special orthogonal group associated to the hyperbolic form of rank $n$. In particular, it is a semisimple absolutely almost simple group over $k$ which is isotropic (except for $n=4$ where it is an almost-product of two components with these properties). In particular, combining the representability theorem with Proposition~\ref{prop:shformstorsors} provides a bijection between the pointed set of rationally hyperbolic quadratic forms over $A$ and $[X,{\op{B}}_{\op{Nis}}\op{SO}(n)]_{\mathbb{A}^1}$. A similar statement applies to the groups $\op{Spin}(n)$: being associated to the hyperbolic forms, they are semisimple absolutely almost simple (again, with an exception in case $n=4$) and isotropic and this produces a bijection between rationally trivial $\op{Spin}(n)$-torsors and $[X,{\op{B}}_{\op{Nis}}\op{Spin}(n)]_{\mathbb{A}^1}$. 

Note also that the above classification result sets up a bijection between isomorphism classes of torsors and \emph{unpointed maps} to the classifying space $\op{B}_{\op{Nis}}G$. When we consider the spin groups which are $\mathbb{A}^1$-connected, the corresponding classifying spaces will be $\mathbb{A}^1$-simply connected which implies that there is a canonical bijection between pointed maps $X_+\to\op{B}_{\op{Nis}}\op{Spin}(n)$, the latter pointed by its canonical base point and unpointed maps $X\to\op{B}_{\op{Nis}}\op{Spin}(n)$. This is not true for the special orthogonal groups, where the sheaf of connected components is $\mathcal{H}^1_{\et}(\mu_2)$, the Nisnevich sheafification of the indicated \'etale cohomology presheaf. The classification of  unpointed maps is obtained by taking a quotient of the pointed maps by the action of the fundamental group sheaf. We will discuss this in Section~\ref{sec:quad} when we deduce statements concerning quadratic forms from the classification results for $\op{Spin}(n)$-torsors.

\subsection{Obstruction theory}

While the study of $\mathbb{A}^1$-homotopy classes of maps into classifying spaces may not seem an easier subject than the torsor classification, the other relevant tool actually allowing to prove some meaningful statements is obstruction theory. The basic statements concerning obstruction theory as applied to torsor classification can be found in various sources, such as \cite{MField} or \cite{AsokFasel,AsokFaselSplitting}. We only give a short list of the relevant statements which are enough for our purposes. 

Let $(\mathscr{Y},y)$ be a pointed $\mathbb{A}^1$-simply connected space. Then there is a sequence of pointed  $\mathbb{A}^1$-simply connected spaces, the Postnikov sections $(\tau_{\leq i}\mathscr{Y},y)$, with morphisms $p_i:\mathscr{Y}\to\tau_{\leq i}\mathscr{Y}$ and morphisms $f_i:\tau_{\leq i+1}\mathscr{Y}\to\tau_{\leq i}\mathscr{Y}$ such that
\begin{enumerate}
\item $\bm{\pi}_j^{\mathbb{A}^1}(\tau_{\leq i}\mathscr{Y})=0$ for $j>i$,
\item the morphism $p_i$ induces an isomorphism on $\mathbb{A}^1$-homotopy group sheaves in degrees $\leq i$,
\item the morphism $f_i$ is an $\mathbb{A}^1$-fibration, and the $\mathbb{A}^1$-homotopy fiber of $f_i$ is an Eilenberg--Mac Lane space of the form $\op{K}(\bm{\pi}_{i+1}^{\mathbb{A}^1}(\mathscr{Y}),i+1)$,
\item the induced morphism $\mathscr{Y}\to\tau_{\leq i}\op{holim}_i\mathscr{Y}$ is an $\mathbb{A}^1$-weak equivalence. 
\end{enumerate}
Moreover, $f_i$ is a principal $\mathbb{A}^1$-fibration, i.e., there is a morphism, unique up to $\mathbb{A}^1$-homotopy, 
$$
k_{i+1}:\tau_{\leq i}\mathscr{Y} \to\op{K}(\bm{\pi}_{i+1}^{\mathbb{A}^1}(\mathscr{Y}),i+2)
$$
called the $i+1$-th \emph{$k$-invariant} and an $\mathbb{A}^1$-fiber sequence
$$
\tau_{\leq i+1}\mathscr{Y} \to \tau_{\leq i}\mathscr{Y} \xrightarrow{k_{i+1}}\op{K}(\bm{\pi}_{i+1}^{\mathbb{A}^1}(\mathscr{Y}),i+2). 
$$ 
From these statements, one gets the following consequence: for a smooth $k$-scheme $X$ and a pointed $\mathbb{A}^1$-simply connected space  $\mathscr{Y}$, a given pointed map $g^{(i)}:X_+\to\tau_{\leq i}\mathscr{Y}$ lifts to a map $g^{(i+1)}:X_+\to\tau_{\leq i+1}\mathscr{Y}$ if and only if the following composite is null-homotopic:
$$
X_+\xrightarrow{g^{(i)}}\tau_{\leq i}\mathscr{Y}\to\op{K}(\bm{\pi}_{i+1}^{\mathbb{A}^1}(\mathscr{Y}),i+2), 
$$
or equivalently, if the corresponding obstruction class vanishes in the cohomology group $\op{H}^{i+2}_{\op{Nis}}(X;\bm{\pi}_{i+1}^{\mathbb{A}^1}(\mathscr{Y}))$. 

If this happens, then the possible lifts can be parametrized via the following exact sequence 
\[
[X_+,\Omega\tau_{\leq i}\mathscr{Y}]\to [X_+,\op{K}(\bm{\pi}_{i+1}^{\mathbb{A}^1}(\mathscr{Y}),i+1)]
\to [X_+,\Omega\tau_{\leq i+1}\mathscr{Y}] \to 
[X_+,\Omega\tau_{\leq i}\mathscr{Y}]
\]
where we can also explicitly identify $[X_+,\op{K}(\bm{\pi}_{i+1}^{\mathbb{A}^1}(\mathscr{Y}),i+1)]_{\mathbb{A}^1}\cong \op{H}^{i+1}_{\op{Nis}}(X;\bm{\pi}_{i+1}^{\mathbb{A}^1}(\mathscr{Y}))$. 

We want to state clearly what this means for the classification of spin torsors or quadratic forms over smooth affine schemes. If we have a  torsor for $G=\op{Spin}(n)$ or $G=\op{SO}(n)$, then the map into the respective classifying space associated by the representability theorem~\ref{thm:representability} is completely described by a sequence of classes in the lifting sets $\op{H}^{i+1}_{\op{Nis}}(X;\bm{\pi}_{i+1}^{\mathbb{A}^1}({\op{B}}_{\op{Nis}}G))$, which are well-defined only up to the respective action of $[X_+,\Omega\tau_{\leq i}{\op{B}}_{\op{Nis}}G]_{\mathbb{A}^1}$. Only indices $0\leq i+1\leq n$ can appear for schemes of dimension $n$ since the Nisnevich cohomological dimension equals the Krull dimension. Conversely, to construct a torsor, one needs a sequence of lifting classes as above, such that the associated obstruction classes in the groups $\op{H}^{i+2}_{\op{Nis}}(X;\bm{\pi}_{i+1}^{\mathbb{A}^1}({\op{B}}_{\op{Nis}}G))$ vanish. Put bluntly, $\mathbb{A}^1$-obstruction theory translates questions about $\mathbb{A}^1$-homotopy classes of maps (from smooth schemes) into computations of certain (finitely many) cohomology classes. 
As a result, the classification of $\op{Spin}(n)$-torsors over smooth affine schemes of dimension $\leq 3$ requires only knowledge of the first three $\mathbb{A}^1$-homotopy sheaves of ${\op{B}}_{\op{Nis}}\op{Spin}(n)$. This information can be recovered from known computations of $\mathbb{A}^1$-homotopy sheaves for special linear and symplectic groups via the sporadic isomorphisms.

\section{Recollections on sporadic isomorphisms}
\label{sec:sporadic}

In this section, we provide some information on the sporadic isomorphisms identifying the low-rank spin groups with other low-rank groups (for which the relevant low-dimensional $\mathbb{A}^1$-homotopy sheaves have already been computed). Since we are interested in stabilization results and the classification of stably hyperbolic forms, we want to obtain more precisely that the sequence of stabilization morphisms  for the spin groups from $\op{Spin}(3)$ to $\op{Spin}(6)$ corresponds, under the sporadic isomorphisms, to the sequence 
\[
\op{SL}_2\xrightarrow{\Delta}\op{SL}_2\times\op{SL}_2 \xrightarrow{(2\alpha,2\beta)} \op{Sp}_4\xrightarrow{\iota}\op{SL}_4
\]
where $\Delta$ is the diagonal embedding, $(2\alpha,2\beta)$ is the embedding arising from the long roots for $\op{Sp}_4$, and $\iota$ is the natural embedding of $\op{Sp}_4$ as stabilizer of the standard symplectic form. This can be done by realizing the usual models of the sporadic isomorphisms inside the one for $\op{SO}(6)$. With this goal in mind, parts of the development will differ slightly from the common presentation of sporadic isomorphisms which doesn't pay respect to the stabilization morphisms. Still, most of the following will be well-known and familiar to many, cf. e.g. Garrett's notes \cite{garrett}.

We begin by recalling the identification of $\op{SL}_4$ with $\op{Spin}(6)$. Consider the $4$-dimensional $k$-vector space $V=k^4$ with the natural action of $\op{SL}_4$. This induces a natural action of $\op{SL}_4(k)$ on the $6$-dimensional space $V^{\wedge 2}$, i.e., a representation $\op{SL}_4\to\op{SL}_6$. On $V^{\wedge 2}$ there is a natural symmetric bilinear form
\[
\langle-,-\rangle:V^{\wedge 2}\times V^{\wedge 2}\to k:\langle v_1\wedge w_1,v_2\wedge w_2\rangle=\det(v_1,w_1,v_2,w_2).
\]
The form is non-degenerate and hyperbolic with an orthogonal basis given by 
\[
(\op{e}_1\wedge \op{e}_2)\pm (\op{e}_3\wedge \op{e}_4), \quad (\op{e}_1\wedge \op{e}_3)\pm (\op{e}_2\wedge \op{e}_4), \quad (\op{e}_1\wedge \op{e}_4)\pm (\op{e}_2\wedge \op{e}_3).
\]
The induced action of $\op{SL}_4$ on $V^{\wedge 2}$ will preserve this form, giving a homomorphism $\op{SL}_4\to\op{SO}(6)$. It can be checked via the Lie algebra that the kernel is finite, equal to the $\{\pm 1\}$, hence the homomorphism $\op{SL}_4\to\op{SO}(6)$ induces the sporadic isomorphism $\op{SL}_4\cong\op{Spin}(6)$. This implies the following:

\begin{proposition}
\label{prop:so6}
The morphism ${\op{B}}\op{SL}_4\to{\op{B}}_{\op{Nis}}\op{SO}(6)$ induced by the sporadic isogeny $\op{SL}_4\to\op{SO}(6)$ is given as follows: if $R$ is a commutative ring and $\mathscr{P}$ is an oriented projective $R$-module of rank $4$, then the associated quadratic form of rank $6$ is given by the projective $R$-module $\mathscr{P}^{\wedge 2}$ equipped with the evaluation form
\[
\mathscr{P}^{\wedge 2}\otimes\mathscr{P}^{\wedge 2}\to \mathscr{P}^{\wedge 4}\cong R, 
\]
where the first map is the projection from the tensor product to the exterior product and the second isomorphism is the orientation of $\mathscr{P}$.
\end{proposition}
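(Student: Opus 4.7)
The goal is to identify the functorial operation on bundles induced by the map of classifying stacks $\mathrm{B}\op{SL}_4 \to \mathrm{B}\op{SO}(6)$ coming from the sporadic isogeny constructed above the statement. Since the sporadic isogeny was built by taking the natural $\op{SL}_4$-representation on $V = k^4$, passing to the exterior square $V^{\wedge 2}$, and equipping the latter with the form $\langle v_1\wedge w_1, v_2\wedge w_2\rangle = \det(v_1,w_1,v_2,w_2)$, the plan is simply to run this construction in the bundle-theoretic setting (associated-bundle formalism) and then recognize the resulting pairing as the evaluation pairing through the orientation.

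First I would recall that $\op{SL}_4$-torsors over $X = \op{Spec}R$ correspond, via the standard representation, to pairs $(\mathscr{P},\omega)$ consisting of a rank $4$ projective $R$-module $\mathscr{P}$ together with a trivialization $\omega:\mathscr{P}^{\wedge 4}\xrightarrow{\cong}R$, i.e., an oriented rank $4$ projective module. The induced map on classifying stacks $\mathrm{B}\op{SL}_4\to \mathrm{B}\op{SO}(6)$ is given by the associated-bundle construction with respect to the representation $\rho:\op{SL}_4\to\op{SO}(6)$: if $\mathscr{T}$ denotes the $\op{SL}_4$-torsor whose associated rank $4$ bundle is $\mathscr{P}$, the associated $\op{SO}(6)$-torsor corresponds to the pair $(\mathscr{T}\times^{\op{SL}_4}V^{\wedge 2},\langle-,-\rangle)$.

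Next I would identify the underlying rank $6$ bundle as $\mathscr{P}^{\wedge 2}$: the functor $\mathscr{P}\mapsto \mathscr{P}^{\wedge 2}$ commutes with the associated-bundle construction, since it is a natural polynomial functor on the category of projective modules. For the form, I would note that the pairing $\langle v_1\wedge w_1,v_2\wedge w_2\rangle = \det(v_1,w_1,v_2,w_2)$ on $V^{\wedge 2}$ is, unwinding definitions, nothing other than the composition
\[
V^{\wedge 2}\otimes V^{\wedge 2}\to V^{\wedge 4}\xrightarrow{\cong}k,
\]
where the first arrow is the wedge-product multiplication and the second is the standard orientation on $k^4$. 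This identification is $\op{SL}_4$-equivariant by construction, as it only uses the multiplication on the exterior algebra and the determinant. Globalizing via the associated-bundle construction, the pairing on $\mathscr{P}^{\wedge 2}$ is therefore given by
\[
\mathscr{P}^{\wedge 2}\otimes\mathscr{P}^{\wedge 2}\to\mathscr{P}^{\wedge 4}\xrightarrow{\omega}R,
\]
which is exactly the stated evaluation form.

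The argument is essentially a naturality statement for the associated-bundle construction applied to a particular representation, so there is no serious obstacle; the only thing to pay attention to is the role of the orientation $\omega$, which enters precisely because the induced map goes out of $\mathrm{B}\op{SL}_4$ (as opposed to $\mathrm{B}\op{GL}_4$), and is responsible for trivializing $\mathscr{P}^{\wedge 4}$ so that the pairing lands in $R$ rather than in a line bundle. Checking that this orientation is compatible with the orientation of the target $\op{SO}(6)$-bundle (i.e.\ that the composite $\op{SL}_4\to\op{SO}(6)$ indeed factors through $\op{SO}$ and not just $\op{O}$) is already guaranteed by the construction of the isogeny in the discussion preceding the proposition.
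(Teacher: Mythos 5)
Your argument is correct and takes the same route the paper does: the paper gives no separate proof but simply derives the proposition from the explicit construction of the isogeny $\op{SL}_4\to\op{SO}(6)$ via the exterior square representation and the determinant pairing, and your proposal makes that derivation explicit by globalizing via the associated-bundle construction. The key observation, that $\det(v_1,w_1,v_2,w_2)$ is precisely the wedge-multiplication $V^{\wedge 2}\otimes V^{\wedge 2}\to V^{\wedge 4}\cong k$ followed by the standard trivialization, is the whole content and you identify it cleanly.
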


Next, we consider the sporadic isomorphism $\op{Sp}_4\cong\op{Spin}(5)$ and its relation to the description of $\op{Spin}(6)$ obtained above. There is a natural embedding of $\op{Sp}_4$ into $\op{SL}_4$ as subgroup of matrices preserving a symplectic form on $V=k^{4}$. The composition with the above identification provides a group homomorphism $\op{Sp}_4\to\op{SL}_4\to\op{SO}(6)$ arising from the induced action of $\op{Sp}_4$ on $V^{\wedge 2}$. Viewing the symplectic form on $V$ as a linear form $\omega:V^{\wedge 2}\to k$ gives a decomposition of the quadratic space $V$ as direct sum of the $5$-dimensional quadratic space $W=\ker\omega$ with a  line equipped with the standard form $x\mapsto x^2$. Now the action of $\op{Sp}_4$ on $V^{\wedge 2}$ will preserve $W=\ker \omega$, giving us a morphism $\op{Sp}_4\to\op{SO}(5)$. Again, it can be checked using the Lie algebra that this induces an isomorphism $\op{Sp}_4\cong\op{Spin}(5)$. 

We have therefore proved the following:

\begin{proposition}
\label{prop:so5}
The morphism ${\op{B}}\op{Sp}_4\to{\op{B}}_{\op{Nis}}\op{SO}(5)$ induced by the sporadic isogeny $\op{Sp}_4\to\op{SO}(5)$ is given as follows: let $R$ be a commutative ring and let $\mathscr{P}$ be a symplectic module of rank $2$, i.e., a projective module $\mathscr{P}$ of rank $4$ equipped with a symplectic form $\omega:\mathscr{P}^{\wedge 2}\to R$. The corresponding quadratic form of rank $5$ is given by the projective module $\ker\omega$ equipped with the evaluation form
\[
\ker\omega\hookrightarrow \mathscr{P}^{\wedge 2}\otimes\mathscr{P}^{\wedge 2}\to \mathscr{P}^{\wedge 4}\cong R.
\]
\end{proposition}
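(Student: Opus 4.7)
The plan is to piggyback on Proposition~\ref{prop:so6} by realising the sporadic isogeny $\op{Sp}_4\to\op{SO}(5)$ as the restriction of $\op{SL}_4\to\op{SO}(6)$ to the $\op{Sp}_4$-subgroup, and then identifying the invariant hyperplane in the rank-$6$ quadratic bundle with the globalised kernel of the symplectic form.

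The first step is to check that a symplectic module $(\mathscr{P},\omega)$ of rank $2$ over $R$ canonically supplies an oriented rank-$4$ projective module: the element $\omega\wedge\omega\in(\mathscr{P}^{\wedge 4})^\vee$ is nowhere-vanishing by nondegeneracy of $\omega$ (it is essentially the Pfaffian), so it trivialises $\det\mathscr{P}$. This exhibits the diagram of group homomorphisms
\[
\op{Sp}_4\hookrightarrow\op{SL}_4\longrightarrow\op{SO}(6)
\]
functorially at the level of torsors, with associated data $(\mathscr{P},\omega)\mapsto(\mathscr{P},\omega\wedge\omega)\mapsto(\mathscr{P}^{\wedge 2},\op{ev})$, where the rank-$6$ quadratic form is the one described by Proposition~\ref{prop:so6}.

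The second step is to show that the linear form $\omega:\mathscr{P}^{\wedge 2}\to R$ induces an orthogonal decomposition $\mathscr{P}^{\wedge 2}=\ker\omega\oplus L$ with respect to the evaluation form, in which $L$ is a rank-$1$ summand carrying the split form $x\mapsto x^2$. In a local symplectic basis $f_1,\dots,f_4$ with $\omega(f_1\wedge f_2)=\omega(f_3\wedge f_4)=1$, one verifies by direct computation that a suitably normalised vector $v\in\mathscr{P}^{\wedge 2}$ (essentially $\tfrac{1}{2}(f_1\wedge f_2+f_3\wedge f_4)$) satisfies $\op{ev}(v,v)=1$ and is orthogonal to $\ker\omega$; this uses the explicit orthogonal basis from Proposition~\ref{prop:so6} and requires $\op{char}k\neq 2$. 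Because $\op{Sp}_4$ preserves $\omega$ by definition, it preserves this orthogonal decomposition and, in particular, fixes $v$. The construction globalises by cocycle patching: transition matrices of the symplectic module lie in $\op{Sp}_4$ and therefore preserve the globally defined subbundle $\ker\omega\subset\mathscr{P}^{\wedge 2}$, yielding the desired $\op{SO}(5)$-torsor structure on $\ker\omega$ and the factorisation of $\op{Sp}_4\to\op{SO}(6)$ through $\op{SO}(5)$.

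The main obstacle is the bookkeeping in the second step: pinning down the correct normalisation of the distinguished vector spanning $L$ so that the complementary line carries exactly the form $x\mapsto x^2$ and not some unit multiple thereof, and then verifying that this local picture glues. Everything else is a naturality argument bouncing off the already-established description of the $\op{SL}_4\to\op{SO}(6)$-case.
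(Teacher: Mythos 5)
Your proposal follows the same route as the paper: restrict the $\op{SL}_4\to\op{SO}(6)$ identification to the subgroup $\op{Sp}_4$, view the symplectic form as a linear functional $\omega$ on $\mathscr{P}^{\wedge 2}$, and split off the invariant rank-$5$ summand $\ker\omega$ from the rank-$6$ quadratic module. The extra details you supply (that $\omega\wedge\omega$ furnishes the required orientation of $\mathscr{P}$, and the explicit complementary line) are correct and merely spell out what the paper's terser paragraph leaves implicit.
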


Moreover, the decomposition of the six-dimensional quadratic space $V$ as direct sum of $W$ and a line implies that we can in fact identify the stabilization morphism.

\begin{proposition}
\label{prop:stab56}
There is a commutative diagram
\[
\xymatrix{
\op{Sp}_4 \ar[r] \ar[d]_\cong & \op{SL}_4\ar[d]^\cong \\
\op{Spin}(5) \ar[r] & \op{Spin}(6)
}
\]
where the top horizontal is the natural embedding, the bottom horizontal is the stabilization morphism, and the verticals are the sporadic isomorphisms. 
\end{proposition}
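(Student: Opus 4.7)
My plan is to verify the commutative square by explicitly identifying, inside the six-dimensional quadratic space $V^{\wedge 2}$, a canonical one-dimensional orthogonal complement to $\ker\omega$ on which $\op{Sp}_4$ acts trivially; this reduces the commutativity of the outer square (with $\op{SO}$'s instead of $\op{Spin}$'s) to a direct computation, and the lift to $\op{Spin}$ then follows formally from connectedness.

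First, I would observe that the $\op{SL}_4$-representation $V^{\wedge 2}$ is self-dual via the non-degenerate bilinear form from Proposition~\ref{prop:so6}, giving an $\op{SL}_4$-equivariant isomorphism $V^{\wedge 2}\xrightarrow{\sim}(V^{\wedge 2})^\vee$. Under this isomorphism, $\omega\in(V^{\wedge 2})^\vee$ corresponds to an element $\tilde\omega\in V^{\wedge 2}$ which is fixed by $\op{Sp}_4$, since $\op{Sp}_4$ fixes $\omega$ by definition. For the standard $\omega=e_1^{*}\wedge e_2^{*}+e_3^{*}\wedge e_4^{*}$, one checks directly that $\tilde\omega=e_1\wedge e_2+e_3\wedge e_4$ and
\[
\phi(\tilde\omega)=B_\phi(e_1\wedge e_2,\, e_3\wedge e_4)=\det(e_1,e_2,e_3,e_4)=1,
\]
so $V^{\wedge 2}=\ker\omega\perp\langle\tilde\omega\rangle$ is an orthogonal decomposition of quadratic spaces in which the line $\langle\tilde\omega\rangle$ carries the standard form $x\mapsto x^2$.

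Since $\op{Sp}_4$ is connected and $\op{O}(\langle\tilde\omega\rangle)=\{\pm 1\}$, the action of $\op{Sp}_4$ on $\langle\tilde\omega\rangle$ is trivial, so the map $\op{Sp}_4\to\op{SO}(V^{\wedge 2})$ factors through the stabilization inclusion $\op{SO}(\ker\omega)\hookrightarrow\op{SO}(V^{\wedge 2})$. By direct comparison of the two constructions, the induced map $\op{Sp}_4\to\op{SO}(\ker\omega)$ is precisely the sporadic isogeny of Proposition~\ref{prop:so5}. Hence the analogous square with $\op{SO}(5),\op{SO}(6)$ in place of $\op{Spin}(5),\op{Spin}(6)$ commutes.

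Finally, to lift the commutativity to the level of $\op{Spin}$, I would apply the standard uniqueness of lifts from a connected group along a central extension. The two composites $\op{Sp}_4\hookrightarrow\op{SL}_4\xrightarrow{\sim}\op{Spin}(6)$ and $\op{Sp}_4\xrightarrow{\sim}\op{Spin}(5)\to\op{Spin}(6)$ are both group homomorphisms lifting the same map $\op{Sp}_4\to\op{SO}(6)$ along the double cover $\op{Spin}(6)\to\op{SO}(6)$. Their pointwise quotient is a map from the connected group $\op{Sp}_4$ to the discrete central kernel $\{\pm 1\}$; as a homomorphism sending the identity to $1$, it is constantly $1$, so the two lifts coincide. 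The only nontrivial step is the sign bookkeeping behind $\phi(\tilde\omega)=1$; beyond that, the argument is formal.
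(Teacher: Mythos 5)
Your proof is correct and follows essentially the same approach the paper sketches (without a formal proof) in the paragraph preceding Proposition~\ref{prop:stab56}: exhibit the $\op{Sp}_4$-invariant orthogonal line complementary to $\ker\omega$ in $V^{\wedge 2}$, deduce the factorization through $\op{SO}(5)\hookrightarrow\op{SO}(6)$, and lift to $\op{Spin}$ by connectedness. One small slip: with the paper's quadratic form $\phi(x)=x\wedge x$ one gets $\phi(\tilde\omega)=2$, not $1$ (you computed the bilinear form $\langle-,-\rangle$, which is $\tfrac{1}{2}B_\phi$); this is harmless since in characteristic $\neq 2$ the line is still non-degenerate and the resulting orthogonal group is unchanged.
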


%\begin{proposition}
%\label{prop:so5}
%Realize the symplectic group $\op{Sp}_4(k)$ as subgroup of $\op{GL}_4(k)$ satisfying the equation $M^{\op{t}}JM=J$ where 
%\[
%J=\left( \begin{array}{cccc}0&0&-1&0\\0&0&0&-1\\1&0&0&0\\0&1&0&0\end{array} \right).
%\]
%Consider the involution $G\mapsto G^\sigma=JG^{\op{t}}J^{-1}$. Let the group $\op{Sp}_4(k)$ act on $\op{Mat}_{4\times 4}(k)$ via $G\cdot A=GAG^\sigma$. 
%Denoting by $\langle-,-\rangle$ the trace form, the subspace of $\op{Mat}_{4\times 4}(k)$ of matrices with $M^\sigma=M$ and $\langle M,1\rangle=0$ is a $5$-dimensional bilinear space stable under the action of $\op{Sp}_4(k)$. This induces an isomorphism $\op{Sp}_4\cong\op{Spin}(5)$. 
%\end{proposition}
%The quadratic form is $q:M\mapsto \op{tr}(M^2)$, and the matrix $\op{diag}(1,-1,1,-1)$ lies on the hypersurface $q=1$. We compute the stabilizer of the that matrix under the $\op{Sp}_4$-action. A matrix is in the stabilizer if $G\op{diag}(1,-1,1,-1)G^\sigma=\op{diag}(1,-1,1,-1)$ 

Now we will deal with the sporadic isomorphism $\op{Spin}(4)\cong\op{SL}_2\times\op{SL}_2$ and its relation with the isomorphisms discussed previously. If we write the $4$-dimensional space $V$ with the symplectic form $\omega$ as a direct sum of two $2$-dimensional symplectic spaces, the sporadic isomorphism $\op{SL}_2\cong\op{Sp}_2$ induces natural embeddings $\op{SL}_2\times\op{SL}_2\hookrightarrow\op{Sp}_4\hookrightarrow \op{SL}_4$.
The first embedding is the one given by the long roots in $\op{Sp}_4$. The composite is the embedding of a Levi subgroup of the parabolic subgroup of $\op{SL}_4$ preserving the first of the two-dimensional subspaces.

We first set up the sporadic isomorphism $\op{SL}_2\times \op{SL}_2\cong \op{Spin}(4)$ and then show how this identification fits with the stabilization to $\op{Spin}(5)$. The following is the split version of the classical identification of $\op{SL}_2\times\op{SL}_2$ via its action on the quaternions. 

\begin{proposition}
\label{prop:so4}
Consider the matrix algebra $\op{Mat}_{2\times 2}(k)$ equipped with the action of $\op{SL}_2\times \op{SL}_2$ given by 
\[
\left(A=\left(\begin{array}{cc}
a_{11}&a_{12}\\a_{21}&a_{22}\end{array}\right), 
B=\left(\begin{array}{cc}
b_{11}&b_{12}\\b_{21}&b_{22}\end{array}\right),M\right)\mapsto  A\cdot M\cdot B^{-1}.
\]
On the matrix algebra, there is a non-degenerate symmetric bilinear form, the modified trace form $\langle X,Y\rangle =-\op{tr}(X\cdot WY^{\op{t}}W^{-1})$ where 
\[
W=\left(\begin{array}{cc}0&-1\\1&0\end{array}\right).
\]
The corresponding quadratic form is $2\det$; it is hyperbolic and preserved by the action of $\op{SL}_2\times\op{SL}_2$. 

Therefore, the above action of $\op{SL}_2\times\op{SL}_2$ on the matrix algebra induces an isomorphism $\op{SL}_2\times\op{SL}_2\cong\op{Spin}(4)$. The corresponding morphism  ${\op{B}}\op{SL}_2\times{\op{B}}\op{SL}_2\to{\op{B}}_{\op{Nis}}\op{SO}(4)$ of classifying spaces maps an $\op{SL}_2\times\op{SL}_2$-torsor to the associated bundle for the above representation.
\end{proposition}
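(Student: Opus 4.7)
My plan is a direct matrix computation followed by a short Lie-algebra argument. The heart of the calculation is the $2\times 2$ identity $W X^{\op{t}} W^{-1} = \op{adj}(X)$, which one verifies immediately from $W^{-1} = -W$; together with $X\cdot \op{adj}(X) = \det(X)\cdot I$ this gives
$\langle X,X\rangle = -\op{tr}(X\cdot \op{adj}(X)) = -2\det(X)$,
yielding (up to the sign convention relating $\langle X,X\rangle$ to $\phi(X)$) the claimed quadratic form $2\det$. Symmetry of $\langle-,-\rangle$ is a one-line trace manipulation using $W^{\op{t}} = -W$ and cyclicity, and non-degeneracy is read off from the fact that $2\det$ is hyperbolic: in the basis $\op{e}_{11}, \op{e}_{22}, \op{e}_{12}, \op{e}_{21}$ of $\op{Mat}_{2\times 2}(k)$ it decomposes as an orthogonal sum of two hyperbolic planes.

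Invariance of the form under the action is then immediate: for $A, B \in \op{SL}_2$ one has $\det(AMB^{-1}) = \det(A)\det(B)^{-1}\det(M) = \det(M)$, so $2\det$ is preserved. This produces a homomorphism $\rho\colon \op{SL}_2 \times \op{SL}_2 \to \op{O}(4)$, whose image lies in $\op{SO}(4)$ by connectedness of the source. To compute $\ker \rho$, if $(A,B)$ acts trivially then evaluating at $M = I$ gives $A = B$, and then $AM = MA$ for every $M \in \op{Mat}_{2\times 2}(k)$ forces $A$ to be scalar, hence $A = \pm I$. Thus $\ker \rho = \mu_2$ is the diagonally embedded central subgroup.

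To identify the image as $\op{Spin}(4)$ itself and not merely some double cover of $\op{SO}(4)$, I would argue via Lie algebras in the style already used elsewhere in the section: since $\ker \rho$ is finite, the induced map $\mathfrak{sl}_2 \oplus \mathfrak{sl}_2 \to \mathfrak{so}(4)$ is an injection of $6$-dimensional Lie algebras and hence an isomorphism. As $\op{SL}_2 \times \op{SL}_2$ is simply connected among split semisimple algebraic groups, $\rho$ realises it as the universal two-fold cover of $\op{SO}(4)$, which is by definition $\op{Spin}(4)$. The stated description of the morphism ${\op{B}}\op{SL}_2 \times {\op{B}}\op{SL}_2 \to {\op{B}}_{\op{Nis}}\op{SO}(4)$ is then purely tautological: a representation of algebraic groups $G \to H$ sends a $G$-torsor to its associated $H$-bundle, which here is exactly the rank-$4$ quadratic bundle with underlying module the tensor product of the two rank-$2$ bundles, equipped with the form induced by $2\det$.

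The only real obstacle is the bookkeeping of sign and scalar conventions in the matrix identity $W X^{\op{t}} W^{-1} = \op{adj}(X)$ and the relation between the bilinear form $\langle -,-\rangle$ and the quadratic form $\phi$; everything else is either a one-line verification or a standard invocation of the classification of split semisimple algebraic groups and their simply connected covers.
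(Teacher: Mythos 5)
Your proof is correct and fills in what the paper leaves implicit (the paper simply states the proposition, remarking beforehand that it is ``the split version of the classical identification of $\op{SL}_2\times\op{SL}_2$ via its action on the quaternions,'' and gives no written-out argument). The observation $WX^{\op{t}}W^{-1}=\op{adj}(X)$ is exactly the right reduction: it turns the modified trace form into $\langle X,Y\rangle=-\op{tr}(X\op{adj}(Y))$, which is visibly symmetric, bilinear, and recovers $\det$ on the diagonal; the hyperbolicity in the basis $\op{e}_{11},\op{e}_{22},\op{e}_{12},\op{e}_{21}$, the kernel computation via $M=I$ and the centrality argument, and the Lie algebra dimension count identifying the map as the simply connected cover are all sound. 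The description of the induced map on classifying spaces is indeed just the associated-bundle construction.

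One point you flag but sidestep is worth settling rather than deferring: with the form as written, $\langle X,X\rangle=-\op{tr}(X\op{adj}(X))=-2\det(X)$, and if one takes the normalization $\phi(X)=\tfrac{1}{2}\langle X,X\rangle$ from the definitions in Section~\ref{sec:prelims} one gets $\phi=-\det$, not $2\det$ as stated in the proposition. This looks like a sign slip in the statement (most likely the intended form is $\langle X,Y\rangle=\op{tr}(X\cdot WY^{\op{t}}W^{-1})$ without the minus). Since $-\det$ and $2\det$ are both hyperbolic and both $\op{SL}_2\times\op{SL}_2$-invariant, nothing downstream in the paper is affected, but in a proof you should pick a convention and verify it rather than write ``up to sign.''
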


\begin{proposition}
\label{prop:stab45}
Consider the action of $\op{SL}_2\times\op{SL}_2$ on $V^{\wedge 2}$ via the composition 
\[
\op{SL}_2\times\op{SL}_2\hookrightarrow\op{SL}_4\to\op{SO}(6).
\]
The action is trivial on  the subspace $\langle(\op{e}_1\wedge\op{e}_2)\pm(\op{e}_3\wedge\op{e}_4)\rangle$. Equipped with the restriction of the determinant form from $V^{\wedge 2}$, it is a hyperbolic plane.

The map $\op{Mat}_{2\times 2}(k)\to V^{\wedge 2}$ given by
\begin{eqnarray*}
\left(\begin{array}{cc}1&0\\0&0\end{array}\right)\mapsto \op{e}_4\wedge \op{e}_1, &&
\left(\begin{array}{cc}0&1\\0&0\end{array}\right)\mapsto \op{e}_1\wedge \op{e}_3, \\
\left(\begin{array}{cc}0&0\\1&0\end{array}\right)\mapsto \op{e}_4\wedge \op{e}_2, &&
\left(\begin{array}{cc}0&0\\0&1\end{array}\right)\mapsto \op{e}_2\wedge\op{e}_3
\end{eqnarray*}
is a morphism of $\op{SL}_2\times\op{SL}_2$-representations and of quadratic spaces which induces an isomorphism and isometry onto its image. 

With the identifications from Proposition~\ref{prop:so4} and \ref{prop:so5}, the morphism 
\[
\op{SL}_2\times\op{SL}_2\cong\op{Spin}(4)\hookrightarrow \op{Spin}(5)\cong \op{Sp}_4
\]
induced by the stabilization morphism is the long-root embedding. 
\end{proposition}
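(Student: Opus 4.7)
My plan is to verify the three claims by direct linear algebra and then identify the stabilization with the long-root embedding on the Spin side via uniqueness of lifts.

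For the first claim, observe that the block-diagonal embedding $\op{SL}_2\times\op{SL}_2\hookrightarrow\op{SL}_4$ preserves the splitting $V=V_1\oplus V_2$ with $V_1=\langle \op{e}_1,\op{e}_2\rangle$ and $V_2=\langle \op{e}_3,\op{e}_4\rangle$, so the summands $\Lambda^2 V_1=\langle \op{e}_1\wedge \op{e}_2\rangle$ and $\Lambda^2 V_2=\langle \op{e}_3\wedge \op{e}_4\rangle$ of the natural $\op{SL}_2\times\op{SL}_2$-decomposition $V^{\wedge 2}=\Lambda^2 V_1\oplus(V_1\otimes V_2)\oplus\Lambda^2 V_2$ are trivial subrepresentations, since each $\op{SL}_2$ acts on $\Lambda^2$ of its defining representation by the determinant. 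Computing the Gram matrix of the determinant pairing on this $2$-dimensional invariant subspace in the basis $\{\op{e}_1\wedge \op{e}_2,\;\op{e}_3\wedge \op{e}_4\}$ yields $\left(\begin{smallmatrix}0&1\\1&0\end{smallmatrix}\right)$, a hyperbolic plane, which in the basis $(\op{e}_1\wedge \op{e}_2)\pm(\op{e}_3\wedge \op{e}_4)$ diagonalizes as $\langle 2,-2\rangle$.

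For the second claim, $\op{SL}_2\times\op{SL}_2$-equivariance is checked on the four basis elements $E_{ij}$: expanding $AE_{ij}B^{-1}$ via the cofactor formula $B^{-1}=\left(\begin{smallmatrix}b_{22}&-b_{12}\\-b_{21}&b_{11}\end{smallmatrix}\right)$ for $B\in\op{SL}_2$ and comparing with the wedge products produced by the block-diagonal action on each $\op{e}_k$ gives matching $k$-linear combinations (easiest to carry out first for $E_{11}\mapsto \op{e}_4\wedge \op{e}_1$ and then by symmetry for the other three assignments). For the isometry property, I would compute the Gram matrix of the four target vectors $\op{e}_4\wedge \op{e}_1,\;\op{e}_1\wedge \op{e}_3,\;\op{e}_4\wedge \op{e}_2,\;\op{e}_2\wedge \op{e}_3$ under the determinant pairing on $V^{\wedge 2}$ and compare it with the Gram matrix of the modified trace form of Proposition~\ref{prop:so4}; the two agree, both producing a multiple of $\det M$ on $\op{Mat}_{2\times 2}(k)$ that matches the normalization from Proposition~\ref{prop:so4}.

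For the third claim, fix the symplectic form $\omega$ on $V$ with $\omega(\op{e}_1,\op{e}_2)=\omega(\op{e}_3,\op{e}_4)=1$ and the remaining standard pairings zero, so that the long-root embedding $\op{SL}_2\times\op{SL}_2\hookrightarrow\op{Sp}_4$ is precisely the block-diagonal embedding used above. The induced linear form $\omega:V^{\wedge 2}\to k$ sends $\op{e}_1\wedge \op{e}_2,\op{e}_3\wedge \op{e}_4\mapsto 1$ and vanishes on the remaining four basis wedges, so $\ker\omega$ is $5$-dimensional and splits orthogonally (a direct cross-pairing check) as the line spanned by $\op{e}_1\wedge \op{e}_2-\op{e}_3\wedge \op{e}_4$ (self-pairing $-2$) together with the $4$-dimensional image of $\op{Mat}_{2\times 2}(k)$ from part~2. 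By parts 1 and 2 the action of $\op{SL}_2\times\op{SL}_2$ under $\op{SL}_2\times\op{SL}_2\hookrightarrow\op{Sp}_4\xrightarrow{\cong}\op{SO}(\ker\omega)=\op{SO}(5)$ preserves this decomposition, is trivial on the line, and agrees on the $4$-dimensional summand with the representation of Proposition~\ref{prop:so4}. Hence the composite equals the stabilization $\op{SO}(4)\hookrightarrow\op{SO}(5)$ obtained from Proposition~\ref{prop:so4}. Since the two candidate maps $\op{SL}_2\times\op{SL}_2\to\op{Spin}(5)$ (via long-root and via stabilization through $\op{Spin}(4)$) lift the same morphism to $\op{SO}(5)$ and $\op{SL}_2\times\op{SL}_2$ admits no nontrivial character to $\mu_2$, the two lifts agree, which is the claim.

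The main point requiring care is the sign bookkeeping in part~2: the slightly asymmetric pattern in the assignments $E_{11}\mapsto \op{e}_4\wedge \op{e}_1$, $E_{12}\mapsto \op{e}_1\wedge \op{e}_3$, $E_{21}\mapsto \op{e}_4\wedge \op{e}_2$, $E_{22}\mapsto \op{e}_2\wedge \op{e}_3$ is what produces an honest isometry rather than merely an isomorphism up to a global scalar, and tracking these signs through both Gram-matrix computations (so that the resulting embedding agrees with the normalization of Proposition~\ref{prop:so4} on the nose) is the most delicate step.
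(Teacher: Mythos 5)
Your proposal is correct and takes essentially the same approach as the paper: the paper's own proof is simply the remark that the statement follows from direct computation, and your write-up supplies exactly the linear-algebra verification that remark alludes to. The equivariance check, the comparison of Gram matrices (both coming out to $\left(\begin{smallmatrix}0&0&0&-1\\0&0&1&0\\0&1&0&0\\-1&0&0&0\end{smallmatrix}\right)$ in the indicated bases), and the uniqueness-of-lifts argument in part~3 (no nontrivial homomorphism $\op{SL}_2\times\op{SL}_2\to\mu_2$) are all correct, and the sign bookkeeping you flag in part~2 is indeed the only delicate point.
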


\begin{proof}
I just spent the afternoon doing the flipping computations. 
\end{proof}

\begin{remark}
Of course the classical branching rules tell us that the restriction of the six-dimensional representation of $\op{SL}_4$ to $\op{SL}_2\times\op{SL}_2$ is the direct sum of the natural $4$-dimensional representation (corresponding to the identification with $\op{Spin}(4)$) and a $2$-dimensional trivial representation. But we need to identify exactly the morphisms on the groups to compute the induced maps on homotopy.
\end{remark}

Finally, we get to the sporadic isomorphism for the smallest group. 

\begin{proposition}
\label{prop:so3}
Consider the diagonal embedding $\Delta:\op{SL}_2\to\op{SL}_2\times\op{SL}_2$. Then $\op{SL}_2$ acts on $\op{Mat}_{2\times 2}(k)$ by conjugation. The action is trivial on the subspace spanned by the identity matrix. The action preserves the matrices of trace $0$. The restriction of the modified trace form to the subspace of trace $0$ matrices coincides with the trace form $\langle X,Y\rangle=\op{tr}(X\cdot Y)$, which is preserved by the action of $\op{SL}_2$. This induces an isomorphism $\op{SL}_2\cong\op{Spin}(3)$. The induced map ${\op{B}}\op{SL}_2\to{\op{B}}_{\op{Nis}}\op{SO}(3)$ on classifying spaces maps an $\op{SL}_2$-torsor to the associated vector bundle for this representation.
\end{proposition}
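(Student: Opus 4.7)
The plan is to derive Proposition~\ref{prop:so3} by restricting the identification of $\op{Spin}(4)$ in Proposition~\ref{prop:so4} along the diagonal embedding $\Delta$. First I would unpack the induced action on $\op{Mat}_{2\times 2}(k)$: the element $(A,A)$ sends $M\mapsto AMA^{-1}$, which is precisely conjugation. Since $AIA^{-1}=I$ and conjugation preserves trace, the action fixes the scalar subspace $\langle I\rangle$ pointwise and restricts to an action on the subspace of trace $0$ matrices, giving the required orthogonal decomposition.

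Next I would verify the compatibility of forms. A direct $2\times 2$ calculation shows that for every trace $0$ matrix $Y$ one has $-WY^{\op{t}}W^{-1}=Y$ (this is the only genuine calculation in the proof, and takes a few lines). Substituting into the definition of the modified trace form from Proposition~\ref{prop:so4} yields
\[
\langle X,Y\rangle = -\op{tr}\bigl(X\cdot WY^{\op{t}}W^{-1}\bigr) = -\op{tr}(X\cdot(-Y)) = \op{tr}(XY)
\]
whenever $Y$ has trace $0$, as asserted. Invariance of $\op{tr}(XY)$ under conjugation is then immediate from cyclicity of the trace.

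It remains to show that the resulting homomorphism $\op{SL}_2\to\op{SO}(3)$ realizes the sporadic isogeny. Following the same strategy as in Proposition~\ref{prop:so6}, one checks (either on the Lie algebra, or directly by noting that a matrix commuting with all of $\op{SL}_2$ must be scalar) that the kernel is $\{\pm I\}$. A dimension count then shows the map is an isogeny of degree $2$; since $\op{SL}_2$ is simply connected and $\op{SO}(3)$ has étale fundamental group $\mu_2$, this forces the map to factor through an isomorphism $\op{SL}_2\cong\op{Spin}(3)$. The description of the map of classifying spaces then follows by functoriality of the $\op{B}_{\op{Nis}}(-)$-construction applied to the representation just constructed.

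The main obstacle is merely the sign bookkeeping in the calculation $-WY^{\op{t}}W^{-1}=Y$: one has to be careful about conventions to see that the modified trace form of Proposition~\ref{prop:so4} restricts exactly to $\op{tr}(XY)$ (and not its negative) on trace $0$ matrices. Everything else is either standard Lie-theoretic input or formal consequence of the previously established sporadic isomorphism for $\op{Spin}(4)$.
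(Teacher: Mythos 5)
Your proof is correct and fills in exactly the argument the paper leaves implicit: Proposition~\ref{prop:so3} is stated without proof, treated as a direct consequence of Proposition~\ref{prop:so4} by restricting along the diagonal. The sign computation $-WY^{\op{t}}W^{-1}=Y$ for traceless $Y$ is correct (indeed $WM^{\op{t}}W^{-1}$ is the adjugate of $M$, which equals $-M$ when $\op{tr}M=0$), and the identification of the kernel and the simply-connectedness argument complete the picture in the standard way.
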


\begin{proposition}
\label{prop:stab34}
With the identifications of Propositions~\ref{prop:so4} and \ref{prop:so3}, the map 
\[\op{SL}_2\cong\op{Spin}(3)\to\op{Spin}(4)\cong\op{SL}_2\times\op{SL}_2
\]
induced by stabilization of the quadratic form is the diagonal embedding $\Delta$. 
\end{proposition}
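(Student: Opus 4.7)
The plan is to exploit the orthogonal direct sum decomposition
\[
\op{Mat}_{2\times 2}(k) = \op{Mat}^0_{2\times 2}(k) \oplus \langle I \rangle
\]
into trace-zero matrices and the span of the identity matrix, and to check that the diagonal action of $\op{SL}_2$ on $\op{Mat}_{2\times 2}(k)$ recovers, on the trace-zero summand, the representation used in Proposition~\ref{prop:so3} to define $\op{SL}_2 \cong \op{Spin}(3)$.

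First I would verify orthogonality with respect to the modified trace form from Proposition~\ref{prop:so4}: for any trace-zero matrix $X$,
\[
\langle X, I \rangle = -\op{tr}(X \cdot W I^t W^{-1}) = -\op{tr}(X) = 0,
\]
so $\langle I \rangle$ is precisely the orthogonal complement of the trace-zero subspace. By Proposition~\ref{prop:so3} the modified trace form restricts to the ordinary trace form on $\op{Mat}^0_{2\times 2}(k)$, so the displayed decomposition is an orthogonal sum of the three-dimensional quadratic space defining the sporadic isomorphism for $\op{Spin}(3)$ and an anisotropic line; the inclusion of the trace-zero summand thus realizes the stabilization of quadratic spaces from rank $3$ to rank $4$ underlying the morphism $\op{SO}(3) \hookrightarrow \op{SO}(4)$.

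Next, the diagonal $\Delta(A)=(A,A)$ acts on $\op{Mat}_{2\times 2}(k)$ through the formula of Proposition~\ref{prop:so4} as $M \mapsto A M A^{-1}$, i.e.\ by conjugation. This action fixes $I$ and preserves $\op{Mat}^0_{2\times 2}(k)$, and its restriction to the trace-zero summand is by construction the representation used in Proposition~\ref{prop:so3} to identify $\op{SL}_2 \cong \op{Spin}(3)$. Consequently the composite
\[
\op{SL}_2 \xrightarrow{\Delta} \op{SL}_2 \times \op{SL}_2 \cong \op{Spin}(4) \to \op{SO}(4)
\]
coincides with the composite $\op{SL}_2 \cong \op{Spin}(3) \to \op{SO}(3) \hookrightarrow \op{SO}(4)$, the final arrow being the stabilization induced by the orthogonal decomposition above.

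Finally, to promote this equality from $\op{SO}(4)$ to $\op{Spin}(4)$ I would appeal to uniqueness of the spin lift: two homomorphisms $\op{SL}_2 \to \op{Spin}(4)$ covering the same homomorphism to $\op{SO}(4)$ differ by a central-valued homomorphism $\op{SL}_2 \to \mu_2 = \ker(\op{Spin}(4) \to \op{SO}(4))$, which is trivial because $\op{SL}_2$ is connected as an algebraic group. Since both $\Delta$ (under the identifications) and the stabilization $\op{Spin}(3) \to \op{Spin}(4)$ are such lifts, they agree. The only nontrivial computation is the one-line orthogonality check above; the remainder is purely bookkeeping of the conjugation action together with the uniqueness of the spin lift.
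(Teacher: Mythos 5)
Your proof is correct and follows essentially the same route the paper leaves implicit: the paper gives no proof of this proposition because Proposition~\ref{prop:so3} already sets up the $\op{SL}_2\cong\op{Spin}(3)$ identification by restricting the $\op{Spin}(4)$ action along the diagonal to the trace-zero summand, so the claim is built into the constructions. Your write-up carefully supplies the two details the paper takes for granted --- the orthogonality check $\langle X,I\rangle=-\op{tr}(X)=0$ exhibiting $\op{Mat}_{2\times 2}=\op{Mat}^0_{2\times 2}\perp\langle I\rangle$, and the standard ``connected group has no nontrivial maps to $\mu_2$'' uniqueness of spin lifts to pass from the $\op{SO}$-level identity to the $\op{Spin}$-level one --- and both are correct.
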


A direct computation of the morphism $\op{SL}_4\to\op{SO}(6)$ shows that the composition $\op{SL}_3\to\op{SL}_4\to\op{SO}(6)$ induces the hyperbolic morphism ${\op{B}}\op{SL}_3\to{\op{B}_{\op{Nis}}}\op{SO}(6)$, cf. e.g. the proof of \cite[Proposition 2.3.1]{octonion}.

\begin{proposition}
\label{prop:hypso4}
With the identification of Proposition~\ref{prop:so4}, the composition 
\[
\op{SL}_2\xrightarrow{\iota_2}\op{SL}_2\times\op{SL}_2\to \op{SO}(4)
\]
induces the hyperbolic morphism ${\op{B}}\op{SL}_2\to{\op{B}}\op{SO}(4)$. 
\end{proposition}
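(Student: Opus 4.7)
The plan is to use the explicit model from Proposition~\ref{prop:so4} to compute the composite representation $\op{SL}_2\to\op{SO}(4)$ arising from $\iota_2$, and then to identify the associated quadratic bundle functor with the hyperbolic functor. Without loss of generality take $\iota_2$ to be the inclusion $B\mapsto(I,B)$; the other factor is handled identically via the transposition involution on $\op{Mat}_{2\times 2}(k)$. The restricted action on $\op{Mat}_{2\times 2}(k)$ is $M\mapsto M B^{-1}$, i.e.\ right multiplication by $B^{-1}$ on the pair of columns. Viewing $\op{Mat}_{2\times 2}(k)\cong V\otimes V^\vee$ with $\op{SL}_2$ acting only on the $V^\vee$-tensor factor (where $V=k^2$ denotes the standard representation), the restricted representation decomposes as $V\otimes V^\vee\cong (V^\vee)^{\oplus 2}$, the two summands being the two columns of a matrix.

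Next I would examine the restriction of the modified trace form to this decomposition. Up to a nonzero scalar the form is the determinant, and each column subspace is totally isotropic since $\det(c|c)=0$; the bilinear cross-pairing between the two column subspaces is the symplectic determinant pairing $V^\vee\wedge V^\vee\to\Lambda^2 V^\vee\cong k$. Thus at the level of representations the form is hyperbolic with the two $V^\vee$-summands as complementary Lagrangians.

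Passing to associated bundles, an $\op{SL}_2$-torsor corresponds to a rank $2$ vector bundle $\mathscr{E}$ with trivialized determinant $\det\mathscr{E}\cong\mathcal{O}_X$; the representation $(V^\vee)^{\oplus 2}$ then yields the bundle $\mathscr{E}^\vee\oplus\mathscr{E}^\vee$ equipped with the wedge pairing $\mathscr{E}^\vee\otimes\mathscr{E}^\vee\to\det\mathscr{E}^\vee\cong\mathcal{O}_X$ as cross-pairing. Transporting one of the summands along the symplectic isomorphism $\mathscr{E}\xrightarrow{\sim}\mathscr{E}^\vee$ induced by the trivialization of $\det\mathscr{E}$ converts the cross-pairing into the canonical evaluation pairing on $\mathscr{E}^\vee\oplus\mathscr{E}$, exhibiting the associated quadratic bundle as $\mathbb{H}(\mathscr{E})$. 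By functoriality of the associated-bundle construction this identifies the composite ${\op{B}}\op{SL}_2\to{\op{B}}_{\op{Nis}}\op{SO}(4)$ with the hyperbolic morphism.

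The main point requiring bookkeeping is tracking the nonzero scalar $2$ relating the modified trace form to $\det$ and the signs that appear when the wedge pairing is identified with the evaluation pairing; however, these are cosmetic, since scaling a hyperbolic form by a unit or negating one of the Lagrangian summands still yields an isometric hyperbolic form.
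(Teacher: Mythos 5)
Your argument is correct and reaches the same conclusion as the paper, but by a somewhat different route. The paper's proof is a terse group-theoretic computation: it checks that the composite $\op{SL}_2\to\op{SL}_2\times\op{SL}_2\to\op{SO}(4)\hookrightarrow\op{SL}_4$ is conjugate to $M\mapsto\op{diag}(M,(M^{-1})^{\op{t}})$, i.e.\ the standard hyperbolic embedding, and stops there. You instead work at the level of representations and bundles: you decompose $\op{Mat}_{2\times2}$ as two copies of the dual standard representation, observe each copy is Lagrangian for (the polarization of) the determinant form with the wedge pairing as cross-pairing, and then use the symplectic self-duality $\mathscr{E}\cong\mathscr{E}^\vee$ coming from the trivialized determinant to convert the wedge pairing into the evaluation pairing, thereby exhibiting the associated quadratic bundle as $\mathbb{H}(\mathscr{E})$. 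This is a legitimate and arguably more transparent argument, since it makes explicit \emph{why} the group-level conjugation yields the hyperbolic functor on bundles, a step the paper compresses into ``this proves the claim.'' One small slip to fix: right multiplication $M\mapsto MB^{-1}$ preserves the decomposition into \emph{rows} (each row transforms by $r\mapsto rB^{-1}$, i.e.\ as the dual representation), not columns, so the two Lagrangian summands should be the row subspaces, and the isotropy check is that $\det\left(\begin{smallmatrix}r\\0\end{smallmatrix}\right)=0$ rather than ``$\det(c|c)=0$.'' This is cosmetic and does not affect the rest of the argument, which is sound.
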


\begin{proof}
A direct computation of the action of $\op{SL}_2$ on the 4-dimensional matrix space shows that it is conjugate to the map $\op{SL}_2\to\op{SL}_4$ which sends a matrix $M$ to the block matrix whose two blocks are $M$ and $(M^{-1})^{\op{t}}$. This proves the claim.
\end{proof}

\section{Sporadic results on spin torsors}
\label{sec:dim3}

In this section, we discuss the classification of stably trivial spin torsors over smooth affine schemes of dimension $\leq 3$. The results will be based on the discussion of the sporadic isomorphisms in Section~\ref{sec:sporadic}. We will explain in Section~\ref{sec:quad} how the results from the present section translate to the quadratic form classification. 

Note that the sporadic isomorphisms imply that all the spin groups up to $\op{Spin}(6)$ are special in the sense of Serre. Therefore, the results below will in fact provide a classification of all $\op{Spin}(n)$-torsors for $n\leq 6$ on smooth affine schemes of dimension $\leq 3$. Since there is no difference between Nisnevich- and \'etale-local triviality of torsors, we omit the indices in $\op{B}_{\et}=\op{B}_{\op{Nis}}$. The various invariants  relevant for the classification of the spin bundles will sit in degrees 2 and 3; but the only stable invariant for dimension $\leq 3$ will be the second Chern class in $\op{CH}^2(X)$.

The results below exhibit essentially three different types of examples of stably trivial spin torsors on smooth affine schemes of dimension $\leq 3$. One type of examples comes from the changes in $\bm{\pi}_2^{\mathbb{A}^1}$ of the classifying spaces of spin groups in low ranks, where for $\op{Spin}(3)$ and $\op{Spin}(5)$ we have $\mathbf{K}^{\op{MW}}_2$ and consequently the lifting classes live in $\widetilde{\op{CH}}^2(X)$ which has some additional quadratic information not present in $\op{CH}^2(X)$. Moreover, $\bm{\pi}^{\mathbb{A}^1}_2({\op{B}}\op{Spin}(4))\cong\mathbf{K}^{\op{MW}}_2\times\mathbf{K}^{\op{MW}}_2$ which means that there are quite a lot stably trivial spin torsors of rank 4. The second type of example for low ranks can be traced to $\bm{\pi}_3^{\mathbb{A}^1}{\op{B}}\op{SL}_2$ which provides various types of stably trivial spin torsors related to stably free modules; these will already be trivial by stabilization to $\op{Spin}(6)$. Finally, the last type of examples are $\op{Spin}(6)$-torsors detected by lifting classes in $\op{CH}^3(X)$ which become trivial by stabilization to $\op{Spin}(7)$. 

We proceed from higher ranks to lower ranks, analysing every time the classification of all torsors and which torsors become trivial upon passing to higher ranks. 

\subsection{Remark on the stable range}

The first statement to make is that the relevant homotopy groups of classifying spaces of spin groups are stable from $\op{Spin}(7)$ on, i.e., the natural maps ${\op{B}}_{\op{Nis}}\op{Spin}(n)\to{\op{B}}_{\op{Nis}}\op{Spin}(n+1)$ induce isomorphisms on $\bm{\pi}_i^{\mathbb{A}^1}$ for $i\leq 3$ and $n\geq 7$. Part of this stabilization result was already established in \cite[Theorem 6.8]{torsors}. The other half of the stabilization results can be proved as in \cite{torsors} using the $\mathbb{A}^1$-fiber sequence $\op{Q}_{2n}\to{\op{B}}_{\op{Nis}}\op{Spin}(2n)\to{\op{B}}_{\op{Nis}}\op{Spin}(2n+1)$ and the identification of $\op{Q}_{2n}$ as motivic sphere from \cite{AsokDoranFasel}. This brings down the stable range to $n\geq 8$. Getting it down to $n\geq 7$ uses the octonion multiplication, cf. \cite[Corollary 3.4.3]{octonion}. 

With this stabilization at hand, it is then clear that there are no non-trivial stably trivial spin torsors for $\op{Spin}(n)$, $n\geq 7$ over smooth affine schemes of dimension $\leq 3$. The low-dimensional $\mathbb{A}^1$-homotopy sheaves for the spin groups $\op{Spin}(n)$ with $n\geq 7$ are given explicitly as follows, cf. \cite[Section 3.4]{octonion}: 
\[
\bm{\pi}^{\mathbb{A}^1}_1{\op{B}}_{\op{Nis}}\op{Spin}(n)=0,\; \bm{\pi}^{\mathbb{A}^1}_2{\op{B}}_{\op{Nis}}\op{Spin}(n)=\mathbf{K}^{\op{M}}_2,\; \bm{\pi}^{\mathbb{A}^1}_3{\op{B}}_{\op{Nis}}\op{Spin}(n)=\mathbf{K}^{\op{ind}}_3.
\]
Since $\op{H}^3_{\op{Nis}}(X,\mathbf{K}^{\op{ind}}_3)=0$, cf.~\cite[Lemma 3.2.1]{octonion}, there is only one interesting lifting class for the torsor classification which is the second Chern class in $\op{H}^2_{\op{Nis}}(X,\mathbf{K}^{\op{M}}_2)\cong\op{CH}^2(X)$. In particular, for a smooth affine scheme $X$ over an infinite field of characteristic unequal to $2$, of dimension $\leq 3$, the second Chern class induces a bijection 
\[
\op{c}_2:\op{H}^1_{\op{Nis}}(X,\op{Spin}(7))\xrightarrow{\cong} \op{CH}^2(X).
\]

\subsection{Stably trivial torsors of rank $6$}
We start by analysing stably trivial $\op{Spin}(6)$-torsors. By the sporadic isomorphism these are classified in the same way as rank $4$ vector bundles, and over schemes of dimension $3$ the latter are all determined by their Chern classes.

\begin{proposition}
\label{prop:classifyso6}
Let $k$ be an infinite perfect field of characteristic unequal to $2$, let $X=\op{Spec} A$ be a smooth affine scheme over $k$ of dimension $\leq 3$. We have the following statements for classification of maps $X\to{\op{B}}\op{Spin}(6)$:
\begin{enumerate}
\item There is an isomorphism $\bm{\pi}^{\mathbb{A}^1}_2{\op{B}}\op{Spin}(6)\cong\mathbf{K}^{\op{M}}_2$. The first non-trivial lifting class lives in $\op{H}^2_{\op{Nis}}(X,\mathbf{K}^{\op{M}}_2)\cong\op{CH}^2(X)$. In particular, if $X$ has dimension $\leq 2$, the second Chern class provides a bijection 
\[
\op{c}_2:[X,{\op{B}}_{\op{Nis}}\op{Spin}(6)]\cong\op{CH}^2(X). 
\]
\item There is an isomorphism $\bm{\pi}^{\mathbb{A}^1}_3{\op{B}}\op{Spin}(6)\cong\mathbf{K}^{\op{Q}}_3$. Using the identification $\op{H}^3_{\op{Nis}}(X,\mathbf{K}^{\op{Q}}_3)\cong\op{CH}^3(X)$,
there is an exact sequence
\[
\op{H}^1(X,\mathbf{K}^{\op{M}}_2)\xrightarrow{\Omega k_3}\op{CH}^3(X)\to [X,{\op{B}}_{\op{Nis}}\op{Spin}(6)]\xrightarrow{\op{c}_2}\op{CH}^2(X)\to 0. 
\]
Here the map $\Omega k_3$ is the looping of the Postnikov invariant. 

Over an algebraically closed field, the sequence splits and the invariants of a $\op{Spin}(6)$-torsor of the form $\mathscr{P}^{\wedge 2}$ are given by the Chern classes of the oriented projective rank $4$ module $\mathscr{P}$.
\end{enumerate}
\end{proposition}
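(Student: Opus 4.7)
The plan is to transport the classification through the sporadic isomorphism of Proposition~\ref{prop:so6}, which gives an $\mathbb{A}^1$-weak equivalence ${\op{B}}\op{Spin}(6)\simeq{\op{B}}\op{SL}_4$, and then apply the $\mathbb{A}^1$-obstruction theory recalled in Section~\ref{sec:a1homotopy}. Since $\op{SL}_4$ is $\mathbb{A}^1$-connected, ${\op{B}}\op{SL}_4$ is $\mathbb{A}^1$-simply connected and pointed and unpointed homotopy classes coincide, so the torsor set is controlled directly by the Postnikov tower.

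The input to the obstruction theory consists of the relevant $\mathbb{A}^1$-homotopy sheaves of ${\op{B}}\op{SL}_4$: Morel's computation yields $\bm{\pi}_2^{\mathbb{A}^1}{\op{B}}\op{SL}_4\cong\mathbf{K}^{\op{M}}_2$, and since rank $4$ lies in the known stability range for $\bm{\pi}_3^{\mathbb{A}^1}$ of ${\op{B}}\op{SL}_n$, one has $\bm{\pi}_3^{\mathbb{A}^1}{\op{B}}\op{SL}_4\cong\mathbf{K}^{\op{Q}}_3$. The first lifting class of a pointed map $X_+\to{\op{B}}\op{SL}_4$ therefore lives in $\op{H}^2_{\op{Nis}}(X,\mathbf{K}^{\op{M}}_2)\cong\op{CH}^2(X)$, and by universality this class is the second Chern class of the associated rank $4$ bundle. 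For $\dim X\leq 2$ all further lifting and obstruction groups sit in Nisnevich cohomology of degree $\geq 3$ on $X$, which vanishes for dimension reasons, so $\op{c}_2$ is a bijection and (1) is established.

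For (2), I first note that for $\dim X\leq 3$ the natural map $[X,{\op{B}}\op{SL}_4]\to[X,\tau_{\leq 3}{\op{B}}\op{SL}_4]$ is a bijection, because lifting and obstruction groups past level $3$ lie in $\op{H}^{\geq 4}_{\op{Nis}}(X,-)=0$. I would then extract the desired sequence from the principal Postnikov fibration
\[
\tau_{\leq 3}{\op{B}}\op{SL}_4 \to \tau_{\leq 2}{\op{B}}\op{SL}_4\simeq\op{K}(\mathbf{K}^{\op{M}}_2,2)\xrightarrow{k_3}\op{K}(\mathbf{K}^{\op{Q}}_3,4),
\]
by applying $[X_+,-]$ and using Bloch's formula $\op{H}^3(X,\mathbf{K}^{\op{Q}}_3)\cong\op{CH}^3(X)$ together with the vanishing of $\op{H}^4_{\op{Nis}}(X,\mathbf{K}^{\op{Q}}_3)$. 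The boundary map $\op{H}^1(X,\mathbf{K}^{\op{M}}_2)\to\op{CH}^3(X)$ is precisely the looping $\Omega k_3$ of the $k$-invariant, and the vanishing of $\op{H}^4$ yields the surjectivity of $\op{c}_2$.

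For the final statement over an algebraically closed field, Proposition~\ref{prop:so6} identifies the $\op{Spin}(6)$-torsor associated to an oriented rank $4$ projective module $\mathscr{P}$ with $\mathscr{P}^{\wedge 2}$, so by universality the two invariants in the exact sequence are naturally identified with the classical Chern classes $\op{c}_2(\mathscr{P})$ and $\op{c}_3(\mathscr{P})$. With this in place, the splitting is deduced from Mohan-Kumar-type realization results: over an algebraically closed field, every pair $(\alpha,\beta)\in\op{CH}^2(X)\oplus\op{CH}^3(X)$ is realized by an oriented rank $4$ module on a smooth affine threefold $X$, forcing the triviality of $\Omega k_3$ and providing a section of $\op{c}_2$. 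The main technical point I expect to wrestle with is pinning down the identification of the Postnikov lifting class in $\op{CH}^3(X)$ with the classical third Chern class of $\mathscr{P}$; this kind of identification of $k$-invariants with characteristic classes is standard in spirit but requires tracing through the obstruction-theoretic machinery carefully.
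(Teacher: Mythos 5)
Your proposal follows the paper's proof almost step for step: reduce via the sporadic isomorphism to $\op{B}\op{SL}_4$, invoke the stable computations of $\bm{\pi}_2^{\mathbb{A}^1}$ and $\bm{\pi}_3^{\mathbb{A}^1}$ (the paper cites Wendt's torsors paper for these stability statements), and run the obstruction-theoretic exact sequence using the vanishing of Nisnevich cohomology above the Krull dimension. The identifications of the lifting groups with $\op{CH}^2$ and $\op{CH}^3$ and the extraction of the four-term exact sequence from the principal Postnikov fibration are exactly what the paper does.

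The one place where your argument has a real gap is the deduction of the splitting over algebraically closed fields. You invoke a \emph{realization} result --- that every pair $(\alpha,\beta)\in\op{CH}^2(X)\times\op{CH}^3(X)$ is realized by an oriented rank $4$ bundle --- and claim this ``forces the triviality of $\Omega k_3$.'' That inference does not hold: surjectivity of $(\op{c}_2,\op{c}_3)$ alone tells you nothing about the kernel of $\op{CH}^3(X)\to[X,\op{B}\op{Spin}(6)]$, which is $\operatorname{im}(\Omega k_3)$. Realization gives you a section of $\op{c}_2$, but the fiber of $\op{c}_2$ over $0$ is the quotient $\op{CH}^3(X)/\operatorname{im}(\Omega k_3)$, and surjectivity onto this fiber is automatic regardless of whether $\Omega k_3$ vanishes. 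What the paper actually cites --- Kumar--Murthy, and Theorem 6.11 of the arXiv v1 of Asok--Fasel --- is the full \emph{bijection}: over algebraically closed fields, oriented rank $3$ (hence, by Serre splitting, rank $4$) bundles on smooth affine threefolds are classified by their Chern classes. It is the \emph{injectivity} part of that classification that forces $\Omega k_3=0$. (An alternative argument the paper alludes to in its closing remark: $\operatorname{im}(\Omega k_3)$ is torsion of bounded order, while $\op{CH}^3(X)$ is uniquely divisible over an algebraically closed field, so the image must vanish.) You should cite the bijection, not just the surjectivity, and then the triviality of $\Omega k_3$ falls out.
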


\begin{proof}
  We use the identification of Proposition~\ref{prop:so6}, whence it suffices to analyse the obstruction theory for maps into ${\op{B}}\op{SL}_4$. By \cite{torsors}, the first three $\mathbb{A}^1$-homotopy sheaves are stable and equal to the corresponding Quillen K-theory sheaves. The realizability of all lifting classes follows since the obstruction classes would live in degrees above the Nisnevich cohomological dimension of $X$. The statements made are then direct applications of the obstruction-theoretic formalism, cf.~Section~\ref{sec:a1homotopy}. 
For smooth affine $3$-folds over algebraically closed, it is known, cf. \cite{KumarMurthy} or \cite{AsokFasel} (Theorem 6.11 in v1 on the arXiv, unfortunately removed from the published paper), that the set of oriented vector bundles is actually identified with $\op{CH}^2(X)\times \op{CH}^3(X)$ via the Chern classes. Over non-algebraically closed fields, there could be some problems with torsion classes annihilated by the order of the Postnikov invariant $k_3$. 
\end{proof}

We need to analyse the stabilization from $\op{Spin}(6)$ to $\op{Spin}(n)$, $n\geq 7$. To deal with the third $\mathbb{A}^1$-homotopy sheaf, recall the following computation from  \cite[Section 3.4]{octonion}. 

\begin{proposition}
\label{prop:pi3so6}
The induced morphism
\[
\mathbf{K}^{\op{Q}}_3\cong\bm{\pi}_3^{\mathbb{A}^1}{\op{B}}\op{SL}_4\to \bm{\pi}_3^{\mathbb{A}^1}{\op{B}_{\op{Nis}}}\op{Spin}(6)\to \bm{\pi}_3^{\mathbb{A}^1}{\op{B}_{\op{Nis}}}\op{Spin}(7)\cong\mathbf{K}^{\op{ind}}_3
\]
is the natural projection. 
\end{proposition}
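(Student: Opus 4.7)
The plan is to deduce this from the $\mathbb{A}^1$-fiber sequence
\[
\op{Q}_6 \to {\op{B}}_{\op{Nis}}\op{Spin}(6) \to {\op{B}}_{\op{Nis}}\op{Spin}(7)
\]
already used in the stable range discussion above, cf.~\cite{AsokDoranFasel}. The total space is $\mathbb{A}^1$-equivalent to the motivic sphere $S^3 \wedge \mathbb{G}_m^{\wedge 3}$, so Morel's connectivity and stem computations give $\bm{\pi}_3^{\mathbb{A}^1}(\op{Q}_6) \cong \mathbf{K}^{\op{MW}}_3$ and $\bm{\pi}_2^{\mathbb{A}^1}(\op{Q}_6) = 0$. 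Via Proposition~\ref{prop:so6} the middle group $\bm{\pi}_3^{\mathbb{A}^1}{\op{B}}_{\op{Nis}}\op{Spin}(6)$ is identified with $\mathbf{K}^{\op{Q}}_3$. Thus the long exact sequence of $\mathbb{A}^1$-homotopy sheaves restricts to
\[
\mathbf{K}^{\op{MW}}_3 \xrightarrow{\partial} \mathbf{K}^{\op{Q}}_3 \to \bm{\pi}_3^{\mathbb{A}^1}{\op{B}}_{\op{Nis}}\op{Spin}(7) \to 0,
\]
which tells us that the stabilization map is surjective and that its kernel equals the image of $\partial$.

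By definition $\mathbf{K}^{\op{ind}}_3 = \mathbf{K}^{\op{Q}}_3 / \mathbf{K}^{\op{M}}_3$, where the symbol map $\mathbf{K}^{\op{M}}_3 \hookrightarrow \mathbf{K}^{\op{Q}}_3$ embeds Milnor into Quillen K-theory, and the natural projection named in the statement is the associated quotient. It therefore suffices to show that $\partial$ factors as the standard surjection $\mathbf{K}^{\op{MW}}_3 \twoheadrightarrow \mathbf{K}^{\op{M}}_3$ followed by the symbol inclusion into $\mathbf{K}^{\op{Q}}_3$, and that its image is all of $\mathbf{K}^{\op{M}}_3$; passing to cokernels then gives the claim.

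To pin $\partial$ down, I would work on residue-field stalks: $\mathbf{K}^{\op{MW}}_3(F)$ is generated by Morel's Milnor--Witt symbols $[a_1, a_2, a_3]$ with $a_i \in F^\times$. Using an explicit affine model of $\op{Q}_6$ together with the exterior-square description of the sporadic isomorphism $\op{SL}_4 \cong \op{Spin}(6)$ from Proposition~\ref{prop:so6}, the fiber inclusion $\op{Q}_6 \hookrightarrow {\op{B}}\op{SL}_4$ can be made explicit as a specific rank-$4$ vector bundle over $\op{Spec}(F[t_1^{\pm 1}, t_2^{\pm 1}, t_3^{\pm 1}])$ whose K-theoretic invariant is the Milnor symbol $\{a_1, a_2, a_3\} \in \op{K}_3^{\op{M}}(F) \subseteq \op{K}_3^{\op{Q}}(F)$. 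This identifies $\partial$ with the desired composite.

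The main obstacle is precisely this stalkwise computation: tracing a Milnor--Witt symbol through the fiber inclusion and the sporadic isomorphism into an honest vector bundle whose K-theoretic invariant is recognisably a symbol in Milnor K-theory. In essence this is carried out in \cite[Section 3.4]{octonion}, so in practice I would import the computation from there and assemble the exact sequence above to conclude.
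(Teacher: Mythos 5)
The paper gives no proof of this proposition at all: it is stated as a recollection, with the computation attributed to \cite[Section 3.4]{octonion}. So there is no in-paper argument to compare against, and the right question is whether your sketch gives a reasonable route to the cited result. In that respect the outline is sound. The reduction via the $\mathbb{A}^1$-fiber sequence $\op{Q}_6\to{\op{B}}_{\op{Nis}}\op{Spin}(6)\to{\op{B}}_{\op{Nis}}\op{Spin}(7)$, the identifications $\op{Q}_6\simeq S^3\wedge\mathbb{G}_m^{\wedge 3}$, $\bm{\pi}_3^{\mathbb{A}^1}(\op{Q}_6)\cong\mathbf{K}_3^{\op{MW}}$ and $\bm{\pi}_2^{\mathbb{A}^1}(\op{Q}_6)=0$, and the resulting exact sequence $\mathbf{K}_3^{\op{MW}}\to\mathbf{K}_3^{\op{Q}}\to\bm{\pi}_3^{\mathbb{A}^1}{\op{B}}_{\op{Nis}}\op{Spin}(7)\to 0$ are all correct, and the reduction to the claim that the image of the fiber-inclusion map is exactly $\mathbf{K}_3^{\op{M}}\subset\mathbf{K}_3^{\op{Q}}$ is the right one (modulo the compatibility remark that the cokernel identification should agree with the isomorphism $\bm{\pi}_3^{\mathbb{A}^1}{\op{B}}_{\op{Nis}}\op{Spin}(7)\cong\mathbf{K}_3^{\op{ind}}$ cited in the stable-range remarks, which follows if both are set up via the same stabilization argument).

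Two small caveats. First, a terminological slip: $\op{Q}_6$ is the fiber of the sequence, not the total space, so you should not refer to it as "the total space." Second, the heart of the matter — identifying $\mathbf{K}_3^{\op{MW}}\to\mathbf{K}_3^{\op{Q}}$ as projection to $\mathbf{K}_3^{\op{M}}$ followed by the symbol map — is exactly what you defer to \cite[Section 3.4]{octonion}, and the stalkwise picture you sketch (an explicit rank-$4$ bundle over a Laurent ring realizing a Milnor symbol) is quite vague and is not obviously the route taken there. Since you end up outsourcing the only nontrivial step to the same reference the paper itself cites, your write-up is best regarded as a plausible reduction of the cited result to a single, still-unverified, lemma rather than an independent proof; but as a structural outline it is correct and matches how one would expect the computation to go.
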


\begin{corollary}
Let $k$ be an infinite perfect field of characteristic unequal to $2$, let $X=\op{Spec} A$ be a smooth affine scheme of dimension $\leq 3$ over $k$. A $\op{Spin}(6)$-torsor is stably trivial if and only if its second Chern class is trivial. In particular, the stably trivial spin torsors of rank $6$ are in fact classified by 
\[
\op{coker}\left(\Omega k_3:\op{H}^1(X,\mathbf{K}^{\op{M}}_2)\to\op{CH}^3(X)\right). 
\]
\end{corollary}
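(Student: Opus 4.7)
My plan is to combine the four-term exact sequence from Proposition~\ref{prop:classifyso6}(2), Proposition~\ref{prop:pi3so6} describing the effect of stabilization on $\bm{\pi}_3^{\mathbb{A}^1}$, and the stable range statement for spin groups. From Section~\ref{sec:dim3} we already know that for $n\geq 7$ the relevant $\mathbb{A}^1$-homotopy sheaves of $\op{B}\op{Spin}(n)$ are $\mathbf{K}^{\op{M}}_2$ and $\mathbf{K}^{\op{ind}}_3$, and the vanishing $\op{H}^3_{\op{Nis}}(X,\mathbf{K}^{\op{ind}}_3)=0$ produces a bijection $\op{c}_2\colon [X,\op{B}\op{Spin}(7)]\xrightarrow{\cong}\op{CH}^2(X)$. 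Hence a $\op{Spin}(6)$-torsor $\xi$ is stably trivial precisely when the second Chern class of its image in $[X,\op{B}\op{Spin}(7)]$ under the stabilization map vanishes.

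Next I would verify that $\op{c}_2$ is natural with respect to the stabilization morphism $\op{B}\op{Spin}(6)\to\op{B}\op{Spin}(7)$. The induced map on $\bm{\pi}_2^{\mathbb{A}^1}$ is the identity on $\mathbf{K}^{\op{M}}_2$ — both sides identify the generator with the second Chern class of the underlying projective bundle associated via the sporadic isomorphism $\op{SL}_4\cong\op{Spin}(6)$ of Proposition~\ref{prop:so6} — and, by Proposition~\ref{prop:pi3so6}, the induced map on $\bm{\pi}_3^{\mathbb{A}^1}$ is the natural projection $\mathbf{K}^{\op{Q}}_3\twoheadrightarrow\mathbf{K}^{\op{ind}}_3$. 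Naturality of the $\op{c}_2$-invariant then yields that stable triviality is equivalent to $\op{c}_2(\xi)=0$, which is the first claim of the corollary.

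For the second claim, the stably trivial $\op{Spin}(6)$-torsors constitute $\ker(\op{c}_2)\subseteq [X,\op{B}_{\op{Nis}}\op{Spin}(6)]$. Applying exactness of the sequence in Proposition~\ref{prop:classifyso6}(2) twice — first at $[X,\op{B}_{\op{Nis}}\op{Spin}(6)]$ to rewrite this kernel as the image of $\op{CH}^3(X)$, and then at $\op{CH}^3(X)$ to identify the kernel of that map as $\op{im}(\Omega k_3)$ — one obtains the desired identification with $\op{coker}(\Omega k_3)$. The main obstacle I anticipate is pinning down the compatibility of the two incarnations of the second Chern class (the one on the $\op{SL}_4$ side via the sporadic isomorphism and the stable one from $\op{B}\op{Spin}(\infty)$) so that the map on $\bm{\pi}_2^{\mathbb{A}^1}$ is genuinely the identity on $\mathbf{K}^{\op{M}}_2$; the remaining steps are formal consequences of the cited obstruction-theoretic exact sequence together with the vanishing of $\op{H}^3_{\op{Nis}}(X,\mathbf{K}^{\op{ind}}_3)$.
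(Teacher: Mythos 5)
Your proof takes essentially the same route as the paper: reduce to the stable classification $\op{c}_2\colon[X,\op{B}\op{Spin}(7)]\cong\op{CH}^2(X)$ via $\op{H}^3_{\op{Nis}}(X,\mathbf{K}^{\op{ind}}_3)=0$, track the behaviour of $\bm{\pi}_2^{\mathbb{A}^1}$ and $\bm{\pi}_3^{\mathbb{A}^1}$ under stabilization using Proposition~\ref{prop:pi3so6}, and then read off the cokernel description by chasing the obstruction-theoretic exact sequence of Proposition~\ref{prop:classifyso6}(2). The one point you correctly flag as unresolved — why the stabilization $\op{B}\op{Spin}(6)\to\op{B}\op{Spin}(7)$ induces an isomorphism (rather than some nontrivial endomorphism) on $\bm{\pi}_2^{\mathbb{A}^1}\cong\mathbf{K}^{\op{M}}_2$ — is not best handled by matching generators through the sporadic isomorphism. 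The paper's cleaner argument, which you should substitute, uses the $\mathbb{A}^1$-fiber sequence $\op{Q}_6\to\op{B}_{\op{Nis}}\op{Spin}(6)\to\op{B}_{\op{Nis}}\op{Spin}(7)$ recalled in the stable-range discussion: since $\op{Q}_6\simeq\op{S}^3\wedge\mathbb{G}_{\op{m}}^{\wedge 3}$ is $\mathbb{A}^1$-$2$-connected, the long exact sequence of $\mathbb{A}^1$-homotopy sheaves forces the stabilization map to be an isomorphism on $\bm{\pi}_2^{\mathbb{A}^1}$, so the lifting class in $\op{H}^2_{\op{Nis}}(X,\mathbf{K}^{\op{M}}_2)\cong\op{CH}^2(X)$ (i.e.\ $\op{c}_2$) is a genuinely stable invariant. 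With that filled in, the rest of your argument goes through as written.
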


\begin{proof}
The morphism ${\op{B}}\op{Spin}(6)\to{\op{B}}_{\op{Nis}}\op{Spin}(\infty)$ induces an isomorphism on $\bm{\pi}^{\mathbb{A}^1}_2$, by $\mathbb{A}^1$-$2$-connectedness of $\op{Q}_6\cong\op{hofib}\left({\op{B}}_{\op{Nis}}\op{Spin}(6)\to{\op{B}}_{\op{Nis}}\op{Spin}(7)\right)$. The stable value of the second homotopy group is $\bm{\pi}^{\mathbb{A}^1}_2({\op{B}}_{\op{Nis}}\op{Spin}(n))\cong \mathbf{K}^{\op{M}}_2$ for $n\geq 6$. In particular, the lifting class in $\op{H}^2_{\op{Nis}}(X,\mathbf{K}^{\op{M}}_2)\cong\op{CH}^2(X)$, which is the second Chern class, is a stable invariant. 

The projection in Proposition~\ref{prop:pi3so6} induces the zero map 
\[
\op{CH}^3(X)\cong\op{H}^3_{\op{Nis}}(X,\mathbf{K}^{\op{Q}}_3)\to\op{H}^3_{\op{Nis}}(X,\mathbf{K}^{\op{ind}}_3)=0. 
\]
This implies that the third Chern class of the rank $6$ quadratic form is  an unstable invariant, and there is no stable invariant of degree $3$ for quadratic forms since $\op{H}^3_{\op{Nis}}(X,\mathbf{K}^{\op{ind}}_3)=0$. 

Combining the above assertions shows that a $\op{Spin}(6)$-torsor is stably trivial if and only if its second Chern class is trivial. The cokernel claim follows from Proposition~\ref{prop:classifyso6}.
\end{proof}

This provides many examples of stably trivial spin torsors over affine 3-folds, compare to a similar class of examples in \cite[Example 4.2.3]{octonion}.

\begin{example}
\label{ex:so6}
Let $\overline{X}$ be a smooth projective variety of dimension $3$ over $\mathbb{C}$ such that $\op{H}^0(\overline{X},\omega_{\overline{X}})\neq 0$, i.e., there is a global non-trivial holomorphic $3$-form. Let $X$ be a the complement of a divisor in $\overline{X}$. By \cite[Theorem 2]{MurthySwan} and \cite[Proposition 2.1 and Corollary 5.3]{BlochMurthySzpiro}, the Chow group $\op{CH}^3(X)$ is a divisible torsion-free group of uncountable rank. In particular, there are uncountably many isomorphism classes of $\op{Spin}(6)$-torsors which are stably trivial.
\end{example}

\subsection{Stably trivial torsors in rank $5$}
The next step is now to analyse the classification of torsors of rank $5$ and check which of these become trivial by passage to $\op{Spin}(6)$ (or by adding a hyperbolic plane). By the sporadic isomorphism, the relevant information is contained in the symplectic group $\op{Sp}_4$. 

\begin{proposition}
\label{prop:classifyso5}
Let $k$ be an infinite perfect field of characteristic unequal to $2$, let $X=\op{Spec} A$ be a smooth affine scheme over $k$ of dimension $\leq 3$. We have the following statements for classification of maps $X\to{\op{B}}\op{Spin}(5)$:
\begin{enumerate}
\item There is an isomorphism $\bm{\pi}^{\mathbb{A}^1}_2{\op{B}}\op{Spin}(5)\cong\mathbf{K}^{\op{MW}}_2$. The first non-trivial lifting class lives in $\op{H}^2_{\op{Nis}}(X,\mathbf{K}^{\op{MW}}_2)\cong\widetilde{\op{CH}}^2(X)$. In particular, if $X$ has dimension $\leq 2$, then the first Pontryagin class (as an invariant in the Chow--Witt ring of ${\op{B}}\op{Sp}_4$) provides a bijection 
\[
\op{p}_1:[X,{\op{B}}\op{Spin}(5)]\cong\widetilde{\op{CH}}^2(X). 
\]
\item There is an isomorphism $\bm{\pi}^{\mathbb{A}^1}_3{\op{B}}\op{Spin}(5)\cong\mathbf{KSp}_3$, and an exact sequence
\[
\op{H}^1_{\op{Nis}}(X,\mathbf{K}^{\op{MW}}_2)\xrightarrow{\Omega k_3} \op{H}^3_{\op{Nis}}(X,\mathbf{KSp}_3)\to [X,{\op{B}}\op{Spin}(5)]\xrightarrow{\op{p}_1}\widetilde{\op{CH}}^2(X)\to 0.
\]
The invariants are the characteristic classes of the symplectic bundle corresponding to the $\op{Spin}(5)$-torsor.
\end{enumerate}
\end{proposition}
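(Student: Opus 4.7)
The plan is to reduce everything to the classification of $\op{Sp}_4$-torsors via the sporadic isomorphism $\op{Sp}_4\cong\op{Spin}(5)$ of Proposition~\ref{prop:so5}, and then to apply the same obstruction-theoretic recipe that was used for $\op{Spin}(6)$ in the proof of Proposition~\ref{prop:classifyso6}. First I would identify the two $\mathbb{A}^1$-homotopy sheaves: since $\op{Sp}_4$ is in the symplectic stable range as far as $\bm{\pi}^{\mathbb{A}^1}_i$ for $i\leq 3$ are concerned (this is part of the standard computations of low-degree $\mathbb{A}^1$-homotopy sheaves of classifying spaces of symplectic groups via the motivic sphere fibration sequences relating $\op{BSp}_{2n}$ and $\op{BSp}_{2n+2}$), we have $\bm{\pi}^{\mathbb{A}^1}_2{\op{B}}\op{Sp}_4\cong\bm{\pi}^{\mathbb{A}^1}_2{\op{B}}\op{Sp}_\infty\cong\mathbf{K}^{\op{MW}}_2$ and $\bm{\pi}^{\mathbb{A}^1}_3{\op{B}}\op{Sp}_4\cong\bm{\pi}^{\mathbb{A}^1}_3{\op{B}}\op{Sp}_\infty\cong\mathbf{KSp}_3$. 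Combined with the sporadic isomorphism these are the desired identifications of $\bm{\pi}^{\mathbb{A}^1}_2$ and $\bm{\pi}^{\mathbb{A}^1}_3$ of ${\op{B}}\op{Spin}(5)$.

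Next, I would run through the Postnikov tower of ${\op{B}}\op{Sp}_4$. Since $\op{Sp}_4$ is $\mathbb{A}^1$-connected (indeed $\mathbb{A}^1$-simply connected as its classifying space), the first non-trivial Postnikov section is $\tau_{\leq 2}\op{B}\op{Sp}_4\simeq\op{K}(\mathbf{K}^{\op{MW}}_2,2)$, so the first lifting class of a map $X\to{\op{B}}\op{Sp}_4$ lives in $\op{H}^2_{\op{Nis}}(X,\mathbf{K}^{\op{MW}}_2)\cong\widetilde{\op{CH}}^2(X)$. Over a scheme of dimension $\leq 2$ the obstruction class to lifting this class further would live in $\op{H}^4$, which vanishes for dimension reasons, and there is no higher information to carry, so this class determines the torsor and gives the asserted bijection. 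The identification of this class with the first symplectic Pontryagin class (or equivalently, the first Borel class) is the standard characteristic class computation, checked by pulling back the generator along the canonical $\op{Sp}_4$-torsor on $\op{B}\op{Sp}_4$; this is already recorded in the literature on symplectic Chow--Witt characteristic classes.

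For the second item, I would pass to dimension $3$. The next Postnikov stage is governed by a principal $\mathbb{A}^1$-fibration with fiber $\op{K}(\mathbf{KSp}_3,3)$, and the obstruction to lifting through it would live in $\op{H}^5_{\op{Nis}}(X,\mathbf{KSp}_3)$, which vanishes for $\dim X\leq 3$. Thus every class in $\widetilde{\op{CH}}^2(X)$ lifts, and the fiber of the forgetful map $[X,{\op{B}}\op{Spin}(5)]\to\widetilde{\op{CH}}^2(X)$ is a quotient of $\op{H}^3_{\op{Nis}}(X,\mathbf{KSp}_3)$ by the action of $[X_+,\Omega\tau_{\leq 2}{\op{B}}\op{Sp}_4]_{\mathbb{A}^1}=\op{H}^1_{\op{Nis}}(X,\mathbf{K}^{\op{MW}}_2)$ via the looped Postnikov invariant $\Omega k_3$, giving the required four-term exact sequence. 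Finally, the identification of the invariants as symplectic characteristic classes of the associated rank-$4$ symplectic bundle follows from the description of the sporadic isomorphism in Proposition~\ref{prop:so5}.

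The main obstacle I expect is not the obstruction-theoretic bookkeeping, which is completely parallel to the $\op{SL}_4$ case of Proposition~\ref{prop:classifyso6}, but rather the precise identification of the first lifting class with a geometric Pontryagin class and the correct identification of $\bm{\pi}^{\mathbb{A}^1}_3{\op{B}}\op{Sp}_4$ with $\mathbf{KSp}_3$ (as opposed to only knowing it in the strictly stable range $\op{Sp}_{2n}$ with $n$ large). The former requires matching conventions between the Chow--Witt-valued characteristic class theory and the image of the generator of $\op{H}^2(\op{K}(\mathbf{K}^{\op{MW}}_2,2),\mathbf{K}^{\op{MW}}_2)$, and the latter requires the symplectic stabilization statement at the first unstable level, which is where I would expect to have to invoke the $\mathbb{A}^1$-connectivity of the relevant symplectic quasi-projective spaces.
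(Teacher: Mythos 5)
Your proposal follows exactly the paper's route: reduce to $\op{B}\op{Sp}_4$ via the sporadic isomorphism of Proposition~\ref{prop:so5}, identify $\bm{\pi}^{\mathbb{A}^1}_2$ and $\bm{\pi}^{\mathbb{A}^1}_3$ with the stable symplectic K-theory sheaves via the symplectic stabilization results (the paper cites \cite{torsors} for this), and then run the obstruction-theoretic bookkeeping noting that all obstruction classes vanish above the Nisnevich cohomological dimension. The concerns you flag at the end (pinning down the first-level symplectic stabilization and matching conventions for the Pontryagin class) are exactly what the paper disposes of by citation to \cite{torsors} and to the Chow--Witt characteristic class formalism, so the argument is essentially identical.
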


\begin{proof}
We use the identification of Proposition~\ref{prop:so5}. Then it suffices to analyse the obstruction theory for maps into ${\op{B}}\op{Sp}_4$. By \cite{torsors}, the first four homotopy sheaves of $\op{Sp}_4$ are stable and equal the respective symplectic K-groups. This implies the claim on homotopy groups. As in Proposition~\ref{prop:classifyso6} all classes are realizable because the relevant obstructions live above the Nisnevich cohomological dimension of $X$. The remaining claims are explicit formulations of the obstruction-theoretic statements in Section~\ref{sec:a1homotopy}. 
\end{proof}

Recall from \cite[Proposition 4.16]{AsokFasel} that we have the following presentation 
\[
\op{Ch}^2(X)\stackrel{\op{Sq}^2}{\longrightarrow}\op{Ch}^3(X)\to \op{H}^3_{\op{Nis}}(X,\mathbf{KSp}_3)\to 0.
\]
The corresponding invariant can be non-trivial, and classically is the invariant $\alpha$ used by Atiyah and Rees to describe complex plane bundles over $\op{S}^6$. However, for $X$ a smooth affine $3$-fold over an algebraically closed field, we have $\op{Ch}^3(X)=0$ because of the unique divisibility of top Chow groups. In particular, we get the following:

\begin{corollary}
Let $k$ be an algebraically closed field and let $X$ be a smooth affine scheme of dimension $\leq 3$ over $k$. Then the first Pontryagin class induces a bijection 
\[
\op{p}_1:\op{H}^1_{\op{Nis}}(X,\op{Spin}(5))\xrightarrow{\cong}\widetilde{\op{CH}}^2(X).
\]
\end{corollary}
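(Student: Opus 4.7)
The plan is to deduce the corollary directly from the exact sequence appearing in part (2) of \prettyref{prop:classifyso5}, namely
\[
\op{H}^1_{\op{Nis}}(X,\mathbf{K}^{\op{MW}}_2)\xrightarrow{\Omega k_3} \op{H}^3_{\op{Nis}}(X,\mathbf{KSp}_3)\to [X,{\op{B}}\op{Spin}(5)]\xrightarrow{\op{p}_1}\widetilde{\op{CH}}^2(X)\to 0,
\]
combined with the representability \prettyref{thm:representability}. Since $\op{Spin}(5)\cong\op{Sp}_4$ is $\mathbb{A}^1$-connected, its classifying space is $\mathbb{A}^1$-simply connected, so there is no difference between pointed and unpointed maps, and the representability theorem identifies $\op{H}^1_{\op{Nis}}(X,\op{Spin}(5))$ with $[X,{\op{B}}\op{Spin}(5)]_{\mathbb{A}^1}$. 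Thus the corollary reduces to showing that $\op{p}_1$ is a bijection on $[X,{\op{B}}\op{Spin}(5)]$.

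The surjectivity of $\op{p}_1$ is already contained in the exact sequence above, so the only task is injectivity, which by exactness amounts to proving that the group $\op{H}^3_{\op{Nis}}(X,\mathbf{KSp}_3)$ vanishes under the hypotheses of the corollary. For this, I would invoke the presentation recalled from \cite[Proposition 4.16]{AsokFasel},
\[
\op{Ch}^2(X)\xrightarrow{\op{Sq}^2}\op{Ch}^3(X)\to \op{H}^3_{\op{Nis}}(X,\mathbf{KSp}_3)\to 0,
\]
which reduces the task to showing $\op{Ch}^3(X)=\op{CH}^3(X)/2=0$.

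The vanishing of $\op{CH}^3(X)/2$ is the key input, and it is the point where the algebraic closedness of $k$ and the fact that $X$ is a smooth \emph{affine} $3$-fold enter. Indeed, by the unique divisibility of top Chow groups of smooth affine varieties over an algebraically closed field (the same fact already used in the discussion following \prettyref{prop:classifyso5}), $\op{CH}^3(X)$ is divisible, hence has trivial reduction mod~$2$. Consequently $\op{H}^3_{\op{Nis}}(X,\mathbf{KSp}_3)=0$, the map $\Omega k_3$ is automatically surjective onto the zero group, and $\op{p}_1$ becomes a bijection.

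The step I expect to be the only mildly subtle one is the verification that the map labeled $\op{p}_1$ coming from the obstruction-theoretic description of $[X,{\op{B}}\op{Spin}(5)]$ indeed corresponds, after the identification $\op{H}^1_{\op{Nis}}(X,\op{Spin}(5))\cong[X,{\op{B}}\op{Spin}(5)]_{\mathbb{A}^1}$ provided by \prettyref{thm:representability}, to the first Pontryagin class of the associated $\op{Sp}_4$-torsor through \prettyref{prop:so5}; but this is essentially by construction of $\op{p}_1$ as the first non-trivial lifting class in the Postnikov tower. All other ingredients are bookkeeping within the exact sequence.
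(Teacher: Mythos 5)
Your proposal is correct and follows essentially the same route as the paper: reduce to the exact sequence from Proposition~\ref{prop:classifyso5}(2), invoke the presentation $\op{Ch}^2(X)\to\op{Ch}^3(X)\to\op{H}^3_{\op{Nis}}(X,\mathbf{KSp}_3)\to 0$, and observe that $\op{Ch}^3(X)=0$ by the unique divisibility of top Chow groups of smooth affine varieties over an algebraically closed field. The paper leaves this as the discussion preceding the corollary rather than a formal proof, but your argument matches it step for step.
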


Now we discuss the behaviour of the lifting classes under the stabilization morphism.

\begin{proposition}
\begin{enumerate}
\item 
The  morphism
\[
\mathbf{K}^{\op{MW}}_2\cong \bm{\pi}_2^{\mathbb{A}^1}{\op{B}}\op{Spin}(5)\to 
\bm{\pi}_2^{\mathbb{A}^1}{\op{B}}\op{Spin}(6)\cong\mathbf{K}^{\op{M}}_2
\]
induced by the stabilization homomorphism $\op{Spin}(5)\to\op{Spin}(6)$ is the natural projection. 
\item
The morphism
\[
\mathbf{KSp}_3\cong \bm{\pi}_3^{\mathbb{A}^1}{\op{B}}\op{Spin}(5)\to 
\bm{\pi}_3^{\mathbb{A}^1}{\op{B}}\op{Spin}(6)\cong\mathbf{K}^{\op{Q}}_3
\]
induced by the stabilization homomorphism $\op{Spin}(5)\to\op{Spin}(6)$ is the forgetful morphism.
\end{enumerate}
\end{proposition}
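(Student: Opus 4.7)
The plan is to reduce the computation to the already-understood stabilization map $\op{B}\op{Sp}_4\to\op{B}\op{SL}_4$ via the sporadic isomorphisms. By \prettyref{prop:stab56} the stabilization morphism $\op{Spin}(5)\to\op{Spin}(6)$ is, under the sporadic isomorphisms $\op{Sp}_4\cong\op{Spin}(5)$ and $\op{SL}_4\cong\op{Spin}(6)$, the natural embedding $\op{Sp}_4\hookrightarrow\op{SL}_4$ as stabilizer of the standard symplectic form. Hence it suffices to compute the induced maps on $\bm{\pi}_i^{\mathbb{A}^1}$ for $i=2,3$ of the natural inclusion $\op{B}\op{Sp}_4\hookrightarrow\op{B}\op{SL}_4$.

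Next I would invoke the stabilization results of \cite{torsors}: the first three $\mathbb{A}^1$-homotopy sheaves of $\op{B}\op{Sp}_4$ and $\op{B}\op{SL}_4$ already agree with those of the stable classifying spaces $\op{B}\op{Sp}_\infty$ and $\op{B}\op{SL}_\infty$, and are canonically identified with the K-theory sheaves $\mathbf{KSp}_i$ and $\mathbf{K}^{\op{Q}}_i$ respectively. Under these identifications, the map induced by $\op{Sp}_4\hookrightarrow\op{SL}_4$ factors through the stable inclusion $\op{BSp}\hookrightarrow\op{BSL}$ up to group completion, and therefore agrees on $\bm{\pi}_i^{\mathbb{A}^1}$ with the standard forgetful map $\mathbf{KSp}_i\to\mathbf{K}^{\op{Q}}_i$. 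This gives part~(2) immediately, as the forgetful map is the content of the statement.

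For part~(1), it remains to show that under the identifications $\mathbf{KSp}_2\cong\mathbf{K}^{\op{MW}}_2$ (as in \prettyref{prop:classifyso5}) and $\mathbf{K}^{\op{Q}}_2\cong\mathbf{K}^{\op{M}}_2$, the forgetful map is the natural quotient $\mathbf{K}^{\op{MW}}_2\twoheadrightarrow\mathbf{K}^{\op{M}}_2$. My approach would be to check this on sections over fields, where both sides are generated (by Matsumoto-type presentations) by symbols $[a,b]$, resp.\ $\{a,b\}$, in units; the forgetful map sends an oriented symplectic rank-$2$ bundle class to the underlying oriented rank-$4$ projective module, and tracing through the Matsumoto presentations shows it kills precisely the elements $\eta\cdot\mathbf{K}^{\op{MW}}_3$, yielding the projection. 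A clean alternative is to use that the composition $\mathbf{K}^{\op{M}}_2\to\mathbf{K}^{\op{MW}}_2\to\mathbf{K}^{\op{M}}_2$ induced by $\op{SL}_4\xrightarrow{\op{hyp}}\op{Sp}_8\to\op{SL}_8$ is multiplication by $2$ (the standard hyperbolic-forgetful composition) and combine with the known splitting of $\mathbf{K}^{\op{MW}}_2\to\mathbf{K}^{\op{M}}_2$ after inverting $2$ to identify the map as the canonical projection.

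The main obstacle is this last identification of the forgetful map in part~(1): both the domain and target sheaves carry several natural-looking maps between them, and distinguishing the canonical projection from possible twisted variants (e.g.\ by a unit in $\mathbf{GW}$) requires carefully comparing either the Matsumoto generators on sections or the hyperbolic-forgetful composition. The reduction via the sporadic isomorphism and the K-theoretic identification of the stabilization are straightforward by \cite{torsors}; what really needs care is pinning down the precise form of the forgetful map on the Milnor-Witt sheaf.
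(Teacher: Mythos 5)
Your proposal is correct and takes essentially the same route as the paper: reduce via \prettyref{prop:stab56} to the standard inclusion $\op{Sp}_4\hookrightarrow\op{SL}_4$, identify the relevant homotopy sheaves with the stable K-theory sheaves via \cite{torsors}, and observe that the induced map is the forgetful map. The paper states this in a single sentence, taking as given that the forgetful map $\mathbf{KSp}_2\to\mathbf{K}^{\op{Q}}_2$ is the canonical projection $\mathbf{K}^{\op{MW}}_2\twoheadrightarrow\mathbf{K}^{\op{M}}_2$; you correctly flag that as the only step requiring a genuine argument, and your suggested checks (Matsumoto presentation on sections, or the hyperbolic--forgetful composition being multiplication by $2$ together with the splitting after inverting $2$) are reasonable ways to supply it.
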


\begin{proof}
This follows directly from Proposition~\ref{prop:stab56} and the fact that the forgetful morphism (from symplectic bundles to oriented vector bundles) is compatible with stabilization. 
\end{proof}

\begin{lemma}
Let $k$ be an infinite perfect field of characteristic $\neq 2$ and let $X$ be a smooth scheme. Then the morphism $\op{H}^3_{\op{Nis}}(X,\mathbf{KSp}_3)\to \op{H}^3_{\op{Nis}}(X,\mathbf{K}^{\op{Q}}_3)$ induced by the forgetful morphism is the zero map.
\end{lemma}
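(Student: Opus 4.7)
The plan is to combine Suslin's computation of $K_3^{\op{Sp}}$ of a field with the vanishing $\op{H}^3_{\op{Nis}}(X, \mathbf{K}^{\op{ind}}_3) = 0$ already cited from \cite[Lemma 3.2.1]{octonion}. The strategy is to show that, as a morphism of Nisnevich sheaves, the forgetful $F\colon \mathbf{KSp}_3 \to \mathbf{K}^{\op{Q}}_3$ factors through the indecomposable $K$-theory sheaf $\mathbf{K}^{\op{ind}}_3$, after which the cited vanishing gives the claim.

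First, I would recall Suslin's theorem on the $K$-theory of fields: for any infinite field $L$ of characteristic $\neq 2$, the forgetful map $K_3^{\op{Sp}}(L) \to K_3(L)$ identifies $K_3^{\op{Sp}}(L)$ with the indecomposable quotient $K_3^{\op{ind}}(L) := K_3(L)/K_3^{\op{M}}(L)$; equivalently, for any choice of splitting of the natural short exact sequence
\[
0 \to K_3^{\op{M}}(L) \to K_3(L) \to K_3^{\op{ind}}(L) \to 0,
\]
the image of the forgetful map lies entirely in the $K_3^{\op{ind}}(L)$-summand. Since $\mathbf{KSp}_3$, $\mathbf{K}^{\op{M}}_3$ and $\mathbf{K}^{\op{Q}}_3$ are all strictly $\mathbb{A}^1$-invariant Nisnevich sheaves in Morel's sense, they are unramified and morphisms between them are determined by the induced maps on sections over finitely generated function fields of smooth $k$-schemes. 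The field-level statement therefore upgrades to a sheaf-level factoring of $F$ through $\mathbf{K}^{\op{ind}}_3$, and applying $\op{H}^3_{\op{Nis}}(X,-)$ produces a composite factoring through $\op{H}^3_{\op{Nis}}(X, \mathbf{K}^{\op{ind}}_3)$, which vanishes by \cite[Lemma 3.2.1]{octonion}.

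The main obstacle is making this sheaf-level factoring rigorous: the short exact sequence defining $\mathbf{K}^{\op{ind}}_3$ need not split as an exact sequence of Nisnevich sheaves, so $\mathbf{K}^{\op{ind}}_3$ is naturally a quotient rather than a subsheaf of $\mathbf{K}^{\op{Q}}_3$, and one has to argue carefully via the unramified structure and a universal example. As a more hands-on alternative, one could work with the Asok--Fasel presentation $\op{Ch}^3(X) \twoheadrightarrow \op{H}^3_{\op{Nis}}(X, \mathbf{KSp}_3)$ recalled just before the lemma: the composite $\op{Ch}^3(X) \to \op{H}^3_{\op{Nis}}(X, \mathbf{KSp}_3) \to \op{CH}^3(X)$ is a natural transformation whose source is $2$-torsion, and one can then deduce its vanishing by a direct universal-example computation informed by Suslin's field-level result.
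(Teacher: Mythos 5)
The gap you flag yourself is fatal to the first strategy, and the proposed alternative does not close it. Since $\mathbf{K}^{\op{ind}}_3$ is the quotient $\mathbf{K}^{\op{Q}}_3/\mathbf{K}^{\op{M}}_3$ and not a subsheaf of $\mathbf{K}^{\op{Q}}_3$, the phrase ``$F$ factors through $\mathbf{K}^{\op{ind}}_3$'' cannot mean what you would need it to mean, namely that $F$ factors through a \emph{subobject} of $\mathbf{K}^{\op{Q}}_3$ with vanishing $\op{H}^3_{\op{Nis}}$. Knowing that the composite $\mathbf{KSp}_3\to\mathbf{K}^{\op{Q}}_3\to\mathbf{K}^{\op{ind}}_3$ is an isomorphism (or surjection, etc.) after passing to field sections would tell you nothing about $\op{H}^3(X,\mathbf{KSp}_3)\to\op{H}^3(X,\mathbf{K}^{\op{Q}}_3)$, because you cannot invert the projection $\mathbf{K}^{\op{Q}}_3\to\mathbf{K}^{\op{ind}}_3$. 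Also, the formulation ``for any choice of splitting, the image of the forgetful map lies in the $K_3^{\op{ind}}$-summand'' is not well posed: the image is a fixed subgroup, while the summand varies with the splitting. The fallback via the Asok--Fasel presentation is just as incomplete: a natural transformation $\op{Ch}^3\to\op{CH}^3$ out of a $2$-torsion group need not vanish, since $\op{CH}^3(X)$ is in general not torsion-free, and ``a direct universal-example computation'' is not an argument.

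What actually makes the lemma work is a much more elementary observation, localized in cohomological degree~$3$ and requiring no input from Suslin. A morphism of strictly $\mathbb{A}^1$-invariant sheaves induces a morphism of Rost--Schmid (Gersten) complexes, whose degree-$p$ term is a direct sum over codimension-$p$ points $x$ of the $p$-fold contraction evaluated on $k(x)$. For $p=3$ one has $(\mathbf{KSp}_3)_{-3}(k(x))\cong\op{GW}^3_0(k(x))\cong\mathbb{Z}/2\mathbb{Z}$ while $(\mathbf{K}^{\op{Q}}_3)_{-3}(k(x))\cong K_0(k(x))\cong\mathbb{Z}$; since $\operatorname{Hom}(\mathbb{Z}/2\mathbb{Z},\mathbb{Z})=0$, the map of complexes is identically zero in degree~$3$, hence so is the induced map on $\op{H}^3$. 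Note this also shows that \emph{any} sheaf morphism $\mathbf{KSp}_3\to\mathbf{K}^{\op{Q}}_3$ dies on $\op{H}^3_{\op{Nis}}$, not just the forgetful one, which is stronger and cleaner than what your route would yield even if made rigorous.
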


\begin{proof}
Note that any morphism $\mathbf{KSp}_3\to\mathbf{K}^{\op{Q}}_3$ will induce a morphism of Gersten complexes, and we can use that to compute the induced morphism on cohomology. A cycle representing a class in $\op{H}^3_{\op{Nis}}(X,\mathbf{KSp}_3)$ will be a finite sum, indexed by codimension $3$ points $x$ of $X$, of elements in $\op{GW}^3_0(k(x))\cong\mathbb{Z}/2\mathbb{Z}$, cf. \cite[Section 4]{AsokFasel}. On the other hand, the degree $3$ cycle group in the Gersten complex for $\mathbf{K}^{\op{Q}}_3$ will be a direct sum of copies of $\mathbb{Z}$ indexed by the codimension $3$ points. The induced map $\op{GW}^3_0(k(x))\to\mathbb{Z}$ must necessarily be the zero map. This shows that any morphism $\mathbf{KSp}_3\to\mathbf{K}^{\op{Q}}_3$ will induce the zero map in degree $3$ Nisnevich cohomology.
\end{proof}

\begin{corollary}
\label{cor:stabso5}
Let $k$ be an infinite perfect field of characteristic unequal to $2$, let $X=\op{Spec}A$ be a smooth affine scheme of dimension $\leq 3$ over $k$. A spin torsor of rank $5$ over $A$ is stably trivial if and only if its second Chern class in $\op{CH}^2(X)$ is trivial. 

In particular, we have two invariants detecting stably trivial spin torsors of rank $5$. The first invariant lives in  $\ker\left(\widetilde{\op{CH}}^2(X)\to\op{CH}^2(X)\right)$. %(which is a quotient of $\op{H}^2_{\op{Nis}}(X,\mathbf{I}^3)$), 
If the first invariant vanishes, then there is a secondary invariant which lives in 
\[
\op{coker}\left(\Omega k_3:\op{H}^1_{\op{Nis}}(X,\mathbf{K}^{\op{MW}}_2)\to \op{H}^3_{\op{Nis}}(X,\mathbf{KSp}_3)\right). 
\]
\end{corollary}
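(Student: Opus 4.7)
The plan is to combine the obstruction-theoretic classification of $\op{Spin}(5)$-torsors from Proposition~\ref{prop:classifyso5} with the stabilization computations immediately preceding the corollary, and to track the two lifting invariants through the chain $\op{Spin}(5) \to \op{Spin}(6) \to \op{Spin}(7)$. The stable range discussion at the start of the section tells me that once we reach $\op{Spin}(7)$ the only remaining classifying invariant is the second Chern class $\op{c}_2 \in \op{CH}^2(X)$, so a $\op{Spin}(5)$-torsor is stably trivial precisely when its image in $\op{H}^1_{\op{Nis}}(X,\op{Spin}(7))$ vanishes. To establish the first claim of the corollary I therefore only need to identify this image in terms of the unstable invariants.

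For the primary lifting class, I would invoke part (i) of the preceding proposition: the stabilization $\op{Spin}(5) \to \op{Spin}(6)$ induces on $\bm{\pi}_2^{\mathbb{A}^1}$ the natural projection $\mathbf{K}^{\op{MW}}_2 \to \mathbf{K}^{\op{M}}_2$, which at the level of $\op{H}^2_{\op{Nis}}$ is the forgetful map $\widetilde{\op{CH}}^2(X) \to \op{CH}^2(X)$ sending the first Pontryagin class $\op{p}_1$ to $\op{c}_2$; a further stabilization to $\op{Spin}(7)$ preserves $\op{c}_2$ since this is the stable invariant. This gives the necessity of $\op{c}_2 = 0$. For sufficiency I would look at the secondary lifting class in $\op{H}^3_{\op{Nis}}(X, \mathbf{KSp}_3)$: by the lemma preceding the corollary, the forgetful morphism $\mathbf{KSp}_3 \to \mathbf{K}^{\op{Q}}_3$ induces the zero map on $\op{H}^3_{\op{Nis}}$, so this class is automatically killed by $\op{Spin}(5) \to \op{Spin}(6)$. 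Further stabilization to $\op{Spin}(7)$ sends $\mathbf{K}^{\op{Q}}_3 \to \mathbf{K}^{\op{ind}}_3$, whose $\op{H}^3_{\op{Nis}}$ vanishes. Both secondary contributions therefore disappear and $\op{c}_2 = 0$ is sufficient.

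For the identification of the two detecting invariants in the second part of the statement, I would read them off directly from the exact sequence of Proposition~\ref{prop:classifyso5}(ii). Stable triviality being equivalent to $\op{c}_2 = 0$, the restriction of $\op{p}_1$ to stably trivial torsors lands in $\ker(\widetilde{\op{CH}}^2(X) \to \op{CH}^2(X))$, and this kernel houses the first invariant. When $\op{p}_1(\xi) = 0$ the exact sequence presents the remaining classification data as a quotient of $\op{H}^3_{\op{Nis}}(X, \mathbf{KSp}_3)$ by the image of $\Omega k_3$, giving the secondary invariant in the named cokernel. The one thing to be careful about is that $[X, {\op{B}}\op{Spin}(5)]$ is only a pointed set, so the identification of the fibre of $\op{p}_1$ over zero with $\op{coker}(\Omega k_3)$ must be read as a consequence of the action/pseudo-torsor structure coming from $\mathbb{A}^1$-obstruction theory (Section~\ref{sec:a1homotopy}); this is the main point to spell out carefully, but it is a routine consequence of the framework once the sequence is set up.
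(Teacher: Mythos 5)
Your proposal is correct and follows the same route the paper intends: combine the obstruction-theoretic description of $[X,{\op{B}}\op{Spin}(5)]$ from Proposition~\ref{prop:classifyso5}, the identification of the stabilization maps on $\bm{\pi}_2^{\mathbb{A}^1}$ and $\bm{\pi}_3^{\mathbb{A}^1}$, the vanishing of the degree-3 map, and the fact that $\op{c}_2$ is the only surviving invariant for $\op{Spin}(n)$, $n\ge 7$, over a scheme of dimension $\le 3$. One small simplification worth noting: for sufficiency you do not actually need the lemma that $\mathbf{KSp}_3\to\mathbf{K}^{\op{Q}}_3$ induces zero on $\op{H}^3_{\op{Nis}}$ — since $\op{H}^3_{\op{Nis}}(X,\mathbf{K}^{\op{ind}}_3)=0$, the image of the torsor in $[X,{\op{B}}\op{Spin}(7)]$ is classified by $\op{c}_2$ alone, so the secondary data is invisible after full stabilization regardless of what happens at the $\op{Spin}(6)$ stage; the lemma is really needed only to understand triviality after adding a single hyperbolic plane, not stable triviality.
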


A direct consequence of this is that for smooth affine surfaces over algebraically closed fields, stably trivial spin torsors of rank $5$ are already trivial because the hypotheses imply $\widetilde{\op{CH}}^2(X)\cong\op{CH}^2(X)$. 

\begin{example}
\label{ex:so5}
We can first consider examples related to quadratic information in the Chow--Witt group. 
Consider the quadric $\op{Q}_4\cong\op{S}^2\wedge\mathbb{G}_{\op{m}}^{\wedge 2}$. We have
\[
\op{H}^2_{\op{Nis}}(\op{Q}_4,\mathbf{K}^{\op{MW}}_2)\cong \op{GW}(k).
\]
The projection map $\mathbf{K}^{\op{MW}}_2\to\mathbf{K}^{\op{M}}_2$ induces the dimension function $\op{GW}(k)\to\mathbb{Z}$ and any element in the kernel will give rise to a stably trivial spin torsor of rank $5$ over $\op{Q}_4$. This provides an algebraic realization and generalization of the $\op{SO}(3,2)$-bundles over $\op{S}^2$ coming from $\pi_2{\op{B}}\op{SO}(3,2)$ which are killed by stabilization to $\op{SO}(3,3)$. 

Consider the quadric $\op{Q}_5\cong\op{S}^2\wedge\mathbb{G}_{\op{m}}^{\wedge 3}$. We have
\[
\op{H}^2_{\op{Nis}}(\op{Q}_5,\mathbf{K}^{\op{MW}}_2)\cong \op{W}(k).
\]
The projection map $\mathbf{K}^{\op{MW}}_2\to\mathbf{K}^{\op{M}}_2$ induces the zero map, in particular any element of $\op{W}(k)$ gives rise to a stably trivial spin torsor of rank $5$ over $\op{Q}_5$. This provides an algebraic realization and generalization of the $\op{SO}(5)$-bundles over $\op{S}^5$ coming from $\pi_5{\op{B}}\op{SO}(5)$. 
\end{example}

\begin{example}
\label{ex:mkp2}
Let $k$ be an algebraically closed field of characteristic $\neq 2$. We can consider the $4$-dimensional smooth affine $k$-scheme $X$ with stably free rank $2$ vector bundle constructed by Mohan Kumar \cite{mohan:kumar}. 
By  \cite[Section 5]{mk}, the rank $2$ vector bundle is detected by a non-trivial class in $\ker\left(\widetilde{\op{CH}}^2(X)\to\op{CH}^2(X)\right)$. By Corollary~\ref{cor:stabso5}, this non-trivial class will correspond to a non-trivial stably trivial spin torsor of rank $5$ over $X$. It can be constructed by taking the stably free rank $2$ module, viewed as a (stably non-trivial) symplectic line bundle, add a trivial symplectic line and view the resulting $\op{Sp}_4$-torsor as spin lift of a quadratic form of rank $5$ via the sporadic isomorphism.
\end{example}

\begin{example}
\label{ex:ksp3}
Interesting examples of stably trivial spin torsors realizing the degree $3$ invariant in $\op{H}^3_{\op{Nis}}(X,\mathbf{KSp}_3)$ can be found over higher-dimensional schemes (but still of $\mathbb{A}^1$-homotopical dimension $3$). For instance, over the base field $k$, we have 
\[
\op{H}^3_{\op{Nis}}(\op{Q}_6,\mathbf{KSp}_3)\cong \op{Ch}^3(\op{Q}_6)\cong\op{H}^3(\op{Q}_6;\mathbf{K}^{\op{M}}_3/2)\cong \mathbb{Z}/2\mathbb{Z}.
\]
If $k$ is algebraically closed, this corresponds to the classical statement  $\pi_6({\op{B}}\op{SO}(5))\cong\mathbb{Z}/2\mathbb{Z}$. Note also that $\op{H}^1_{\op{Nis}}(\op{Q}_6,\mathbf{K}^{\op{MW}}_2)=0$, so these examples are not in the image of the looped Postnikov invariant map $\Omega k_3$.
\end{example}

\subsection{Stably trivial torsors of rank $3$ and $4$}

We begin by identifying the lifting classes of $\op{Spin}(3)$-torsors, via the sporadic isomorphism $\op{Spin}(3)\cong\op{SL}_2$. 

\begin{proposition}
\label{prop:classifyso3}
Let $k$ be an infinite perfect field of characteristic unequal to $2$, let $X=\op{Spec} A$ be a smooth affine scheme over $k$ of dimension $\leq 3$. We have the following statements:
\begin{enumerate}
\item There is an isomorphism $\bm{\pi}_2^{\mathbb{A}^1}{\op{B}}\op{Spin}(3)\cong\mathbf{K}^{\op{MW}}_2$.
% short exact sequence of strictly $\mathbb{A}^1$-invariant sheaves
%\[
%0\to\mathbf{K}^{\op{MW}}_2 \to \bm{\pi}_2^{\mathbb{A}^1}{\op{B}_{\op{Nis}}}\op{PGL}_2\to \mu_2\to 0.
%\]
Consequently, the second lifting class for a  torsor is the Euler class of the corresponding oriented rank $2$ vector bundle in  $\op{H}^2_{\op{Nis}}(X,\mathbf{K}^{\op{MW}}_2)\cong\widetilde{\op{CH}}^2(X)$. 
\item There are short exact sequences of strictly $\mathbb{A}^1$-invariant sheaves
\[
0\to \mathbf{T}_4'\to\bm{\pi}^{\mathbb{A}^1}_3{\op{B}}\op{Spin}(3)\to \mathbf{KSp}_3\to 0, \textrm{ and}
\]
\[
0\to\mathbf{D}_5\to\mathbf{T}_4'\to\mathbf{S}_4'\to 0
\]
where $\mathbf{D}_5$ is a quotient of $\mathbf{I}^5$ and the canonical morphism $\mathbf{K}^{\op{M}}_4/12\to\mathbf{S}_4'$ becomes an isomorphism after $3$-fold contraction. 

There is an exact sequence 
\[
\op{H}^1_{\op{Nis}}(X,\mathbf{K}^{\op{MW}}_2)\xrightarrow{\Omega k_3} \op{H}^3_{\op{Nis}}(X,\bm{\pi}^{\mathbb{A}^1}_3{\op{B}}\op{Spin}(3))\to [X,{\op{B}}\op{Spin}(3)]\xrightarrow{\op{e}}\widetilde{\op{CH}}^2(X)\to 0.
\]
The third lifting class in $\op{H}^3_{\op{Nis}}(X,\bm{\pi}^{\mathbb{A}^1}_3{\op{B}}\op{Spin}(3))$ decomposes into contributions from a class in $\mathbf{D}_5$-cohomology, cf. \cite{AsokFasel}, a motivic cohomology class in $\op{H}^3_{\op{Nis}}(X,\mathbf{K}^{\op{M}}_4/12)$, and  a mod 2 class in 
\[
\op{H}^3_{\op{Nis}}(X,\mathbf{KSp}_3)\cong \op{coker}\left(\op{Ch}^2(X)\xrightarrow{\op{Sq}^2}\op{Ch}^3(X)\right).
\]
\end{enumerate}
\end{proposition}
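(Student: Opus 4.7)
The plan is to invoke the sporadic isomorphism $\op{Spin}(3) \cong \op{SL}_2$ from Proposition~\ref{prop:so3} and then apply the obstruction-theoretic formalism of Section~\ref{sec:a1homotopy} to ${\op{B}}\op{SL}_2$, in complete parallel with the proofs of Propositions~\ref{prop:classifyso6} and~\ref{prop:classifyso5}. For part (1), I would cite Morel's identification $\bm{\pi}_1^{\mathbb{A}^1}(\op{SL}_2) \cong \mathbf{K}^{\op{MW}}_2$, which shifts to $\bm{\pi}_2^{\mathbb{A}^1}({\op{B}}\op{SL}_2) \cong \mathbf{K}^{\op{MW}}_2$. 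Via the standard identification $\op{H}^2_{\op{Nis}}(X, \mathbf{K}^{\op{MW}}_2) \cong \widetilde{\op{CH}}^2(X)$, the second Postnikov obstruction lives in $\widetilde{\op{CH}}^2(X)$ and, by construction, equals the Euler class of the oriented rank-$2$ bundle underlying the given $\op{Spin}(3)$-torsor via the sporadic identification, since the latter is by definition the primary obstruction to trivialization of the corresponding classifying map $X \to {\op{B}}\op{SL}_2$.

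For part (2), the computation of $\bm{\pi}_3^{\mathbb{A}^1}({\op{B}}\op{SL}_2) \cong \bm{\pi}_2^{\mathbb{A}^1}(\op{SL}_2)$ is precisely the Asok--Fasel analysis in \cite{AsokFasel}, which produces the two short exact sequences stated in the proposition, with subsheaf $\mathbf{T}_4'$, quotient $\mathbf{KSp}_3$, and the further filtration of $\mathbf{T}_4'$ by $\mathbf{D}_5$ (a quotient of $\mathbf{I}^5$) and $\mathbf{S}_4'$, together with the identification of $\mathbf{S}_4'$ with $\mathbf{K}^{\op{M}}_4/12$ after three contractions. I would quote this as a black box. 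As in Proposition~\ref{prop:classifyso6}, every lifting class is realizable because the next-stage obstruction would live in $\op{H}^4_{\op{Nis}}$, which vanishes for dimension reasons on a smooth affine $3$-fold; the asserted exact sequence for $[X, {\op{B}}\op{Spin}(3)]$ then falls out directly from the Postnikov fiber sequence of Section~\ref{sec:a1homotopy}, with the Euler-class map $\op{e}$ surjective for the same obstruction-vanishing reason.

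The decomposition of the degree-$3$ lifting class is obtained by running the long exact sequences in Nisnevich cohomology associated to the two short exact sequences of sheaves. The $\mathbf{KSp}_3$-quotient contributes the mod-$2$ piece $\op{H}^3_{\op{Nis}}(X, \mathbf{KSp}_3) \cong \op{coker}(\op{Sq}^2 \colon \op{Ch}^2(X) \to \op{Ch}^3(X))$, via the presentation \cite[Proposition 4.16]{AsokFasel} already recalled before Proposition~\ref{prop:classifyso5}. Inside its kernel sits $\op{H}^3_{\op{Nis}}(X, \mathbf{T}_4')$, which via the second short exact sequence is an extension of a $\mathbf{D}_5$-cohomology class by an $\mathbf{S}_4'$-class. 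The latter is identified with a class in $\op{H}^3_{\op{Nis}}(X, \mathbf{K}^{\op{M}}_4/12)$ because the top-degree Nisnevich cohomology of a strictly $\mathbb{A}^1$-invariant sheaf on a $3$-dimensional smooth scheme is governed by its third contraction via the Gersten resolution, and by hypothesis $\mathbf{K}^{\op{M}}_4/12 \to \mathbf{S}_4'$ is an isomorphism after three contractions.

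The main obstacle I anticipate is the degree-$3$ bookkeeping: one has to combine the two short exact sequences of sheaves into a compatible filtration on $\op{H}^3_{\op{Nis}}$ whose successive quotients are exactly the three pieces stated, and one must justify that the Gersten-resolution argument really does reduce the $\mathbf{S}_4'$-contribution to motivic $\mathbf{K}^{\op{M}}_4/12$-cohomology. Everything else is a direct application of Morel's $\mathbb{A}^1$-homotopy of $\op{SL}_2$, the Asok--Fasel computation of the unstable $\bm{\pi}_2$, and the formalism of Section~\ref{sec:a1homotopy}.
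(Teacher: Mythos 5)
Your proposal matches the paper's proof in substance: both invoke the sporadic isomorphism $\op{Spin}(3)\cong\op{SL}_2$ from Proposition~\ref{prop:so3}, cite Morel for $\bm{\pi}_1^{\mathbb{A}^1}(\op{SL}_2)\cong\mathbf{K}^{\op{MW}}_2$ (hence $\bm{\pi}_2^{\mathbb{A}^1}{\op{B}}\op{SL}_2$), cite Asok--Fasel for the two short exact sequences describing $\bm{\pi}_2^{\mathbb{A}^1}(\op{SL}_2)$ and for the Steenrod-square presentation of $\op{H}^3_{\op{Nis}}(X,\mathbf{KSp}_3)$, and then apply the obstruction-theoretic formalism with the realizability argument from cohomological dimension. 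The paper's proof is simply a terse list of these same citations, so your more detailed bookkeeping is a faithful expansion rather than a different route.
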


\begin{proof}
The results follow from the identification of $\op{Spin}(3)$ with $\op{SL}_2$ in Proposition~\ref{prop:so3} together with computations of $\mathbb{A}^1$-homotopy groups. Point (1) follows from \cite[Theorem 5.39]{MField}, the description in (2) is obtained in \cite[Theorem 3.3, Lemma 7.2]{AsokFasel}. The description of $\op{H}^3_{\op{Nis}}(X;\mathbf{KSp}_3)$ in terms of Steenrod operations is established in \cite[Proposition 4.16]{AsokFasel}. 
\end{proof}

\begin{remark}
Using Proposition~\ref{prop:so3}, the examples of quadratic forms corresponding to the torsors above can be constructed fairly explicitly. If $X=\op{Spec}A$ is a smooth affine scheme and $\mathcal{P}$ is an oriented projective module of rank 2 over $A$, then we can consider its bundle of orientation-preserving automorphisms which is the principal $\op{SL}_2$-bundle over $X$ such that the associated vector bundle for the standard representation is the original module $\mathcal{P}$. If we take the associated vector bundle for the $\op{SL}_2$-representation given by conjugation on trace $0$ matrices in $\op{Mat}_{2\times 2}(k)$, we get the required quadratic form of rank $3$. This sets up a bijection betweeen oriented projective modules of rank $2$ and rationally trivial quadratic forms of rank $3$. Note that the projective modules of rank $2$ can all be obtained by means of the Hartshorne--Serre construction from codimension 2 local complete intersections in $X$.
\end{remark}

\begin{corollary}
\label{cor:spin3}
Let $k$ be an algebraically closed field of characteristic unequal to $2$, and let $X$ be a smooth affine scheme over $k$ of dimension $\leq 3$. Then there is a bijection
\[
\op{H}^1_{\op{Nis}}(X,\op{Spin}(3))\cong \widetilde{\op{CH}}^2(X).
\]
\end{corollary}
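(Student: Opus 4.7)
The plan is to reduce the classification to the obstruction-theoretic exact sequence recorded in Proposition~\ref{prop:classifyso3}(2),
\[
\op{H}^1_{\op{Nis}}(X,\mathbf{K}^{\op{MW}}_2)\xrightarrow{\Omega k_3} \op{H}^3_{\op{Nis}}(X,\bm{\pi}^{\mathbb{A}^1}_3{\op{B}}\op{Spin}(3))\to [X,{\op{B}}\op{Spin}(3)]\xrightarrow{\op{e}}\widetilde{\op{CH}}^2(X)\to 0,
\]
and show that over an algebraically closed field the cokernel of the looped Postnikov map $\Omega k_3$ vanishes. Combined with the representability theorem and the sporadic isomorphism $\op{Spin}(3)\cong\op{SL}_2$ (so that $\op{H}^1_{\op{Nis}}(X,\op{Spin}(3))\cong[X,{\op{B}}\op{Spin}(3)]_{\mathbb{A}^1}$), this forces the Euler class to be a bijection.

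To attack the middle group I would use the two filtration short exact sequences from Proposition~\ref{prop:classifyso3}(2),
\[
0 \to \mathbf{T}_4' \to \bm{\pi}^{\mathbb{A}^1}_3 {\op{B}}\op{Spin}(3) \to \mathbf{KSp}_3 \to 0, \qquad 0 \to \mathbf{D}_5 \to \mathbf{T}_4' \to \mathbf{S}_4' \to 0,
\]
and the associated long exact sequences in Nisnevich cohomology. All $\op{H}^4_{\op{Nis}}$-terms are zero because the Nisnevich cohomological dimension of $X$ equals its Krull dimension, which is at most $3$. The $\mathbf{KSp}_3$-contribution sits inside $\op{coker}(\op{Ch}^2(X)\xrightarrow{\op{Sq}^2}\op{Ch}^3(X))$, and since the top Chow group of a smooth affine variety over an algebraically closed field is uniquely divisible (Bloch/Suslin), we have $\op{Ch}^3(X)=\op{CH}^3(X)/2=0$, so this piece vanishes outright. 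The $\mathbf{S}_4'$-piece can be reached via the given isomorphism $\mathbf{K}^{\op{M}}_4/12\xrightarrow{\sim}\mathbf{S}_4'$ after $3$-fold contraction, which, through the Gersten resolution, identifies $\op{H}^3_{\op{Nis}}(X,\mathbf{S}_4')\cong\op{H}^3_{\op{Nis}}(X,\mathbf{K}^{\op{M}}_4/12)$, and by Rost--Voevodsky the latter is $\op{H}^3_{\et}(X,\mu_{12}^{\otimes 4})$. The $\mathbf{D}_5$-piece, being a quotient of $\mathbf{I}^5$, is controlled by Witt/Arason-type cohomology in degree~$3$.

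The main obstacle is the last step, namely matching the residual contributions from $\mathbf{S}_4'$ and $\mathbf{D}_5$ with the image of $\Omega k_3$. For a smooth affine $3$-fold over an algebraically closed field, $\op{H}^3_{\et}(X,\mu_{12}^{\otimes 4})$ is \emph{not} automatically zero (Artin vanishing only gives vanishing strictly above the dimension), and $\op{H}^3_{\op{Nis}}(X,\mathbf{I}^5)$ is likewise not obviously zero. One therefore has to either invoke extra structural vanishing results for these motivic and Witt-theoretic cohomology groups on smooth affine varieties over algebraically closed fields (relying on the relationship between $\mathbf{I}^j$-cohomology and motivic cohomology established by Asok--Fasel, cf.\ the references in Proposition~\ref{prop:classifyso3}), or exhibit that all such residual classes lie in the image of $\Omega k_3$. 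This last identification---essentially the fact that the looped Postnikov invariant absorbs the whole tertiary obstruction over algebraically closed fields---is the crux of the argument, and the remainder of the proof consists of routine long-exact-sequence bookkeeping.
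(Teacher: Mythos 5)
Your setup is correct and you handle the $\mathbf{KSp}_3$-piece exactly as the paper does (unique divisibility of the top Chow group forces $\op{Ch}^3(X)=0$), but you stop short on the $\mathbf{S}_4'$- and $\mathbf{D}_5$-pieces, and the route you sketch for them is a detour. The paper proves the full third lifting group $\op{H}^3_{\op{Nis}}(X,\bm{\pi}^{\mathbb{A}^1}_3{\op{B}}\op{Spin}(3))$ is zero outright; there is no need to chase the image of $\Omega k_3$.

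For the $\mathbf{S}_4'$-piece, passing to $\op{H}^3_{\et}(X,\mu_{12}^{\otimes 4})$ via Rost--Voevodsky is the wrong lens: compute $\op{H}^3_{\op{Nis}}(X,\mathbf{K}^{\op{M}}_4/12)$ directly from the Gersten complex. For $X$ of Krull dimension $\le 3$, the degree-$3$ term is supported on closed points $x\in X^{(3)}$, whose residue fields are finite extensions of the algebraically closed base field $k$, hence equal to $k$; so the term is a sum of copies of $k^\times/12$, which is zero since $k^\times$ is uniquely divisible. This is what the paper means by ``unique divisibility of the multiplicative group of an algebraically closed field,'' with a pointer to \cite[Proposition 5.4]{AsokFasel}. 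For the $\mathbf{D}_5$-piece, $\mathbf{D}_5$ is a quotient of $\mathbf{I}^5$, and the decisive fact is that the restriction of the sheaf $\mathbf{I}^5$ to any smooth affine $k$-scheme of dimension $\le 4$ is identically zero: the residue fields occurring have $2$-cohomological dimension at most $4$, so by the resolution of the Milnor conjecture $I^5$ of every such field vanishes. Hence $\op{H}^3_{\op{Nis}}(X,\mathbf{D}_5)=0$. With all three pieces zero the exact sequence you cite collapses to the asserted bijection, and the obstruction classes in degree $\ge 4$ vanish for dimension reasons, so the Euler class map is indeed bijective. Your instinct that ``Artin vanishing doesn't suffice'' and that therefore one must absorb residual classes into the image of $\Omega k_3$ is a misdiagnosis: the groups vanish for Gersten-complex and unramified-Witt reasons, not because of étale Artin vanishing.
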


\begin{proof}
Over an algebraically closed field, the top Chow group of a smooth affine scheme is uniquely divisible. In particular,  $\op{Ch}^3(X):=\op{CH}^3(X)/2$ is trivial. This implies that the contribution from $\mathbf{KSp}_3$-cohomology vanishes. The unique divisibility of the multiplicative group of an algebraically closed field implies that the group  $\op{H}^3_{\op{Nis}}(X,\mathbf{S}_4')\cong \op{H}^3_{\op{Nis}}(X,\mathbf{K}^{\op{M}}_4/12)$ is also trivial, cf.  \cite[Proposition 5.4]{AsokFasel}. Finally, the restriction of the sheaf $\mathbf{I}^5$ to a smooth affine scheme of dimension $\leq 4$ over an algebraically closed field is trivial. This implies that the contribution from $\mathbf{D}_5$-cohomology vanishes. The third lifting set $\op{H}^3_{\op{Nis}}(X,\bm{\pi}^{\mathbb{A}^1}_3{\op{B}}\op{Spin}(3))$ is therefore trivial. Since all the higher obstructions vanish because they live above the Nisnevich cohomological dimension of $X$, any lifting class in $\op{H}^2_{\op{Nis}}(X,\bm{\pi}^{\mathbb{A}^1}_2{\op{B}}\op{Spin}(3))$ can be uniquely extended to a map $X\to{\op{B}}\op{Spin}(3)$. The representability theorem~\ref{thm:representability} provides the required bijection between the second lifting set and the isomorphism classes of rank $3$ spin bundles. 
\end{proof}

\begin{example}
\label{ex:so3:1}
Any class in $\widetilde{\op{CH}}^2(X)$ yields a non-trivial spin torsor of rank $3$ over $X$. Particularly interesting in this situation are those in the kernel of the projection $\widetilde{\op{CH}}^2(X)\to\op{CH}^2(X)$. There are examples of such classes over $3$-dimensional smooth affine schemes over fields of the form $\overline{k}(T)$ as well as examples over $4$-dimensional smooth affine schemes over algebraically closed fields as discussed in Example~\ref{ex:mkp2}. 
\end{example}

\begin{example}
\label{ex:so3:2}
Interesting examples of stably trivial torsors realizing the degree $3$ invariants can be found over higher-dimensional schemes (but still of $\mathbb{A}^1$-homotopical resp. Nisnevich cohomological dimension $3$). 

For instance, over the base field $k$, we have
\begin{itemize}
\item $\op{H}^3_{\op{Nis}}(\op{Q}_6,\mathbf{D}_5)$ is a quotient of $\op{H}^3_{\op{Nis}}(\op{Q}_6,\mathbf{I}^5)\cong \op{I}^2(k)$,
\item $\op{H}^3_{\op{Nis}}(\op{Q}_6,\mathbf{K}^{\op{M}}_4/12)\cong k^\times/12$, and 
\item $\op{H}^3_{\op{Nis}}(\op{Q}_6,\mathbf{KSp}_3)\cong \op{Ch}^3(\op{Q}_6)\cong \mathbb{Z}/2\mathbb{Z}$.
\end{itemize}
If $k$ is algebraically closed, the first two of these vanish and the last one corresponds to the classical statement that $\pi_6({\op{B}}\op{SO}(3))\cong\mathbb{Z}/2\mathbb{Z}$. Note that the Nisnevich cohomology long exact sequences associated to the short exact sequences of strictly $\mathbb{A}^1$-invariant sheaves from Proposition~\ref{prop:classifyso3} reduce to short exact sequences because $\op{Q}_6$ has only non-trivial Nisnevich cohomology in degrees $0$ and $3$. So any non-trivial class in the above sets actually gives a non-trivial lifting class in $\op{H}^3_{\op{Nis}}(\op{Q}_6,\bm{\pi}_3^{\mathbb{A}^1}{\op{B}}\op{Spin}(3))$. 

Similarly, for $\op{Q}_7$, we have 
\begin{itemize}
\item $\op{H}^3_{\op{Nis}}(\op{Q}_7,\mathbf{D}_5)$ is a quotient of $\op{H}^3_{\op{Nis}}(\op{Q}_7,\mathbf{I}^5)\cong \op{I}(k)$,
\item $\op{H}^3_{\op{Nis}}(\op{Q}_7,\mathbf{K}^{\op{M}}_4/12)\cong \mathbb{Z}/12$, and 
\item $\op{H}^3_{\op{Nis}}(\op{Q}_7,\mathbf{KSp}_3)\cong 0$ as in the proof of \cite[Lemma 7.3]{AsokFasel}.
\end{itemize}
The second item on the list corresponds to the classical fact that $\pi_7{\op{B}}\op{SO}(3)\cong\mathbb{Z}/12\mathbb{Z}$. 
\end{example}

\begin{example}
\label{ex:so3:3}
One more type of interesting examples related to $\mathbf{K}^{\op{M}}_4/12$ should be mentioned. If $k$ is an algebraically closed field, the construction of Mohan Kumar produces a $4$-dimensional smooth affine scheme $X$ over $k(T)$ which has a non-trivial class in $\op{H}^3_{\op{Nis}}(X,\mathbf{K}^{\op{M}}_4/12)$, detected on $\op{CH}^4(X)/3$. Mohan Kumar produced a stably free module of rank $3$ from this. In \cite{mk}, a variation of Mohan Kumar's construction was shown to produce a stably free module of rank $2$ over $X$ (which stabilizes to Mohan Kumar's example). In our context, this rank $2$ stably free module produces a non-trivial spin torsor of rank $3$ over $X$ (after base change to a finite purely inseparable extension of $k(T)$). Clearing denominators, we find that there are examples of quadratic forms of rank $3$ over $5$-dimensional smooth affine schemes over algebraically closed fields detected in $\op{H}^3_{\op{Nis}}(X,\mathbf{K}^{\op{M}}_4/12)$. 
\end{example}

Now that we have discussed classification and examples of torsors of rank $3$, we turn to the rank $4$ case. 

\begin{proposition}
\label{prop:classifyso4}
Let $k$ be an infinite perfect field of characteristic unequal to $2$, let $X=\op{Spec} A$ be a smooth affine scheme over $k$. There is a bijection
\[
\op{H}^1_{\op{Nis}}(X;\op{Spin}(4))\cong \op{H}^1_{\op{Nis}}(X;\op{Spin}(3))\times \op{H}^1_{\op{Nis}}(X;\op{Spin}(3)).
\]

On the level of lifting classes, the stabilization morphism $\op{Spin}(3)\to\op{Spin}(4)$ is given by the diagonal embedding. In particular, every spin torsor of rank $3$ which becomes trivial as a spin torsor of rank $4$ is already trivial. 
\end{proposition}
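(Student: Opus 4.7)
The plan is to reduce both statements to the sporadic isomorphisms already established in Section~\ref{sec:sporadic}, so no new $\mathbb{A}^1$-homotopical input is needed.

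For the bijection, I would first apply Proposition~\ref{prop:so4} to replace $\op{Spin}(4)$ by $\op{SL}_2 \times \op{SL}_2$, which gives an identification of classifying spaces
\[
{\op{B}}_{\op{Nis}}\op{Spin}(4) \;\cong\; {\op{B}}\op{SL}_2 \times {\op{B}}\op{SL}_2.
\]
Since the functor $\op{H}^1_{\op{Nis}}(X;-)$ carries products of groups to products of pointed sets (a torsor for a product is canonically a pair of torsors, via pushforward along the two projections and pullback along the two inclusions), one gets
\[
\op{H}^1_{\op{Nis}}(X;\op{Spin}(4)) \;\cong\; \op{H}^1_{\op{Nis}}(X;\op{SL}_2) \times \op{H}^1_{\op{Nis}}(X;\op{SL}_2).
\]
Then I would apply the sporadic isomorphism $\op{SL}_2 \cong \op{Spin}(3)$ from Proposition~\ref{prop:so3} to each factor, producing the claimed bijection with $\op{H}^1_{\op{Nis}}(X;\op{Spin}(3))^{\times 2}$.

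For the statement about the stabilization morphism, the key input is Proposition~\ref{prop:stab34}, which asserts that under the two sporadic isomorphisms the inclusion $\op{Spin}(3) \hookrightarrow \op{Spin}(4)$ is conjugate to the diagonal $\Delta \colon \op{SL}_2 \to \op{SL}_2 \times \op{SL}_2$. Functoriality of $\op{H}^1_{\op{Nis}}(X;-)$ therefore shows that stabilization sends an $\op{SL}_2$-torsor $P$ to the pair $(P,P)$ under the product decomposition above. If $(P,P)$ is trivial as a $\op{Spin}(4)$-torsor, then projecting onto either factor shows that $P$ itself is trivial.

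There is no real obstacle: the whole statement is essentially a translation of the diagram in Proposition~\ref{prop:stab34} into the language of non-abelian cohomology, so the only thing to be careful about is to invoke the correct sporadic identifications on both source and target of the stabilization map.
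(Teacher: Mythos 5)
Your proposal is correct and takes essentially the same route as the paper, whose proof is just the one-line remark that the result follows directly from Propositions~\ref{prop:so4} and \ref{prop:stab34}; you simply spell out the implicit steps (that $\op{H}^1_{\op{Nis}}$ of a product group is the product of the $\op{H}^1_{\op{Nis}}$'s, and that triviality of $(P,P)$ forces triviality of $P$ by projecting to a factor).
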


\begin{proof}
This follows directly from Propositions~\ref{prop:so4} and \ref{prop:stab34}. 
\end{proof}

\begin{remark}
\label{rem:test}
It is now straightforward to get examples of rank $4$ torsors by taking pairs of the previously discussed examples \ref{ex:so3:1}, \ref{ex:so3:2} and \ref{ex:so3:3} of rank $3$ torsors. 
\end{remark}

Now we want to discuss the stabilization to $\op{Spin}(5)$. 

\begin{proposition}
\label{prop:stab45pi}
\begin{enumerate}
\item
The map 
\[
\bm{\pi}^{\mathbb{A}^1}_n{\op{B}}\op{SL}_2\times\bm{\pi}^{\mathbb{A}^1}_n {\op{B}}\op{SL}_2\xrightarrow{\cong} \bm{\pi}^{\mathbb{A}^1}_n{\op{B}}\op{Spin}(4)\to \bm{\pi}^{\mathbb{A}^1}_n{\op{B}}\op{Spin}(5)\xrightarrow{\cong} 
\bm{\pi}^{\mathbb{A}^1}_n{\op{B}}\op{Sp}_4
\]
induced by the stabilization morphism $\op{Spin}(4)\to\op{Spin}(5)$  and the sporadic isomorphisms is given by the sum of the stabilization morphisms $\bm{\pi}^{\mathbb{A}^1}_n{\op{B}}\op{SL}_2\to \bm{\pi}^{\mathbb{A}^1}_n{\op{B}}\op{Sp}_4$ in the symplectic series. 
\item
The map 
\[
\bm{\pi}^{\mathbb{A}^1}_n{\op{B}}\op{SL}_2\xrightarrow{\cong} \bm{\pi}^{\mathbb{A}^1}_n{\op{B}}\op{Spin}(3)\to \bm{\pi}^{\mathbb{A}^1}_n{\op{B}}\op{Spin}(5)\xrightarrow{\cong} 
\bm{\pi}^{\mathbb{A}^1}_n{\op{B}}\op{Sp}_4
\]
induced by the stabilization morphism $\op{Spin}(3)\to\op{Spin}(5)$ is twice the morphism induced from stabilization $\op{SL}_2\to\op{Sp}_4$. 
\end{enumerate}
\end{proposition}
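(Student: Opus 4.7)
The plan is to reduce both parts to an additivity-for-commuting-subgroups principle, exploiting the explicit descriptions of the relevant stabilizations already established via the sporadic isomorphisms.

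For part (1), I would first invoke Proposition~\ref{prop:stab45} to identify, via the sporadic isomorphisms, the stabilization $\op{Spin}(4)\to\op{Spin}(5)$ with the long-root embedding $\iota:\op{SL}_2\times\op{SL}_2\hookrightarrow\op{Sp}_4$. Choosing a symplectic basis in which the form decomposes as an orthogonal sum of two hyperbolic planes, $\iota$ becomes the block-diagonal embedding $(A,B)\mapsto\op{diag}(A,B)$. I would then factor $\iota(A,B)=\iota_1(A)\cdot\iota_2(B)$, where $\iota_1(A)=\op{diag}(A,I)$ and $\iota_2(B)=\op{diag}(I,B)$. Each $\iota_j$ is a stabilization morphism $\op{Sp}_2\hookrightarrow\op{Sp}_4$ in the symplectic series, and the two are conjugate inside $\op{Sp}_4$ via the Weyl-group element swapping the two hyperbolic planes; hence inner conjugation shows that $\iota_1$ and $\iota_2$ induce the same map on $\bm{\pi}_n^{\mathbb{A}^1}{\op{B}}\op{Sp}_4$, namely the common symplectic-series stabilization map.

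The crux is then the following additivity principle: given two morphisms of sheaves of groups $\phi_1,\phi_2:H\to G$ whose images commute pointwise, the pointwise product $\phi_1\cdot\phi_2:H\to G$ induces on $\bm{\pi}_n^{\mathbb{A}^1}{\op{B}}(-)$ the sum $(\phi_1)_\ast+(\phi_2)_\ast$ for all $n\geq 1$. I would verify this in the simplicial bar-construction model of ${\op{B}}G$: the commuting hypothesis promotes the pair $({\op{B}}\phi_1,{\op{B}}\phi_2)$ to a well-defined map ${\op{B}}H\times{\op{B}}H\to{\op{B}}G$ recovering ${\op{B}}(\phi_1\cdot\phi_2)$ upon composition with the diagonal, and this map functions as a partial $H$-space structure on ${\op{B}}G$ compatible with the loop-space structure. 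The classical Whitehead-style argument then transcribes without essential change to the simplicial Nisnevich-sheaf setting, delivering additivity on $\mathbb{A}^1$-homotopy sheaves. Applied to $\iota=\iota_1\cdot\iota_2$, together with the conjugacy $(\iota_1)_\ast=(\iota_2)_\ast$, this is exactly the statement of part (1).

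Part (2) will then follow immediately. By Proposition~\ref{prop:stab34}, the stabilization $\op{Spin}(3)\hookrightarrow\op{Spin}(4)$ corresponds under the sporadic isomorphisms to the diagonal $\Delta:\op{SL}_2\to\op{SL}_2\times\op{SL}_2$. Composing with $\iota$ and applying part (1), the induced map on $\bm{\pi}_n^{\mathbb{A}^1}{\op{B}}\op{SL}_2$ sends a class $x$ first to $(x,x)$ and then to the sum of the two stabilization maps evaluated at $x$, i.e. twice the symplectic-series stabilization of $x$. The main obstacle I anticipate is writing down the additivity principle cleanly in the $\mathbb{A}^1$-setting; the content is classical, but transcribing it requires some care with the simplicial sheaf model of ${\op{B}}G$ and the $\mathbb{A}^1$-localization. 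Everything else reduces to bookkeeping with the sporadic identifications already established in Section~\ref{sec:sporadic}.
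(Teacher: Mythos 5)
Your proof takes the same route as the paper: identify the stabilization $\op{Spin}(4)\to\op{Spin}(5)$ with the long-root embedding $\op{SL}_2\times\op{SL}_2\hookrightarrow\op{Sp}_4$ via Proposition~\ref{prop:stab45}, factor it as a product of two commuting block-inclusions, split the induced map on homotopy sheaves additively, and identify the two contributions via Weyl conjugation; part (2) then follows by precomposing with the diagonal, exactly as in the paper using Proposition~\ref{prop:stab34}. Two small remarks on the balance of emphasis. First, the additivity step you justify via a bar-construction argument is actually immediate in the paper's setting and needs no such apparatus: since $\bm{\pi}_n^{\mathbb{A}^1}$ is product-preserving, a class in $\bm{\pi}_n^{\mathbb{A}^1}{\op{B}}(H_1\times H_2)$ decomposes as a product $(\alpha,0)\cdot(0,\beta)$, and additivity is just the statement that the induced map is a homomorphism of homotopy group sheaves. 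Second, and more seriously, the assertion ``inner conjugation shows that $\iota_1$ and $\iota_2$ induce the same map on $\bm{\pi}_n^{\mathbb{A}^1}{\op{B}}\op{Sp}_4$'' is where you are thinner than the paper. In the $\mathbb{A}^1$-setting, conjugation by an element $g\in G(k)$ does not automatically induce the identity on homotopy sheaves; one must exhibit an $\mathbb{A}^1$-homotopy from the constant map at $e$ to the constant map at $g$. The paper supplies this by observing that the standard representative of the Weyl element swapping the two hyperbolic planes in $\op{Sp}_4$ has an explicit elementary factorization, whence a chain of naive $\mathbb{A}^1$-homotopies connects the two long-root embeddings. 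You should include this justification; otherwise the argument is correct and matches the paper's.
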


\begin{proof}
For (1), it is clear that the morphism is the sum of the restrictions to the individual factors. By Proposition~\ref{prop:stab45}, the morphism $\op{SL}_2\times\op{SL}_2\to\op{Sp}_4$ is given by the long-root embedding. In particular, both morphisms are stabilization morphisms; one by adding an identity matrix in the lower right corner, one by adding an identity matrix in the upper left. The first one is the usual stabilization, and it remains to show that the other one is homotopic to the first one. Put differently, we want to show that the two embeddings $\op{SL}_2\to\op{Sp}_4$ via the two choices of long-root embeddings are $\mathbb{A}^1$-homotopic. But one of the long-root embedding is converted into the other by an appropriate conjugation with an element of the Weyl group. The standard representatives of elements of the Weyl group have explicit elementary factorizations (by definition), which provides the required chain of naive $\mathbb{A}^1$-homotopies connecting the two long-root embeddings. This shows (1). 

Statement (2) follows from (1) together with the assertion of Proposition~\ref{prop:stab34} that stabilization of spin groups corresponds to the composition $\op{SL}_2\xrightarrow{\Delta}\op{SL}_2\times\op{SL}_2\to\op{Sp}_4$. The first map induces the diagonal on $\mathbb{A}^1$-homotopy sheaves and the second takes the sum by (1). 
\end{proof}

We recall the effect of the symplectic stabilization $\op{SL}_2\to\op{Sp}_4$ on $\mathbb{A}^1$-homotopy sheaves. 

\begin{proposition}
\label{prop:stabsl2sp4}
\begin{enumerate}
\item The morphism 
\[
\mathbf{K}^{\op{MW}}_2\cong\bm{\pi}^{\mathbb{A}^1}_2{\op{B}}\op{SL}_2\to \bm{\pi}^{\mathbb{A}^1}_2{\op{B}}\op{Sp}_4 \cong\mathbf{K}^{\op{MW}}_2
\]
induced from symplectic stabilization $\op{SL}_2\to\op{Sp}_4$ is the identity, when we identify $\mathbf{K}^{\op{MW}}_2$ of fields with second group cohomology of the discrete groups. 
\item The morphism 
\[
\bm{\pi}^{\mathbb{A}^1}_3{\op{B}}\op{SL}_2\to \bm{\pi}^{\mathbb{A}^1}_3{\op{B}}\op{Sp}_4 \cong\mathbf{KSp}_3
\]
induced from symplectic stabilization $\op{SL}_2\to\op{Sp}_4$ is the natural projection in the exact sequence in point (2) of Proposition~\ref{prop:classifyso3}. 
\end{enumerate}
\end{proposition}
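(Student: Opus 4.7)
The plan is to handle both statements by reducing to the computation on sections over finitely generated field extensions of $k$. Both $\bm{\pi}^{\mathbb{A}^1}_2 \op{B}G$ and $\bm{\pi}^{\mathbb{A}^1}_3 \op{B}G$ (for $G=\op{SL}_2$ and $G=\op{Sp}_4$) are strictly $\mathbb{A}^1$-invariant sheaves, so an equality of two morphisms between them is detected on sections over fields. Over a field $F$, the $\mathbb{A}^1$-Hurewicz-type identifications realize the degree $2$ and $3$ sheaves in question in terms of symbols and symplectic $K$-theory groups, and one wants to use that the stabilization $\op{SL}_2\hookrightarrow\op{Sp}_4$, $A\mapsto\op{diag}(A,I_2)$, is a long-root embedding.

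For part (1), Morel's theorem realizes $\bm{\pi}^{\mathbb{A}^1}_1\op{SL}_2\cong\mathbf{K}^{\op{MW}}_2$ via Steinberg-type symbols $[u,v]$ attached to the unique $\op{SL}_2$ (which is both a long and short root subgroup of itself). The corresponding identification for $\op{Sp}_4$ comes from the presentation of the symplectic Steinberg group by Matsumoto--Moore/Suslin, where the relevant symbols arise from a long-root $\op{SL}_2$-subgroup of $\op{Sp}_4$. Since the stabilization $\op{SL}_2\hookrightarrow\op{Sp}_4$ is by construction the inclusion of a long-root subgroup, the generating symbols on the source map to the generating symbols on the target, which produces the identity map on $\mathbf{K}^{\op{MW}}_2$ after matching conventions. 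This matching is exactly the content of the parenthetical remark in the statement, and an explicit symbol-by-symbol verification is the cleanest way to conclude.

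For part (2), I would appeal directly to the construction of the exact sequence
\[
0\to\mathbf{T}_4'\to\bm{\pi}^{\mathbb{A}^1}_3\op{B}\op{SL}_2\to\mathbf{KSp}_3\to 0
\]
of Proposition~\ref{prop:classifyso3}(2) in \cite{AsokFasel}: inspecting that construction, the surjection onto $\mathbf{KSp}_3$ is defined as the composite of the map induced by symplectic stabilization $\op{SL}_2\to\op{Sp}_\infty$ with the identification $\bm{\pi}^{\mathbb{A}^1}_3\op{B}\op{Sp}_\infty\cong\mathbf{KSp}_3$. The stabilization $\op{Sp}_4\to\op{Sp}_\infty$ induces an isomorphism on $\bm{\pi}^{\mathbb{A}^1}_3$ by the symplectic stability results of \cite{torsors}. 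Consequently, the map in the statement is tautologically the natural projection to $\mathbf{KSp}_3$.

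The principal obstacle is the convention-matching in (1): the identification $\bm{\pi}^{\mathbb{A}^1}_2\op{B}\op{SL}_2(F)\cong\mathbf{K}^{\op{MW}}_2(F)$ of Morel and the corresponding identification for $\op{Sp}_4$ arise from a priori different presentations (the symplectic Steinberg group versus Morel's computation via $\op{SL}_2$), and one has to verify that the sign conventions for the chosen generating symbols match along the long-root inclusion. Once this compatibility is pinned down, both claims follow cleanly, and no further $\mathbb{A}^1$-homotopical computation is required.
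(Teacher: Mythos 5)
Your proposal is correct and takes essentially the same route as the paper: the paper's proof is just the two citations (part (1) to the symplectic stabilization results of \cite{torsors}, part (2) to the computations of \cite{AsokFasel}), and your write-up merely unpacks what those references say, identifying the map in (1) via the long-root/symbol description and observing in (2) that the projection to $\mathbf{KSp}_3$ is by construction the stabilization map.
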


\begin{proof}
The first one follows from the symplectic stabilization results in \cite{torsors}, the second one follows from the computations in \cite{AsokFasel}. 
\end{proof}

\begin{proposition}
\label{prop:stab345}
\begin{enumerate}
\item The morphism 
\[
\mathbf{K}^{\op{MW}}_2\times\mathbf{K}^{\op{MW}}_2\cong\bm{\pi}^{\mathbb{A}^1}_2{\op{B}}\op{Spin}(4)\to \bm{\pi}^{\mathbb{A}^1}_2{\op{B}}\op{Spin}(5) \cong\mathbf{K}^{\op{MW}}_2
\]
induced from orthogonal stabilization $\op{Spin}(4)\to\op{Spin}(5)$ is the sum of the identities on the two factors, when we identify  $\mathbf{K}^{\op{MW}}_2$ of fields with second group cohomology of the discrete groups. 
With this identification,
\[
\mathbf{K}^{\op{MW}}_2\cong\bm{\pi}^{\mathbb{A}^1}_2{\op{B}}\op{Spin}(3)\to \bm{\pi}^{\mathbb{A}^1}_2{\op{B}}\op{Spin}(5) \cong\mathbf{K}^{\op{MW}}_2
\]
induced from orthogonal stabilization $\op{Spin}(3)\to\op{Spin}(5)$ is multiplication by $2$. 
\item The morphism 
\[
\bm{\pi}^{\mathbb{A}^1}_3{\op{B}}\op{SL}_2\times \bm{\pi}^{\mathbb{A}^1}_3{\op{B}}\op{SL}_2\cong \bm{\pi}^{\mathbb{A}^1}_3{\op{B}}\op{Spin}(4)\to \bm{\pi}^{\mathbb{A}^1}_3{\op{B}}\op{Spin}(5) \cong\mathbf{KSp}_3
\]
induced from orthogonal stabilization $\op{Spin}(3)\to\op{Spin}(5)$ is the sum of the natural projection of Proposition~\ref{prop:classifyso3} on each of the two factors. 
Similarly, 
\[
\bm{\pi}^{\mathbb{A}^1}_3{\op{B}}\op{SL}_2\cong\bm{\pi}^{\mathbb{A}^1}_3{\op{B}}\op{Spin}(3)\to \bm{\pi}^{\mathbb{A}^1}_3{\op{B}}\op{Spin}(5) \cong\mathbf{KSp}_3
\]
induced from orthogonal stabilization $\op{Spin}(3)\to\op{Spin}(5)$ is twice  the natural projection of Proposition~\ref{prop:classifyso3} on each of the two factors. 
\end{enumerate}
\end{proposition}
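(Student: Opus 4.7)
The proposition is essentially a direct consequence of the two preceding results, so the plan is to assemble them. Fix $n\in\{2,3\}$. By Proposition~\ref{prop:stab45pi}(1), the composition
\[
\bm{\pi}^{\mathbb{A}^1}_n{\op{B}}\op{SL}_2\times\bm{\pi}^{\mathbb{A}^1}_n{\op{B}}\op{SL}_2\cong\bm{\pi}^{\mathbb{A}^1}_n{\op{B}}\op{Spin}(4)\to\bm{\pi}^{\mathbb{A}^1}_n{\op{B}}\op{Spin}(5)\cong\bm{\pi}^{\mathbb{A}^1}_n{\op{B}}\op{Sp}_4
\]
is the sum of two copies of the map induced by the symplectic stabilization $\op{SL}_2\to\op{Sp}_4$. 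Proposition~\ref{prop:stabsl2sp4}(1) identifies this stabilization on $\bm{\pi}^{\mathbb{A}^1}_2$ with the identity on $\mathbf{K}^{\op{MW}}_2$, which gives the first half of (1). Similarly, Proposition~\ref{prop:stabsl2sp4}(2) identifies it on $\bm{\pi}^{\mathbb{A}^1}_3$ with the canonical projection $\bm{\pi}^{\mathbb{A}^1}_3{\op{B}}\op{SL}_2\twoheadrightarrow\mathbf{KSp}_3$ from the short exact sequence in Proposition~\ref{prop:classifyso3}(2), giving the first half of (2).

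For the statements concerning $\op{Spin}(3)\to\op{Spin}(5)$, one applies Proposition~\ref{prop:stab45pi}(2), which asserts that the induced map on $\bm{\pi}^{\mathbb{A}^1}_n$ is twice the stabilization map $\bm{\pi}^{\mathbb{A}^1}_n{\op{B}}\op{SL}_2\to\bm{\pi}^{\mathbb{A}^1}_n{\op{B}}\op{Sp}_4$. Combining with Proposition~\ref{prop:stabsl2sp4}(1) in degree $2$ yields multiplication by $2$ on $\mathbf{K}^{\op{MW}}_2$, and combining with Proposition~\ref{prop:stabsl2sp4}(2) in degree $3$ yields twice the natural projection, as claimed.

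In fact, the second statements of (1) and (2) can alternatively be deduced from the first statements together with Proposition~\ref{prop:stab34}: the stabilization $\op{Spin}(3)\to\op{Spin}(5)$ factors as $\op{Spin}(3)\to\op{Spin}(4)\to\op{Spin}(5)$, where the first map corresponds under the sporadic isomorphisms to the diagonal $\Delta:\op{SL}_2\to\op{SL}_2\times\op{SL}_2$. Taking $\bm{\pi}^{\mathbb{A}^1}_n$ turns $\Delta$ into the diagonal on $\bm{\pi}^{\mathbb{A}^1}_n{\op{B}}\op{SL}_2$, and post-composing with the sum map obtained in the first halves of (1) and (2) gives exactly the doubled map, providing a consistency check on the statements.

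The only genuinely nontrivial input is Proposition~\ref{prop:stab45pi}, whose proof relied on showing that the two long-root embeddings $\op{SL}_2\hookrightarrow\op{Sp}_4$ are $\mathbb{A}^1$-homotopic via a Weyl-group conjugation; given that input, no obstacle arises in the current argument, which is a mechanical bookkeeping of induced maps on $\mathbb{A}^1$-homotopy sheaves.
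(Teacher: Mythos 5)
Your proof is correct and follows exactly the paper's approach: the paper's own proof is the one-liner ``This is a combination of Propositions~\ref{prop:stab45pi} and \ref{prop:stabsl2sp4},'' and you have simply unpacked that combination degree by degree. The additional consistency check via Proposition~\ref{prop:stab34} is harmless and correct, but not needed.
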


\begin{proof}
This is a combination of Propositions~\ref{prop:stab45pi} and \ref{prop:stabsl2sp4}. 
\end{proof}

\begin{remark}
The above statements can, via complex realization, be compared with the classical statements on the stabilization of the homotopy of the (compact) special orthogonal groups, cf. \cite{steenrod}. Classically, we have the following diagram
\[
\xymatrix{
\pi_3(\op{SO}(3))\ar[r] \ar[d]_\cong & \pi_3(\op{SO}(4)) \ar[r] \ar[d]^\cong & \pi_3(\op{SO}(5))\ar[d]^\cong \\
\mathbb{Z}\ar[r]_\Delta & \mathbb{Z}\oplus\mathbb{Z}\ar[r]_f & \mathbb{Z}.
}
\]
where the map $f$ is given by $(1,1)\mapsto 2$, $(1,0)\mapsto 1$. The first generator is the one given by the image of the generator from $\op{SO}(3)$ (i.e. it is realized by the conjugation action of the unit quaternions on themselves), the second generator is given by left multiplication of unit quaternions on all quaternions. 

The above computations reproduce exactly this picture. Actually, the development of the sporadic isomorphisms in Section~\ref{sec:sporadic} can be used to reprove the classical statements in a different manner with significantly less homotopical arguments. Over $\mathbb{C}$, the sequence
\[
[\op{Q}_4,{\op{B}}\op{SO}(3)]_{\mathbb{A}^1}\to [\op{Q}_4,{\op{B}}\op{SO}(4)]_{\mathbb{A}^1}\to [\op{Q}_4,{\op{B}}\op{SO}(5)]_{\mathbb{A}^1}
\]
reproduces exactly the classical sequence above by noting that $\op{H}^2_{\op{Nis}}(\op{Q}_4,\mathbf{K}^{\op{MW}}_2)\cong \op{GW}(k)$. The classical description of the generators $\op{Q}_3\to\op{SL}_2$ and $\op{Q}_3\to\op{SL}_2\times\op{SL}_2$ also follows from the statements in Section~\ref{sec:sporadic}.
\end{remark}

We now have all the homotopical information to discuss stabilization of torsors of rank $3$ and $4$ and provide some examples. 

\begin{proposition}
Let $k$ be a field of characteristic $\neq 2$ and let $X$ be a smooth affine scheme of dimension $\leq 3$ over $k$. 
\begin{enumerate}
\item A spin torsor of rank $3$ over $X$ is stably trivial if and only if its lifting class in $\widetilde{\op{CH}}^2(X)$ has $2$-torsion image in $\op{CH}^2(X)$. 
\item A spin torsor of rank $4$ over $X$ classified by $(\gamma,\delta)\in\widetilde{\op{CH}}^2(X)\times \widetilde{\op{CH}}^2(X)$ is stably trivial if and only if the class $\gamma+\delta$ has trivial image in $\op{CH}^2(X)$. 
\end{enumerate}
\end{proposition}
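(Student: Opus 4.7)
The plan is to reduce both parts to Corollary~\ref{cor:stabso5}, which identifies the second Chern class in $\op{CH}^2(X)$ as the unique invariant detecting stable triviality of a $\op{Spin}(5)$-torsor. Since orthogonal stabilization is functorial and the map $\op{Spin}(n)\to\op{Spin}(N)$ for $N$ large factors through $\op{Spin}(5)$, a $\op{Spin}(3)$- or $\op{Spin}(4)$-torsor is stably trivial if and only if its image under the stabilization to $\op{Spin}(5)$ has vanishing second Chern class in $\op{CH}^2(X)$. So everything reduces to identifying this second Chern class in terms of the original lifting data.

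The next step is to use Proposition~\ref{prop:stab345} to read off the first lifting class of the stabilized $\op{Spin}(5)$-torsor. For~(1) the map $\mathbf{K}^{\op{MW}}_2\cong\bm{\pi}_2^{\mathbb{A}^1}{\op{B}}\op{Spin}(3)\to\bm{\pi}_2^{\mathbb{A}^1}{\op{B}}\op{Spin}(5)\cong\mathbf{K}^{\op{MW}}_2$ induced by stabilization is multiplication by $2$, which on $\op{H}^2_{\op{Nis}}(X,-)$ becomes multiplication by $2$ on $\widetilde{\op{CH}}^2(X)$; thus the first lifting class of the stabilized torsor is $2c$, where $c$ is the original class. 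For~(2) the analogous map $\mathbf{K}^{\op{MW}}_2\times\mathbf{K}^{\op{MW}}_2\to\mathbf{K}^{\op{MW}}_2$ is the sum, so the first lifting class of the stabilized torsor is $\gamma+\delta$.

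Finally, to pass from the first lifting class of a $\op{Spin}(5)$-torsor to its second Chern class, one uses the further stabilization $\op{Spin}(5)\to\op{Spin}(6)$: as recorded in the discussion preceding Corollary~\ref{cor:stabso5}, the induced map on $\bm{\pi}_2^{\mathbb{A}^1}$ is the natural projection $\mathbf{K}^{\op{MW}}_2\to\mathbf{K}^{\op{M}}_2$, which on $\op{H}^2_{\op{Nis}}$ is the canonical map $\widetilde{\op{CH}}^2(X)\to\op{CH}^2(X)$. Hence the second Chern class of the $\op{Spin}(5)$-torsor is the image in $\op{CH}^2(X)$ of its first lifting class. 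Combining: in case~(1) the second Chern class of the stabilized torsor is $2\bar{c}$, which vanishes iff $\bar{c}$ is $2$-torsion; in case~(2) it is the image of $\gamma+\delta$, which vanishes iff $\gamma+\delta$ maps to $0$ in $\op{CH}^2(X)$. This yields the claimed criteria.

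No serious obstacle is expected, as the proof is essentially a bookkeeping exercise chaining the previously established stabilization formulas from Proposition~\ref{prop:stab345} with Corollary~\ref{cor:stabso5}. The step requiring the most care is the identification of the second Chern class of a $\op{Spin}(5)$-torsor with the image of its first lifting class under $\widetilde{\op{CH}}^2(X)\to\op{CH}^2(X)$, but this is precisely the input used implicitly in Corollary~\ref{cor:stabso5}, and so nothing genuinely new has to be proved.
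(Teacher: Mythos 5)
Your proof is correct and takes essentially the same route as the paper: the paper's proof simply cites Corollaries~\ref{cor:spin3} and~\ref{cor:stabso5} together with Propositions~\ref{prop:classifyso4} and~\ref{prop:stab345}, and your writeup is precisely the bookkeeping that spells out how those ingredients combine. The only small omission is that you do not explicitly note the functoriality of Postnikov towers, which is what guarantees that the second lifting class of the stabilized $\op{Spin}(5)$-torsor is indeed the image of the original second lifting class under the map on $\bm{\pi}_2^{\mathbb{A}^1}$; but this is the implicit content of Proposition~\ref{prop:stab345} and is a standard point, so it is not a genuine gap.
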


\begin{proof}
Follows directly from Corollaries~\ref{cor:spin3} and \ref{cor:stabso5} as well as Propositions~\ref{prop:classifyso4} and \ref{prop:stab345}. 
\end{proof}

We therefore get the following examples of stably trivial spin torsors of ranks $3$ and $4$ related to the lifting class in the second Chow--Witt group.

\begin{example}
Let $k$ be a field and let $X$ be a scheme of dimension $\leq 3$ with a non-trivial class $\alpha\in\widetilde{\op{CH}}^2(X)$. Then $(\alpha,-\alpha)\in\op{H}^2_{\op{Nis}}(X,\bm{\pi}^{\mathbb{A}^1}_2{\op{B}}\op{Spin}(4))$ gives a non-trivial stably trivial torsor of rank $4$ over $X$. 

More complicated examples of stably trivial torsors of rank $3$ or $4$ arising from the kernel of $\widetilde{\op{CH}}^2(X)\to\op{CH}^2(X)$ can be manufactured as in Example~\ref{ex:mkp2}. 

Finally, we can get examples related to the prime $2$. Over $\mathbb{R}$, the complement $X$ of the conic $U^2+V^2+W^2=0$ in $\mathbb{P}^2$ is a smooth affine scheme with $\op{CH}^2(X)\cong\mathbb{Z}/2\mathbb{Z}$. We can lift the class of the $k$-rational point along $\widetilde{\op{CH}}^2(X)\twoheadrightarrow\op{CH}^2(X)$ and consider the torsor of rank $3$ associated to this element, viewed as lifting class in $\op{H}^2_{\op{Nis}}(X,\bm{\pi}^{\mathbb{A}^1}_2{\op{B}}\op{Spin}(3))$. The resulting bundle will be non-trivial but stably trivial because its image in the lifting set $\op{H}^2_{\op{Nis}}(X,\bm{\pi}^{\mathbb{A}^1}_2{\op{B}}\op{Spin}(6))\cong\op{CH}^2(X)$ will be twice the generator, by the stabilization results above. 
\end{example}

\begin{example}
Any combination of the examples of quadratic forms related to degree $3$ invariants, cf. Examples~\ref{ex:so3:2} and \ref{ex:so3:3}, will result in a stably trivial torsor because the degree $3$ lifting classes are not stably visible. However, torsors of rank $3$ over smooth affine schemes $X$ of homotopical dimension $3$ and with trivial characteristic class in $\widetilde{\op{CH}}^2(X)$ will already become hyperbolic by adding a single hyperbolic plane. Any lifting class not related to $\mathbf{KSp}_3$-cohomology will be invisible anyway by Proposition~\ref{prop:stab345}. On the other hand, the $\mathbf{KSp}_3$-class after stabilization by a hyperbolic plane will be twice the projection of the class of the associated projective rank $2$ module to $\op{H}^3_{\op{Nis}}(X,\mathbf{KSp}_3)$. But the latter is $2$-torsion.
\end{example}

\section{Spin torsors vs quadratic forms}
\label{sec:quad}

Now that we have studied in detail the classification of stably trivial spin torsors, we investigate the relation between spin torsors and quadratic forms. The main point is that the classification of rationally hyperbolic quadratic forms can be obtained from the classification of spin torsors by dividing out the action of the fundamental group $\bm{\pi}_1^{\mathbb{A}^1}{\op{B}}_{\op{Nis}}\op{SO}(n)$. In particular, most of the stably trivial spin torsors discussed in Section~\ref{sec:dim3} provide examples of non-trivial stably trivial quadratic forms.

\subsection{Quadratic forms vs spin torsors}
At this point, we aim to say something about the group $\op{H}^1_{\op{Nis}}(X,\op{SO}(n))$ of rationally trivial $\op{SO}(n)$-torsors over smooth affine schemes $X$. To this end, we want to discuss the relation between the two classification problems induced by the natural map 
\[
[X,{\op{B}}_{\op{Nis}}\op{Spin}(n)]_{\mathbb{A}^1}\to [X,{\op{B}}_{\op{Nis}}\op{SO}(n)]_{\mathbb{A}^1}.
\]
The spin cover $\op{Spin}(n)\to\op{SO}(n)$ is a finite \'etale (even Galois) map of degree $2$ (over a field of characteristic $\neq 2$) and therefore, by \cite[Lemma 6.5]{MField}, it is an $\mathbb{A}^1$-covering space. On the level of homotopy groups, this implies the following statements: first, there are induced isomorphisms
\[
\bm{\pi}^{\mathbb{A}^1}_i\op{Spin}(n)\cong \bm{\pi}^{\mathbb{A}^1}_i\op{SO}(n)
\]
for $i\geq 2$. Furthermore, since the group $\op{Spin}(n)$ is generated by unipotent matrices, it is $\mathbb{A}^1$-connected and there is an extension
\[
1\to \bm{\pi}^{\mathbb{A}^1}_1\op{Spin}(n)\to \bm{\pi}^{\mathbb{A}^1}_1\op{SO}(n)\to \mu_2\to 0.
\]
Finally, the spin covering induces a bijection of pointed sets
\[
\bm{\pi}^{\mathbb{A}^1}_0\op{SO}(n)\cong \mathscr{H}^1_{\et}(\mu_2),
\]
where the target denotes the Nisnevich sheafification associated to the presheaf $X\mapsto \op{H}^1_{\et}(X,\mu_2)$. Similar statements for the groups $\op{PGL}_n$ (in particular for $\op{SO}(3)$) have been discussed in \cite[Section 3]{AKW15}. All in all, we have an $\mathbb{A}^1$-fiber sequence
\[
{\op{B}}_{\et}\mu_2\to{\op{B}}_{\op{Nis}}\op{Spin}(n)\to {\op{B}}_{\op{Nis}}\op{SO}(n),
\]
which is the restriction of a similar fiber sequence for \'etale classifying spaces along the natural map ${\op{B}}_{\op{Nis}}\op{SO}(n)\to {\op{B}}_{\et}\op{SO}(n)$. 

As in \cite[Section 3]{AKW15}, if $X$ is a smooth affine scheme, mapping $X$ to the above $\mathbb{A}^1$-fiber sequence yields an exact sequence of groups and pointed sets
\[
[X,\op{SO}(n)]_{\mathbb{A}^1}\to \op{H}^1_{\et}(X,\mu_2)\to [X,{\op{B}}_{\op{Nis}}\op{Spin}(n)]_{\mathbb{A}^1}\to [X,{\op{B}}_{\op{Nis}}\op{SO}(n)]_{\mathbb{A}^1}.
\]
We first want know when the last map is a surjection. Using the relative obstruction theory, a map $X=\op{Spec}A \to{\op{B}}_{\op{Nis}}\op{SO}(n)$ classifying a rationally hyperbolic quadratic form on $A$ has a spin lift if the sequence of obstruction classes in $\op{H}^{i+1}_{\op{Nis}}(X,\bm{\pi}^{\mathbb{A}^1}_i{\op{B}}_{\et}\mu_2)$ vanishes. At each stage, there is a choice of lifts  given by $\op{H}^{i}_{\op{Nis}}(X,\bm{\pi}^{\mathbb{A}^1}_i{\op{B}}_{\et}\mu_2)$. Note that the only non-trivial homotopy groups of ${\op{B}}_{\et}\mu_2$ are 
\[
\bm{\pi}^{\mathbb{A}^1}_1{\op{B}}_{\et}\mu_2\cong\mu_2, \textrm{ and } \bm{\pi}^{\mathbb{A}^1}_0{\op{B}}_{\et}\cong\mathscr{H}^1_{\et}(\mu_2).
\]
Since $\op{H}^i_{\op{Nis}}(X,\mu_2)$ is trivial for any smooth affine scheme $X$ and $i\geq 1$, the only relevant obstruction and lifting groups are those associated to $\mathscr{H}^1_{\et}(\mu_2)$. In particular, a spin lift exists if the obstruction in $\op{H}^1_{\op{Nis}}(X,\mathscr{H}^1_{\et}(\mu_2))$ vanishes. One could consider this obstruction class as an algebraic version of the second Stiefel--Whitney class $\op{w}_2$ of the quadratic form. In general, there is no reason for the group $\op{H}^1_{\op{Nis}}(X,\mathscr{H}^1_{\et}(\mu_2))$ to vanish, for instance we have $\op{H}^1_{\op{Nis}}(\op{Q}_2,\mathscr{H}^1_{\et}(\mu_2))\cong\mu_2$. However, the stabilization morphisms $\op{SO}(n)\to\op{SO}(n+1)$ induce isomorphisms on connected components for all $n\geq 3$ and therefore the obstruction class in $\op{H}^1_{\op{Nis}}(X,\mathscr{H}^1_{\et}(\mu_2))$ is a stable invariant. In particular, it vanishes for stably trivial quadratic forms, meaning that stably trivial quadratic forms will always have spin lifts. In the remainder of the section, we will concentrate on rationally hyperbolic quadratic forms admitting a spin lift. %In principle, the obstruction theory can also deal with ${\op{B}}_{\op{Nis}}\op{SO}(n)$, but that requires computing $\mathscr{H}^1_{\et}(\mu_2)$-equivariant Nisnevich cohomology of the higher homotopy of ${\op{B}}_{\op{Nis}}\op{SO}(n)$ which is beyond the scope of the present paper (and not relevant for the classification of stably hyperbolic quadratic forms). 

At this point, the exactness of the sequence of groups and pointed sets at the point $[X,{\op{B}}_{\op{Nis}}\op{Spin}(n)]_{\mathbb{A}^1}$ means that the image of the map $[X,{\op{B}}_{\op{Nis}}\op{Spin}(n)]_{\mathbb{A}^1}\to [X,{\op{B}}_{\op{Nis}}\op{SO}(n)]_{\mathbb{A}^1}$ of pointed sets is given by the orbit set
\[
[X,{\op{B}}_{\op{Nis}}\op{Spin}(n)]/\op{H}^1_{\et}(X,\mu_2).
\]
Reformulated, the isomorphism classes of rationally hyperbolic quadratic forms of rank $n$ over $X$ which admit a spin lift are given as equivalence classes of $\op{Spin}(n)$-torsors by the action of the degree 2 covers of $X$. 

\subsection{Action of line bundles}
\label{sec:action}
We now want to make the action of $\op{H}^1_{\et}(X,\mu_2)$  more precise to be able to compute its orbits on the isomorphism classes of spin torsors.

First we consider the cases of rank $3$ and $4$ and describe the action of degree 2 covers based on the sporadic isomorphisms. Using the sporadic isomorphism $\op{Spin}(3)\cong\op{SL}_2$ from Proposition~\ref{prop:so3}, we find that the spin covering $\op{Spin}(3)\to\op{SO}(3)$ can be described as the degree 2 covering $\op{SL}_2\to\op{PGL}_2$. In \cite[Section 3]{AKW15}, the identification 
\[
[X,{\op{B}}\op{GL}_2]/\op{H}^1_{\op{Nis}}(X,\mathbb{G}_{\op{m}})\cong [X,{\op{B}}_{\op{Nis}}\op{PGL}_2]
\]
was described explicitly: the action of $\op{H}^1_{\op{Nis}}(X,\mathbb{G}_{\op{m}})\cong\op{Pic}(X)$ on $[X,{\op{B}}\op{GL}_2]$ is given by twisting the rank 2 vector bundles by line bundles, and the $\op{PGL}_2$-torsors are then orbits of rank 2 vector bundles by twists with line bundles. Restricting this to $\op{SL}_2$-torsors, we get a description of the action of $\op{H}^1_{\et}(X,\mu_2)$ on $\op{Spin}(3)$-torsors; the cocartesian square
\[
\xymatrix{
\mu_2\ar[r] \ar[d] & \mathbb{G}_{\op{m}}\ar[d] \\
\op{SL}_2\ar[r] & \op{GL}_2
}
\]
implies that the action of $\op{H}^1_{\et}(X,\mu_2)$ on $[X,{\op{B}}\op{SL}_2]$ factors through the quotient 
\[
\op{H}^1_{\et}(X,\mu_2)\twoheadrightarrow \ker\left(2:\op{Pic}(X)\to\op{Pic}(X)\right)
\] 
associated to the sequence $1\to\mu_2\to\mathbb{G}_{\op{m}}\xrightarrow{2}\mathbb{G}_{\op{m}}\to 1$. Explicitly, an element of $\op{H}^1_{\et}(X,\mu_2)$ acts on $[X,{\op{B}}\op{SL}_2]$ by twisting with the associated $2$-torsion line bundle. Similarly, the action of $\op{H}^1_{\et}(X,\mu_2)$ on $\op{Spin}(4)$-torsors is given by using the sporadic isomorphism $\op{Spin}(4)\cong\op{SL}_2\times\op{SL}_2$: an element of $\op{H}^1_{\et}(X,\mu_2)$ acts on $[X,{\op{B}}\op{SL}_2\times{\op{B}}\op{SL}_2]$ by twisting both rank 2 bundles simultaneously with the associated $2$-torsion line bundle. 

A similar argument can be made for the other two sporadic cases. In rank $6$, we have a sequence of degree 2 coverings $\op{SL}_4\cong\op{Spin}(6)\to \op{SO}(6)\to \op{PGL}_4$. As in \cite[Section 3]{AKW15}, we can identify the action of line bundles on $[X,{\op{B}}\op{GL}_4]$ as the twisting. As done above in rank 3, this implies that the action of $\op{H}^1_{\et}(X,\mu_2)$ on $[X,{\op{B}}_{\op{Nis}}\op{Spin}(6)]$ is given by twists with $2$-torsion line bundles. Under the sporadic isomorphisms, the restriction of this action to $\op{Sp}_4\cong\op{Spin}(5)$ deals with the remaining case.

\subsection{Result and examples concerning quadratic forms}
After all the preparations, we are now ready to discuss the isomorphism classes of stably trivial quadratic forms, or more generally rationally hyperbolic quadratic forms which admit a spin lift. We know that the latter are given by the orbit set $[X,{\op{B}}_{\op{Nis}}\op{Spin}(n)]/\op{H}^1_{\et}(X,\mu_2)$, where the action is given by twists with $2$-torsion line bundles. It remains to revisit the general classification results and specific examples from Section~\ref{sec:dim3} to see what these results say about stably trivial quadratic forms. 

First, there is a generic remark. Having identified that the action of $\op{H}^1_{\et}(X,\mu_2)$ factors through an action of ${}_2\op{Pic}(X)$, there is a number of cases, relevant for our examples, in which the action will be trivial. Our main examples of rationally or stably trivial spin torsors in Section~\ref{sec:dim3} lived over smooth affine quadrics $\op{Q}_n$ or varieties constructed by Mohan Kumar in \cite{mohan:kumar}. For $n\geq 3$, we  have $\op{CH}^1(\op{Q}_n)/2=0$. The examples of Mohan Kumar for odd primes $p$ are open subvarieties of a hypersurface complement $\mathbb{P}^{p+1}\setminus Z$ where $Z$ has degree a power of $p$. In particular, these will also have  trivial $\op{CH}^1/2$. In these cases, the classification of rationally hyperbolic forms admitting a spin lift agrees with the classification of $\op{Spin}(n)$-torsors. 

\begin{example}
The above remark applies to Examples~\ref{ex:so5}, \ref{ex:ksp3}, \ref{ex:so3:2} and \ref{ex:so3:3}. In all these cases, we get examples of non-trivial stably trivial quadratic forms.
\end{example}

We consider the classification of rationally trivial quadratic forms of rank $6$. 
\begin{proposition}
\label{prop:nso6}
Let $k$ be an infinite perfect field of characteristic unequal to $2$, let $X=\op{Spec}A$ be a smooth  affine scheme over $k$ of dimension $\leq 3$. The action of $\op{H}^1_{\et}(X,\mu_2)$ on the lifting classes for $\op{Spin}(6)$-torsors is induced from the standard action of $\op{Pic}(X)$ on the Chern classes of vector bundles: 
\begin{eqnarray*}
\op{c}_2&\mapsto& \op{c}_2+6\ell^2\\
\op{c}_3&\mapsto& \op{c}_3+4\ell^3
\end{eqnarray*}
In particular, rationally hyperbolic quadratic forms of rank $6$ over $X$ are given by orbits of oriented rank 4 vector bundles modulo twists by $2$-torsion line bundles. Such a form is stably trivial if the image of its second Chern class in $\op{CH}^2(X)/{}_2\op{Pic}(X)$ equals the orbit of the trivial bundle. 
\end{proposition}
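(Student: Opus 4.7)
The plan is to exploit the sporadic isomorphism $\op{Spin}(6) \cong \op{SL}_4$ from Proposition~\ref{prop:so6} to transfer everything to the classification of oriented rank $4$ vector bundles, and then to identify the action of $\op{H}^1_{\et}(X,\mu_2)$ under this identification using the analysis in Section~\ref{sec:action}. The cocartesian square relating $\mu_2 \hookrightarrow \mathbb{G}_{\op{m}}$ and $\op{SL}_4 \hookrightarrow \op{GL}_4$ together with the twisting description of the $\op{Pic}(X)$-action on $[X,{\op{B}}\op{GL}_4]$ shows, exactly as in the rank $3$ case treated in Section~\ref{sec:action}, that the action of $\op{H}^1_{\et}(X,\mu_2)$ on $[X,{\op{B}}\op{SL}_4]$ factors through the homomorphism $\op{H}^1_{\et}(X,\mu_2) \twoheadrightarrow {}_2\op{Pic}(X)$ coming from the Kummer sequence, and is given by $\mathscr{P} \mapsto \mathscr{P} \otimes L$ for the associated $2$-torsion line bundle $L$.

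The Chern class formulas then follow from the splitting principle: for a rank $4$ bundle $\mathscr{P}$ with $c_1(\mathscr{P})=0$ and a line bundle $L$ with $\ell = c_1(L)$ one has
\[
c_k(\mathscr{P} \otimes L) = \sum_{j=0}^k \binom{4-j}{k-j} c_j(\mathscr{P}) \ell^{k-j}.
\]
Specializing to $k = 2, 3$ gives $c_2 + 6\ell^2$ and $c_3 + 2 c_2 \ell + 4\ell^3$. When $L$ is $2$-torsion the cross term $2 c_2 \ell$ vanishes by bilinearity of the intersection product (since $2\ell = 0$ in $\op{CH}^1(X)$), which yields the stated formulas. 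This also confirms that $\mathscr{P} \otimes L$ is still oriented, since $c_1(\mathscr{P}\otimes L) = 4\ell = 0$.

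For the statement about rationally hyperbolic forms of rank $6$: by the discussion at the end of Section~6.1, the image of $[X,{\op{B}}_{\op{Nis}}\op{Spin}(6)]_{\mathbb{A}^1} \to [X,{\op{B}}_{\op{Nis}}\op{SO}(6)]_{\mathbb{A}^1}$ is the orbit set $[X,{\op{B}}_{\op{Nis}}\op{Spin}(6)]/\op{H}^1_{\et}(X,\mu_2)$, and this image consists exactly of the rationally hyperbolic forms admitting a spin lift. Combined with the previous paragraph and the sporadic isomorphism, such forms correspond to oriented rank $4$ vector bundles modulo twists by $2$-torsion line bundles. For stable triviality, we combine this with the corollary to Proposition~\ref{prop:classifyso6} which identifies the second Chern class as the only stable invariant of a $\op{Spin}(6)$-torsor on $X$: a form is stably trivial precisely when its underlying spin torsor can be chosen (within its $\op{H}^1_{\et}(X,\mu_2)$-orbit) to have vanishing $c_2$, i.e.\ when its second Chern class orbit in $\op{CH}^2(X)/{}_2\op{Pic}(X)$ is the orbit of $0$.

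The main obstacle I expect is bookkeeping rather than technical depth: one has to verify that the identification of the $\op{H}^1_{\et}(X,\mu_2)$-action with twisting by $2$-torsion line bundles really is the same as the action on lifting classes coming from the $\mathbb{A}^1$-fiber sequence ${\op{B}}_{\et}\mu_2 \to {\op{B}}_{\op{Nis}}\op{Spin}(6) \to {\op{B}}_{\op{Nis}}\op{SO}(6)$, and to track the compatibility between the quotient description of rationally hyperbolic forms and the second Chern class invariant under stabilization. A minor subtlety worth checking is that, for $2$-torsion $\ell$, both $6\ell^2$ and $4\ell^3$ are already $2$-torsion in the respective Chow groups, so the quotient $\op{CH}^2(X)/{}_2\op{Pic}(X)$ might collapse in many cases of interest, which should be consistent with the examples discussed in Section~\ref{sec:dim3}.
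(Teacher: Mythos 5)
Your proposal is correct and follows essentially the same route as the paper's own proof: transfer to $\op{SL}_4$-torsors via the sporadic isomorphism, identify the $\op{H}^1_{\et}(X,\mu_2)$-action as twisting by $2$-torsion line bundles (via the Kummer sequence argument of Section~\ref{sec:action}), and read off the Chern class formulas from the standard tensor-product formula, simplifying using $c_1 = 0$ and $2\ell = 0$. The paper phrases the Chern computation via a Serre-splitting remark but writes down the same general rank-$4$ formulas you use, and you are in fact slightly more careful than the paper in spelling out the ``admitting a spin lift'' caveat when describing the orbit set.
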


\begin{proof}
The statements about the action being given by twist with a line bundle follow from the previous discussion in Section~\ref{sec:action}. An oriented rank 4 vector bundle is a direct sum of an oriented rank 3 bundle and a trivial line, by Serre's splitting and the dimension assumption. The following formulas above reflect what happens to the Chern classes of such a bundle under twist with a line bundle of class $\ell$: 
\begin{eqnarray*}
\op{c}_2&\mapsto& \op{c}_2+3\op{c}_1\ell+6\ell^2\\
\op{c}_3&\mapsto& \op{c}_3+2\op{c}_2\ell+3\op{c}_1\ell^2+4\ell^3
\end{eqnarray*}
The term $2\op{c}_2\ell$ can be omitted because $\ell$ is $2$-torsion in our case. The contributions related to $\op{c}_1$ vanish because we have oriented bundles.
\end{proof}

\begin{example}
In the case of Example~\ref{ex:so6}, we have $\op{Pic}(X)=\op{Pic}(\overline{X})/[\overline{X}\setminus X]$. Now $\op{Pic}(\overline{X})$ is an extension of $\op{Pic}^0(\overline{X})$ and $\op{NS}(X)$. The latter is finitely generated and the former is an abelian variety. In particular, the $2$-torsion in $\op{Pic}(X)$ is finite. Consequently, Example~\ref{ex:so6} provides uncountably many isomorphism classes of stably trivial quadratic forms of rank $6$. 
\end{example}

\begin{remark}
\label{rem:so6hyp}
The composition $\op{SL}_3\to\op{SL}_4\to\op{SO}(6)$ of the stabilization and the sporadic isogeny is the hyperbolic morphism, cf. \cite[Propositions 2.3.1 and 2.3.4]{octonion}. By dimension reasons, for a scheme $X$ of dimension $\leq 3$, every quadratic bundle on $X$ is the hyperbolic bundle of some rank $3$ oriented vector bundle. In particular, the examples considered above are not only stably hyperbolic, they are all hyperbolic. 
\end{remark}

The technique above applies more generally. If the variety $X$ arises as hypersurface complement in $\mathbb{P}^n$, then the $2$-torsion in the Picard group will be finite. If we find ourselves in a situation where one of the lifting groups for stably trivial $\op{Spin}$-torsors happens to be infinite, there will automatically be infinitely many isomorphism classes of stably trivial quadratic forms. 

\begin{remark}
For the other cases, the action of ${}_2\op{Pic}(X)$ on the lifting groups is not so easy to identify. It seems likely that the action $\widetilde{\op{CH}}^2(X)$ can be described as follows: there is an exact sequence 
\[
\op{CH}^2(X)\to\widetilde{\op{CH}}^2(X)\to\op{H}^2_{\op{Nis}}(X,\mathbf{I}^2)\to 0.
\]
Now the action of an element of ${}_2\op{Pic}(X)$ on $\op{CH}^2(X)$ is given as in Proposition~\ref{prop:nso6}, and the action on $\op{H}^2_{\op{Nis}}(X,\mathbf{I}^2)$ should be given by addition with the image of the class in ${}_2\op{Pic}(X)$ under the boundary map ${}_2\op{Pic}(X)\subset \op{CH}^1(X)\to \op{H}^2_{\op{Nis}}(X,\mathbf{I}^2)$. At the moment, I cannot make this more precise, but probably the Hartshorne--Serre correspondence for rank $2$ vector bundles allows to identify exactly the action of line bundle twists on oriented vector bundles. %\footnote{Essentially, the Bockstein is computed by taking a quadratic form lifting the multiplicity of a component of the divisor and taking the residue of the quadratic form. This should compute the oriented self-intersection of the divisor associated to the line bundle, viewed as a generator of the normal bundle on the codimension 2 self-intersection. In the Hartshorne--Serre correspondence, this would imply that the action is indeed as described. Note also that composing the Bockstein with the reduction to $\mathbf{K}^{\op{M}}_2$ yields the Steenrod square which in this case is really the mod 2 self-intersection, compatible with the claims.}   
Anyway, it is not clear at this point if the Example~\ref{ex:mkp2} is actually an example of a non-trivial quadratic form (or just an example of an interesting spin torsor) since the lifting classes detecting non-triviality of the spin torsor are $2$-torsion and the Picard group has a non-trivial $2$-torsion element. 

Similarly, the action of twisting by $2$-torsion line bundles on the cohomology groups $\op{H}^3_{\op{Nis}}(X,\bm{\pi}^{\mathbb{A}^1}_3({\op{B}}_{\op{Nis}}\op{Spin}(n)))$ needs to be made explicit to get more detailed results on the classification of rationally hyperbolic quadratic forms. 
\end{remark}

In any case, the combination of the results in Section~\ref{sec:dim3} and the computation in Proposition~\ref{prop:nso6} imply the following general result: 

\begin{proposition}
Let $k$ be an infinite perfect field of characteristic $\neq 2$, and let $X=\op{Spec}A$ be a smooth affine scheme over $k$ of dimension $\leq 3$. A rationally hyperbolic quadratic form over $A$ is stably trivial if and only if the image of its second Chern class in $\op{CH}^2(X)/{}_2\op{Pic}(X)$ is in the orbit of the hyperbolic form, where the action of line bundle class $\ell$ is given by $x\mapsto x+6\ell^2$. 
\end{proposition}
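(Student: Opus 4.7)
The plan is to combine the classification of stably trivial spin torsors obtained in Section~\ref{sec:dim3} with the description of the action of $\op{H}^1_{\et}(X,\mu_2)$ on second Chern classes given in Proposition~\ref{prop:nso6}, after first ensuring that every stably trivial quadratic form actually admits a spin lift so that the spin-torsor classification is applicable.

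First I would show that if $(\mathscr{P},\phi)$ is stably trivial then it admits a lift of structure group to $\op{Spin}(n)$. The discussion preceding Section~\ref{sec:action} shows that the obstruction to such a lift lives in $\op{H}^1_{\op{Nis}}(X,\mathscr{H}^1_{\et}(\mu_2))$ and is a stable invariant (because the stabilization morphisms $\op{SO}(n)\to\op{SO}(n+1)$ induce isomorphisms on $\bm{\pi}_0^{\mathbb{A}^1}$ for $n\geq 3$); hence it vanishes for any stably trivial form. So the question reduces to deciding which orbits in $[X,\op{B}_{\op{Nis}}\op{Spin}(n)]/\op{H}^1_{\et}(X,\mu_2)$ are stably trivial.

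Next I would run through the stabilization analyses of Section~\ref{sec:dim3} in a uniform way. After sufficiently many hyperbolic plane additions we land in the stable range $n\geq 7$, where by the stabilization result of that section the only lifting class is the second Chern class in $\op{CH}^2(X)\cong\op{H}^2_{\op{Nis}}(X,\mathbf{K}^{\op{M}}_2)$, and it is a complete invariant. Working back down the stabilization sequence $\op{Spin}(3)\to\op{Spin}(4)\to\op{Spin}(5)\to\op{Spin}(6)\to\op{Spin}(7)$ using Propositions~\ref{prop:stab345} and \ref{prop:pi3so6}, every lifting invariant below the stable range either maps to the second Chern class after stabilization or dies. Thus a $\op{Spin}(n)$-torsor is stably trivial if and only if its second Chern class (computed after stabilizing to $\op{Spin}(7)$ via Proposition~\ref{prop:classifyso6}) vanishes in $\op{CH}^2(X)$.

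Combining with the orbit description, a rationally hyperbolic quadratic form admitting a spin lift is stably trivial if and only if its stable second Chern class vanishes \emph{modulo the ${}_2\op{Pic}(X)$-action on $\op{CH}^2(X)$}. By Proposition~\ref{prop:nso6}, this action is precisely $x\mapsto x+6\ell^2$ (the $3\op{c}_1\ell$ term vanishing because we are dealing with oriented rank $4$ bundles after stabilization). This gives the stated criterion, with the hyperbolic orbit being that of $0$ in $\op{CH}^2(X)/{}_2\op{Pic}(X)$.

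The main obstacle, and the reason for the hedging language about spin lifts and $6\ell^2$, is the interaction of two things that are individually routine: verifying that the stabilization morphism $\op{Spin}(n)\to\op{Spin}(7)$ really does kill every unstable invariant uniformly in $n\in\{3,4,5,6\}$, and confirming that the orbit-of-the-hyperbolic-form condition for spin torsors transfers cleanly to the quotient classifying forms. The first is essentially bookkeeping already done in Section~\ref{sec:dim3}; the second relies on the fact that the ${}_2\op{Pic}(X)$-action commutes with the stabilization maps, which is built into the description of the action as twisting by line bundles in Section~\ref{sec:action}.
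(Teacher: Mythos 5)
Your proposal is correct and follows essentially the same route as the paper, which simply asserts this proposition as a consequence of the results in Section~\ref{sec:dim3} and Proposition~\ref{prop:nso6}: first dispose of the spin-lifting issue (stably trivial forms always lift because the obstruction in $\op{H}^1_{\op{Nis}}(X,\mathscr{H}^1_{\et}(\mu_2))$ is a stable invariant), then identify ``stably trivial spin torsor'' with ``stable $\op{c}_2=0$'' via the stabilization analyses, and finally pass to $\op{SO}(n)$-torsors by quotienting by the ${}_2\op{Pic}(X)$-action $x\mapsto x+6\ell^2$ computed in Proposition~\ref{prop:nso6}. The one point worth making fully explicit (and which you gesture at) is that compatibility of the ${}_2\op{Pic}(X)$-action with stabilization ultimately comes from the compatibility of the $\mu_2$-covers $\op{Spin}(n)\to\op{SO}(n)$ with the stabilization morphisms, which in turn justifies reading off the action on the stable $\op{c}_2$ from the rank-$6$ formula; your phrasing credits this to the twist-by-line-bundle description in Section~\ref{sec:action}, which is where the paper verifies it concretely, so the attribution is acceptable.
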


It seems that the action of ${}_2\op{Pic}(X)$ would always be an additive action by $2$-torsion elements on the Nisnevich cohomology groups. If true, this would generally imply that spin torsors corresponding to lifting classes which are not $2$-torsion always give non-trivial rationally hyperbolic quadratic forms. 

There is one more generic class of stably trivial quadratic forms in rank $4$. 

\begin{proposition}
\label{prop:genso4}
Let $k$ be an infinite perfect field of characteristic $\neq 2$, and let $X=\op{Spec}A$ be a smooth affine scheme over $k$ of dimension $\leq 3$. Let $\alpha\in\widetilde{\op{CH}}^2(X)$ be a class. Then the lifting class $(\alpha,-\alpha)\in \op{H}^2_{\op{Nis}}(X,\bm{\pi}^{\mathbb{A}^1}_2{\op{B}}\op{Spin}(4))$ induces a non-trivial stably trivial form if $\alpha$ is non-trivial in $\op{CH}^2(X)/{}_2\op{Pic}(X)$. 
\end{proposition}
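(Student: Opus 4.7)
The plan is to verify stable triviality and non-triviality as a quadratic form separately.

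For stable triviality, the strategy is to trace the lifting class $(\alpha, -\alpha)$ through the stabilization maps and appeal to the stable range results from the start of Section~\ref{sec:dim3}. By Proposition~\ref{prop:stab345}(1), the stabilization $\bm{\pi}^{\mathbb{A}^1}_2\op{B}\op{Spin}(4) \to \bm{\pi}^{\mathbb{A}^1}_2\op{B}\op{Spin}(5)$ is the sum map $\mathbf{K}^{\op{MW}}_2 \times \mathbf{K}^{\op{MW}}_2 \to \mathbf{K}^{\op{MW}}_2$, hence $(\alpha, -\alpha) \mapsto 0$. Composing with the natural projection $\mathbf{K}^{\op{MW}}_2 \to \mathbf{K}^{\op{M}}_2$ induced by further stabilization to $\op{Spin}(n)$, $n \geq 7$, the image in $\op{H}^2_{\op{Nis}}(X, \mathbf{K}^{\op{M}}_2) \cong \op{CH}^2(X)$ remains zero. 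Since $\op{H}^3_{\op{Nis}}(X, \mathbf{K}^{\op{ind}}_3) = 0$, the second Chern class is the only stable invariant for $\op{Spin}(n)$-torsors on a smooth affine $3$-fold, so the stabilized torsor is trivial, i.e., the form is stably trivial.

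For the non-triviality assertion, the strategy is to exhibit an invariant of the $\op{SO}(4)$-torsor that obstructs triviality. By Section~\ref{sec:action}, the $\op{SO}(4)$-torsor is trivial iff the lifting $\op{Spin}(4)$-torsor lies in the orbit of the trivial torsor under the action of $\op{H}^1_{\et}(X, \mu_2) \cong {}_2\op{Pic}(X)$. Under the sporadic isomorphism $\op{Spin}(4) \cong \op{SL}_2 \times \op{SL}_2$, this action twists the pair $(T_1, T_2)$ of rank-$2$ oriented bundles simultaneously: $(T_1, T_2) \mapsto (T_1 \otimes L, T_2 \otimes L)$. Projecting lifting classes along $\widetilde{\op{CH}}^2(X) \to \op{CH}^2(X)$, the Whitney formula $c_2(E \otimes L) = c_2(E) + c_1(L)^2$ for oriented rank-$2$ $E$ shows the action on $\op{CH}^2(X) \times \op{CH}^2(X)$ is diagonal addition of $L^2$, so the orbit of $(0, 0)$ is $\{(L^2, L^2) : L \in {}_2\op{Pic}(X)\}$. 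If the form were trivial, the projected class $(\alpha, -\alpha)$ would lie in this orbit, forcing $\alpha = L^2$ in $\op{CH}^2(X)$ for some $L \in {}_2\op{Pic}(X)$, contradicting the hypothesis that $\alpha$ is non-trivial in $\op{CH}^2(X)/{}_2\op{Pic}(X)$.

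The main obstacle is essentially bookkeeping. The action of ${}_2\op{Pic}(X)$ on $\widetilde{\op{CH}}^2(X)$ is delicate in general (cf.\ the remark following Proposition~\ref{prop:nso6}), but for the non-triviality conclusion, projecting to $\op{CH}^2(X)$ suffices, where the action takes the clean additive form $\alpha \mapsto \alpha + L^2$. Verifying that this is a well-defined group action of ${}_2\op{Pic}(X)$ reduces to $2 c_1(L_1) c_1(L_2) = 0$ in $\op{CH}^2(X)$ for $L_1, L_2 \in {}_2\op{Pic}(X)$, which is immediate from $2c_1(L_i) = 0$ in $\op{Pic}(X)$.
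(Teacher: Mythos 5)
Your proof is correct and matches the intent of the paper, which states this proposition without an explicit proof as a corollary of the preceding discussion (the rank $4$ classification via $\op{Spin}(4)\cong\op{SL}_2\times\op{SL}_2$, the stabilization computations in Proposition~\ref{prop:stab345}, and the identification of the ${}_2\op{Pic}(X)$-action in Section~\ref{sec:action}). You fill in exactly the details the paper leaves implicit. The one place worth highlighting as a genuine virtue of your write-up: you deliberately avoid the subtle question of how ${}_2\op{Pic}(X)$ acts on $\widetilde{\op{CH}}^2(X)$ (which the paper's remark after Proposition~\ref{prop:nso6} flags as not fully understood) by projecting to $\op{CH}^2(X)\times\op{CH}^2(X)$, where the Whitney formula $\op{c}_2(E\otimes L)=\op{c}_2(E)+\op{c}_1(L)^2$ for oriented rank $2$ bundles pins down the action unambiguously. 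This projection is legitimate since the Euler class in $\widetilde{\op{CH}}^2$ refines $\op{c}_2$, so the twist action descends along $\widetilde{\op{CH}}^2\to\op{CH}^2$. The only cosmetic point: in the final step you conclude $\alpha=L^2$ in $\op{CH}^2(X)$; this uses $-L^2=L^2$ (because $2L^2=(2L)\cdot L=0$), which is worth spelling out but is immediate.
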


The quadratic bundles in the proposition are stably hyperbolic, because every bundle of rank $6$ is hyperbolic as discussed in Remark~\ref{rem:so6hyp}. Using the identification of the hyperbolic morphism in rank $4$, cf. Proposition~\ref{prop:hypso4}, we can actually say something about when these bundles are hyperbolic. 

\begin{proposition}
Let $k$ be an infinite perfect field of characteristic $\neq 2$, and let $X=\op{Spec}A$ be a smooth affine scheme over $k$ of dimension $\leq 3$. Under the bijection of Proposition~\ref{prop:classifyso4} and the explicit identification of the hyperbolic morphism in Proposition~\ref{prop:hypso4}, a rank $4$ spin torsor in $\op{H}^1_{\op{Nis}}(X,\op{Spin}(4))\cong\op{H}^1_{\op{Nis}}(X,\op{SL}_2)\times\op{H}^1_{\op{Nis}}(X,\op{SL}_2)$ is hyperbolic if and only if the first component in the product decomposition is trivial. 
\end{proposition}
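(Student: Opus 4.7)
The plan is to prove the two directions by unraveling the identifications of Propositions~\ref{prop:classifyso4} and~\ref{prop:hypso4}. The forward direction is essentially formal; the converse needs some care because of the $\op{H}^1_{\et}(X, \mu_2)$-ambiguity in spin lifts.

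For the ``if'' direction, I would observe that a spin torsor $(V_1, V_2) \in \op{H}^1_{\op{Nis}}(X, \op{SL}_2)^2$ with $V_1$ trivial is, under the product identification $\op{Spin}(4) \cong \op{SL}_2 \times \op{SL}_2$, the image $\iota_2(V_2)$ of the inclusion into the second factor. Proposition~\ref{prop:hypso4} identifies the composite $\op{SL}_2 \xrightarrow{\iota_2} \op{SL}_2 \times \op{SL}_2 \to \op{SO}(4)$ as the hyperbolic morphism, so the underlying $\op{SO}(4)$-torsor is the hyperbolic form $\mathbb{H}(V_2)$, and the spin torsor is hyperbolic as required.

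For the converse, suppose $(V_1, V_2)$ projects to a hyperbolic $\op{SO}(4)$-torsor $\mathbb{H}(\mathscr{P})$. Then by Proposition~\ref{prop:hypso4} the torsor $\iota_2(\mathscr{P}) = (1, \mathscr{P})$ is a distinguished spin lift of $\mathbb{H}(\mathscr{P})$, with trivial first component. By the discussion of Section~\ref{sec:action}, any two spin lifts of the same $\op{SO}(4)$-torsor differ by the action of $\op{H}^1_{\et}(X, \mu_2)$, which factors through ${}_2\op{Pic}(X)$ acting by simultaneous twist of both $\op{SL}_2$-factors. Therefore $(V_1, V_2) = (L^{\oplus 2}, \mathscr{P} \otimes L)$ for some $2$-torsion line bundle $L$, and the proposition is to be read as singling out the canonical spin lift $L = \mathcal{O}_X$, whose first component is trivial.

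The hard part is the converse: for non-trivial $2$-torsion $L$ the ${}_2\op{Pic}(X)$-orbit of $(1, \mathscr{P})$ does contain representatives $(L^{\oplus 2}, \mathscr{P} \otimes L)$ with non-trivial first component. The proposition should therefore be interpreted via the canonical hyperbolic spin representative $\iota_2(\mathscr{P})$ provided by Proposition~\ref{prop:hypso4}, and the main bookkeeping issue is to track how the ${}_2\op{Pic}(X)$-action interacts with the product decomposition to identify this preferred representative.
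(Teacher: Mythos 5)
The main problem is a misreading of what ``hyperbolic'' means for a \emph{spin torsor}. In the paper, a rank $4$ spin torsor is called hyperbolic if it admits a reduction of structure group along the hyperbolic lift $\op{SL}_2\xrightarrow{\iota_2}\op{SL}_2\times\op{SL}_2\cong\op{Spin}(4)$, i.e., if it lies in the image of $\iota_{2*}\colon\op{H}^1_{\op{Nis}}(X,\op{SL}_2)\to\op{H}^1_{\op{Nis}}(X,\op{Spin}(4))$. This is exactly the usage in Example~\ref{ex:nonhyp}, where ``has no reduction of structure along the hyperbolic morphism $\op{SL}_2\to\op{Spin}(4)$'' is the negation of ``hyperbolic.'' Under that reading the proposition is immediate in both directions: since $\iota_2$ is the inclusion of the second factor, $\iota_{2*}$ sends an $\op{SL}_2$-torsor $V$ to the pair $(\mathrm{trivial},V)$, so the image of $\iota_{2*}$ is precisely the set of pairs with trivial first component, with no further bookkeeping required. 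Your ``if'' direction is fine, but your ``only if'' direction is addressing a different claim than the one actually made.

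Your alternative interpretation, ``the image $\op{SO}(4)$-torsor is hyperbolic,'' is not what the proposition asserts, and your own analysis shows why it would be false in general: a hyperbolic $\op{SO}(4)$-torsor $\mathbb{H}(\mathscr{P})$ has the preferred spin lift $(1,\mathscr{P})$ by Proposition~\ref{prop:hypso4}, and the $\op{H}^1_{\et}(X,\mu_2)$-action gives the other lifts $(L^{\oplus 2},\mathscr{P}\otimes L)$, whose first component is nontrivial for nontrivial $2$-torsion $L$. So the ``hard part'' you flag is not a gap in the proposition; it is a symptom of having proven the wrong statement. That said, the observation itself is accurate and is precisely the issue the paper contends with one step later: to pass from non-hyperbolicity of a \emph{spin torsor} to non-hyperbolicity of the underlying \emph{quadratic form} in Example~\ref{ex:nonhyp}, the paper explicitly imposes the hypothesis $\op{Pic}(X)=0$, which kills the ${}_2\op{Pic}(X)$-ambiguity you identified.
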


This implies the existence of many stably hyperbolic but non-hyperbolic quadratic forms of rank $4$ over schemes of dimension $\leq 3$. 

\begin{example}
\label{ex:nonhyp}
We can take one of the $\op{SL}_2$-torsors discussed in Examples~\ref{ex:so3:2} or \ref{ex:so3:3}. Let $\alpha\in\op{H}^1(X,\op{SL}_2)$ be one such torsor, and let $\beta\in\op{H}^1(X,\op{SL}_2)$ be any other torsor. Then the torsor corresponding to the element 
\[
(\alpha,\beta)\in\op{H}^1(X,\op{SL}_2)\times\op{H}^1(X,\op{SL}_2)\cong\op{H}^1(X,\op{Spin}(4))
\]
will be a non-trivial stably trivial spin torsor, which has no reduction of structure along the hyperbolic morphism $\op{SL}_2\to\op{Spin}(4)$. Moreover, by the previous remark, the Picard group for the schemes in examples~\ref{ex:so3:2} or \ref{ex:so3:3} is trivial, implying that the classification of spin torsors and quadratic forms agree in this case. Consequently, the torsor described above corresponds to a stably trivial (hence stably hyperbolic) quadratic form which is not hyperbolic. 
\end{example}

\end{document}